\newtheorem{theo}{Theorem}
\newtheorem{cor}{Corollary}
\newtheorem{lem}{Lemma}
\newtheorem{df}{Definition}
\newtheorem{rem}{Remark}
\def \N{{\mathbb N}}
\def \Z{{\mathbb Z}}
\def \Q{{\mathbb Q}}
\def \F{{\mathbb F}}
\def \P{{\mathbb P}}
\def \ss{\underset{ R}{\sim}}
\def \s{\underset{R}{\sim}^*}
\def \tt{\overset{\sim}{\longrightarrow}}
\renewcommand{\p@enumii}{}
\begin{document}

\title{On the Laxton Group
}


\author{Miho Aoki        \and
        Masanari Kida
}


\institute{ M. Aoki \at
 Department of Mathematics,
Interdisciplinary Faculty of Science and Engineering,
Shimane University,
Matsue, Shimane, 690-8504, Japan \\
              \email{aoki@riko.shimane-u.ac.jp}           
           \and
 M. Kida  \at
 Department of Mathematics, 
Faculty of Science Division I, 
Tokyo University of Science, 
1-3 Kagurazaka Shinjuku, Tokyo, 162-8601, Japan \\
               \email{} 
}

\date{Received: date / Accepted: date}

\maketitle

\begin{abstract}
In 1969, Laxton defined a multiplicative group structure on the set of
 rational sequences
 satisfying a fixed linear recurrence of degree two. 
He also defined some natural subgroups of the group, and determined the
structures of their quotient groups.
Nothing has been known about the structure of Laxton's whole group
and its interpretation.
In this paper, we redefine his group
 in a natural way and determine the structure of the whole group,
 which  clarifies Laxton's results on the quotient groups.
This definition makes us possible to use
 the group to show various properties of such sequences.
\keywords{Laxton Groups \and Linear recurrence sequences  \and Quadratic fields}
\subclass{11B37 \and 11B39 \and 11R11}
\end{abstract}
\section{The Laxton group}\label{sec:LG}
Let $P$ and $Q$ are 
integers with $Q\ne 0$.

We consider linear recurrence sequences associated to the characteristic
 polynomial $f(t):=t^2-Pt+Q$. Namely, they are determined by
 $w_{n+2}-Pw_{n+1}+Qw_n=0$ with the  first rational numbers $w_0$ and $w_1$.
 Let $\theta_1$ and $\theta_2$ be the roots of $f(t)$.
 We assume 
 \[
D:=\mathrm{disc}(f)=P^2-4Q=(\theta_1-\theta_2)^2\ne 0.
\]

 We define an equivalence relation $\sim^*$ on the set of the linear recurrence sequences.
 For ${\bf w}=(w_n),\ {\bf v}=(v_n)$, we write ${\bf w} \sim^* {\bf v}$ if there 
exists $\lambda \in \Q^{\times}$
and $\nu \in \Z$ 
such that $w_n=\lambda v_{n+\nu}$ for any $n\in \Z$.
Laxton \cite{L1} considered the following quotient set using this relation:
\begin{align*}
G^*(f) :=& \{ (w_n)_{n\in \Z} \ |\ w_0,w_1 \in \Q \ \text{with}\ \Lambda (w_1,w_0)\ne 0,\ 
w_{n+2}-Pw_{n+1}+Qw_n=0 \\ 
& \text{for any}\ n\in \Z, 
\  \text{and there exists $\nu\in \Z$ such that $w_k \in \Z$ for any $k\geq \nu$ } \}/\sim^*,
\end{align*}
where $\Lambda (w_1,w_0):=w_1^2-Pw_0w_1+Qw_0^2$.
%
Laxton
introduced a product on $G^*(f)$ as follows. For classes ${\mathcal W}$ and ${\mathcal V} \in G^*(f)$,
let  $(w_n)$ and $(v_n)$ are representatives of the classes ${\mathcal W}$ and ${\mathcal V}$, respectively.
The product ${\mathcal W} \times {\mathcal V}$ of ${\mathcal W}$ and ${\mathcal V}$ is, then, defined by the class of $(u_n)$ with $u_n=(AC \theta_1^n-BD\theta_2^n)/
(\theta_1-\theta_2)$, where $A=w_1-w_0\theta_2,\ B=w_1-w_0\theta_1, \ C=v_1-v_0 \theta_2$ and $D=v_1-v_0 \theta_1$.
We call $G^*(f)$ the Laxton group.
Let $p$ be a prime number with $p\nmid Q$.
For ${\bf w}=(w_n)$, if $w_n\in p\Z_{(p)}$ for some $n$, we say
$p$ is a divisor of ${\bf w}$ and write $p|{\bf w}$,
where $\Z_{(p)}$ is the localization of the integers at $p$.
Laxton defined the following subgroups of $G^*(f)$:
\begin{align*} 
G^*(f,p)_L := & \{ {\mathscr W} \in G^*(f) \ |\  p |{\bf w} \ \text{for\ all}\  {\bf w}\in {\mathscr W} \},\\
K^*(f,p)_L := & \{ {\mathscr W} \in G^*(f) \ |\  \text{ there exists ${\bf w} \in {\mathscr W} $
for\ which 
$w_0,w_1 \in \mathbb Z$ and $p\nmid \Lambda (w_1,w_0)$}\}, 
\\
H^*(f,p)_L := & \{ {\mathscr W} \in G^*(f) \ |\  {\mathscr W}^n \in G^*(f,p)_L \ \text{for\ some}\ n \in \Z \}.
\end{align*}
Laxton proved the following theorem on the quotient groups.
\begin{theo}[\protect{Laxton \cite[Theorem~3.7]{L1}}]\label{theo:Laxton}

 Let $p$ be a prime number with $p\nmid Q$, and $r(p)$ be the rank of the Lucas sequence ${\mathcal F}$
$($see Definitions~\ref{df:Lucas} and ~\ref{df:rank}$)$.
\begin{enumerate}[$(1)$]
\item  Assume $p\nmid D$. If $\mathbb Q(\theta_1)\ne \mathbb Q$ and $p$ is inert in $\mathbb Q(\theta_1)$,
 then $G^*(f)=H^*(f,p)_L=K^*(f,p)_L$ and
$G^*(f)/G^*(f,p)_L$ is a cyclic group of order $(p+1)/r(p)$.
\label{enum:Laxton1}
\item Assume $p\nmid D$. If $\Q (\theta_1)=\Q$,  or  $\Q(\theta_1)\ne \Q$ and
$p$ splits in $\Q(\theta_1)$, then $G^*(f)/H^*(f,p)_L$ is an infinite cyclic group,
and $H^*(f,p)_L=K^*(f,p)_L$ and $H^*(f,p)_L/G^*(f,p)_L$ is a cyclic group of order $(p-1)/r(p)$.
\label{enum:Laxton2}
\item If  $p|D$ and $p^2\nmid D$, then $G^*(f)=H^*(f,p)_L$ and $K^*(f,p)_L=G^*(f,p)_L$.
Furthermore, if $p\ne 2$, then $G^*(f)/G^*(f,p)_L$ is a cyclic group of order two.
\label{enum:Laxton3}
\end{enumerate}
\end{theo}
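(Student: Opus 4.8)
The plan is to deduce all three parts at once from an explicit description of $G^*(f)$ together with a uniform realisation of the three subgroups as kernels of natural homomorphisms. First I would pass to Binet coordinates: writing $A=w_1-w_0\theta_2$, $B=w_1-w_0\theta_1$ as in the definition of the product, a sequence becomes the pair $(A,B)$, the product becomes componentwise multiplication, $\sim^*$ becomes $(A,B)\sim(\lambda\theta_1^{\,\nu}A,\lambda\theta_2^{\,\nu}B)$ for $\lambda\in\Q^\times,\nu\in\Z$, one has $\Lambda(w_1,w_0)=AB$, and the Lucas sequence $\mathcal F$ becomes the identity $(1,1)$. When $K:=\Q(\theta_1)\ne\Q$ the pair is $(A,A^{\sigma})$, so a class is recorded by one element of $K^\times$; when $K=\Q$ it is a pair in $\Q^\times\times\Q^\times$, hence up to the diagonal an element $A/B\in\Q^\times$. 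The remaining constraint is eventual integrality of the sequence; since $\theta_1,\theta_2,\theta_1-\theta_2$ are $\ell$-units for every $\ell\nmid QD$, this reduces to explicit conditions on $v_\ell(A),v_\ell(B)$ at the finitely many $\ell\mid QD$, and carrying this through gives the structure of $G^*(f)$. \emph{This is the redefinition step and the technical heart of the argument}: the conditions at $\ell\mid Q$ (where one of $\theta_1,\theta_2$ is not a unit) and at $\ell\mid D$ (where $\theta_1\equiv\theta_2$) are asymmetric and must be handled separately, and it is precisely this bookkeeping that later forces the quotients to come out with exactly the shapes asserted, not merely the asserted cardinalities.

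Next, fix $p\nmid Q$ and use the valuations above $p$. If $\mathfrak p\mid p$ then $v_{\mathfrak p}$ kills the class of $\theta_1$ (a $\mathfrak p$-unit) and maps $\Q^\times$ onto $e(\mathfrak p/p)\Z$, so after normalisation it descends to a homomorphism out of $G^*(f)$. When $p$ is inert and $p\nmid D$ this homomorphism is trivial. When $K=\Q$ or $p$ splits (and $p\nmid D$) it is $[A]\mapsto v_{\mathfrak p}(A)-v_{\bar{\mathfrak p}}(A)\in\Z$, and I would identify its kernel with both $H^*(f,p)_L$ and $K^*(f,p)_L$: the kernel consists of the classes whose $p$-primitive integral representative has $w_0,w_1\in\Z$ and $p\nmid\Lambda=AB$, which gives $H^*=K^*$; it contains $G^*(f,p)_L$ with finite quotient (computed in the next step), hence equals the radical $H^*(f,p)_L$; and its image being $\Z$, the quotient $G^*(f)/H^*(f,p)_L$ is infinite cyclic. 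When $p\mid D$, $p^2\nmid D$, $p\ne2$ (so $p$ ramifies) the homomorphism is $[A]\mapsto v_{\mathfrak p}(A)\bmod2$, with image $\Z/2$; here I would show its kernel is $G^*(f,p)_L$ itself ($=K^*(f,p)_L$), so that $G^*(f)/G^*(f,p)_L\cong\Z/2$, while $H^*(f,p)_L=G^*(f)$ since every square lies in the kernel. Finally, in the inert case $G^*(f)=K^*(f,p)_L=H^*(f,p)_L$: every class has a $\mathfrak p$-unit representative (scalable to an integer pair without moving $v_p$, whence $p\nmid\Lambda$ automatically), and $G^*(f)/G^*(f,p)_L$ is finite.

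On the kernel of the valuation map I would then reduce modulo $\mathfrak p$: in the inert case $G^*(f)\to\F_{p^2}^\times/\bigl(\F_p^\times\langle\widetilde{\theta_1}\rangle\bigr)$, $[A]\mapsto[\widetilde A]$; in the split and $K=\Q$ cases $[x]\mapsto[\widetilde x]\in\F_p^\times/\langle\widetilde{\theta_1/\theta_2}\rangle$. Since $p$ divides the primitive representative of $\mathbf w$ iff $w_n\equiv0\pmod{\mathfrak p}$ for some $n$, iff $\widetilde{B/A}\in\langle\widetilde{\theta_1/\theta_2}\rangle$ (respectively $\widetilde{A^{\sigma}/A}\in\langle\widetilde{\theta_1/\theta_2}\rangle$), this composite has kernel exactly $G^*(f,p)_L$. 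Its codomain is cyclic, being a quotient of a cyclic group, and its order is $(p+1)/r(p)$, respectively $(p-1)/r(p)$, because the decisive quantity is the order of $\widetilde{\theta_1/\theta_2}$ in the relevant residue group — and applying the very same criterion to the identity class $\mathcal F=(F_n)$ gives $p\mid F_n\iff(\widetilde{\theta_1/\theta_2})^{\,n}=1$ (inert: $\iff(\widetilde{\theta_1})^{\,n}\in\F_p^\times$), so that order is precisely the rank $r(p)$. Combining this with the previous paragraph yields parts (1) and (2).

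The main obstacle, besides the integrality bookkeeping already flagged, is part (3) in the ramified case: there $\widetilde{\theta_1}=\widetilde{\theta_2}$, so the reduction argument of the third paragraph has to be run modulo $\mathfrak p^{3}$ rather than $\mathfrak p$, and the finite quotient that appears — a group of order dividing $2$ — must be pinned down by a direct $p$-adic computation, in which the value $r(p)=p$ for ramified $p$ enters. It is exactly this computation that degenerates at $p=2$, which is why that prime is excluded from the last assertion of part (3).
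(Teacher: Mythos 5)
Your proposal follows essentially the same route as the paper: realize Laxton's product as multiplication in $\Q(\theta_1)^\times$ (resp.\ in pairs over $\Q^\times$) modulo $\Q^\times\langle\theta_1\rangle$, compute $G^*(f)/K^*(f,p)_L$ by $\mathfrak{p}$-adic valuations (trivial, $\Z$, or $\Z/2\Z$ according as $p$ is inert, split/rational, or ramified), compute the finite quotient by $G^*(f,p)_L$ via reduction modulo $p$ with $r(p)$ entering as the order of the reduced $\theta_1\theta_2^{-1}$ (resp.\ $\theta_1$ modulo $\F_p^{\times}$), and recover $H^*(f,p)_L$ as the torsion part — which is exactly the paper's chain of results. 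One clarification: the ``integrality bookkeeping'' you single out as the technical heart is in fact vacuous, since $P,Q\in\Z$ imply that clearing the denominators of $w_0,w_1$ by a rational scalar makes the sequence integral from some index on, so $G^*(f)$ is the full group $\Q(\theta_1)^\times/\Q^\times\langle\theta_1\rangle$ with no local conditions at primes dividing $QD$.
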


Laxton made no assumption in $p^2 \nmid D$ of Theorem~\ref{theo:Laxton} (\ref{enum:Laxton3}).
Suwa \cite{S} recently pointed out that this assumption is necessary and
gave  counterexamples that did not hold in the case $p^2  \mid D$ of Theorem~\ref{theo:Laxton} (\ref{enum:Laxton3}),
and gave the correct formulation above and the proof for the case 
(Corollary~\ref{cor:main}(\ref{enum:cormain4})).
He also gives in his paper an interpretation of
linear recurrence sequences of degree two from a viewpoint of the theory of group schemes.
%

Althogh Laxton studied structures of the quotient groups of $G^*(f)$, he did not 
study the group $G^*(f)$ itself. The aims of this paper are to redefine Laxton's group in
a natural way (Theorems~\ref{theo:main1} and \ref{theo:Vf}) and to give  structure theorems of the group itself
and the subgroups.
One of our main results is the following theorem.
\begin{theo}[\protect{Theorems~\ref{theo:G*}, \ref{theo:main2} and \ref{theo:KK*}}]\label{theo:main}
Notations being as above.
 Put $D=p^sD_0$ with $s\geq 0,\ p\nmid D_0$. 
\begin{enumerate}[$(1)$]
\item  If $f(t)$ is irreducible over $\Q$, then we have
\[
G^*(f)\tt \Q(\theta_1)^{\times}/\Q^{\times}\langle \theta_1 \rangle.
\]

If $f(t)$ is reducible over $\Q$, then we have 
\[
G^*(f)\tt \Q^{\times} /
\langle \theta_1 \theta_2^{-1} \rangle.
\]
\label{enum:main1}
\item There exists the following exact sequence
\[
1 \longrightarrow G^*(f,p)_L \longrightarrow K^*(f,p)_L \underset{ \mathrm{red}_p}{\longrightarrow}  G^*_{\F_p}(f) \longrightarrow 1
\]
where $G^*_{\F_p}(f)$ is the group of equivalence classes of linear recurrence sequences over the finite field $\F_p$.
\label{enum:main2}
\item If $f(t)$ is irreducible over $\Q$, then we have
\[
K^*(f,p)_L \tt
\Z_{(p)}[\theta_1]^{\times}/\Z_{(p)}^{\times}\langle \theta_1 \rangle.
\]
If $f(t)$ is reducible over $\Q$, then we have
\[
K^*(f,p)_L \tt \begin{cases}
\Z_{(p)}^{\times}/\langle \theta_1 \theta_2^{-1} \rangle  & \text{if}\ p\nmid D,\\
(1+p^{\frac{s}{2}} \Z_{(p)} )/\langle \theta_1 \theta_2^{-1} \rangle & \text{if}\ p|D.
\end{cases} 
\]
\label{enum:main3}
\end{enumerate}
\end{theo}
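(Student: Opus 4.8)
The plan is to realize $G^*(f)$ explicitly as a quotient of a multiplicative group by constructing a natural map from linear recurrence sequences to elements of $\Q(\theta_1)^\times$ (or $\Q^\times \times \Q^\times$ in the reducible case), and then track how the equivalence relation $\sim^*$ and Laxton's product behave under this map. Given a sequence ${\bf w}=(w_n)$ with characteristic polynomial $f$, write $w_n = (A\theta_1^n - B\theta_2^n)/(\theta_1-\theta_2)$ where $A = w_1 - w_0\theta_2$ and $B = w_1 - w_0\theta_1$; the condition $\Lambda(w_1,w_0)\ne 0$ is exactly $AB\ne 0$. In the irreducible case, $\theta_2$ is the conjugate of $\theta_1$ over $\Q$, so $B = \bar A$ and the natural invariant to attach to ${\bf w}$ is $A \in \Q(\theta_1)^\times$ (well-defined up to the ambiguity we now analyze). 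In the reducible case $\theta_1,\theta_2 \in \Q$, and the invariant is the pair $(A,B)\in (\Q^\times)^2$.

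The key computational steps are: (i) Laxton's product sends the invariants $A,C$ (and $B,D$) to their products $AC$, $BD$ — this is visible directly from the defining formula $u_n = (AC\,\theta_1^n - BD\,\theta_2^n)/(\theta_1-\theta_2)$ — so the map ${\bf w}\mapsto A$ (resp. $(A,B)$) is multiplicative and $G^*(f)$ is genuinely a group with this as an isomorphism onto its image modulo the relation. (ii) Scaling $w_n \mapsto \lambda w_n$ multiplies $A$ by $\lambda \in \Q^\times$, and shifting $w_n \mapsto w_{n+1}$ multiplies $A$ by $\theta_1$ and $B$ by $\theta_2$; hence the relation $\sim^*$ corresponds exactly to quotienting by the subgroup generated by $\Q^\times$ and $\theta_1$ (irreducible case), i.e. by $\Q^\times\langle\theta_1\rangle$, and by $\langle(\lambda\theta_1^k,\lambda\theta_2^k)\rangle = \Q^\times_{\mathrm{diag}}\langle(\theta_1,\theta_2)\rangle$ in the reducible case, which after dividing the second coordinate by the first collapses to $\Q^\times/\langle\theta_1\theta_2^{-1}\rangle$. (iii) Surjectivity: every $\alpha\in\Q(\theta_1)^\times$ arises as the $A$-invariant of some sequence, and one must check the integrality condition ("there exists $\nu$ with $w_k\in\Z$ for $k\ge\nu$") can always be arranged within a $\sim^*$-class by clearing denominators via $\lambda$ and shifting — this uses $Q\ne 0$ so that the recurrence runs in both directions and the sequence has the right eventual shape. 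Part (1) then follows.

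For parts (2) and (3), I would localize the same picture at $p$. The reduction map $\mathrm{red}_p$ on $K^*(f,p)_L$ is induced by reducing a $p$-integral representative (with $p\nmid\Lambda(w_1,w_0)$, which is what the definition of $K^*$ guarantees) modulo $p$; the membership condition $p\nmid\Lambda(w_1,w_0)$ says precisely $AB\not\equiv 0$, i.e. the reduced invariant lies in $\F_p(\bar\theta_1)^\times$ (or $(\F_p^\times)^2$), so the reduction lands in $G^*_{\F_p}(f)$. Exactness on the left is the statement that $\ker(\mathrm{red}_p)$ consists of classes all of whose representatives are divisible by $p$, which is $G^*(f,p)_L$ by unwinding the definitions; surjectivity of $\mathrm{red}_p$ is a lifting statement for sequences over $\F_p$ to sequences over $\Z_{(p)}$, immediate since $f$ has $\Z$-coefficients. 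This gives (2). For (3), combining (1)-style arguments with $p$-localization identifies $K^*(f,p)_L$ with the classes whose invariant $A$ can be chosen in $\Z_{(p)}[\theta_1]^\times$; the congruence refinement when $p\mid D$ (so $\theta_1\equiv\theta_2$ and $\Z_{(p)}[\theta_1]$ is non-maximal) is where the $1+p^{s/2}\Z_{(p)}$ term comes from in the reducible case — here $A$ and $B$ are forced congruent mod the conductor $p^{s/2}$, so after dividing out, $AB^{-1}\in 1+p^{s/2}\Z_{(p)}$.

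The main obstacle I expect is step (iii) of part (1): verifying that the integrality/eventual-integer condition in the definition of $G^*(f)$ neither obstructs surjectivity nor introduces extra identifications — one has to show that for any target invariant $\alpha$ there is a genuine representative sequence meeting the "$w_k\in\Z$ for $k\ge\nu$" requirement, and that two sequences with the same invariant are $\sim^*$-equivalent through maps preserving this condition. The subtlety is that the condition is not scaling-invariant on the nose (multiplying by $\lambda\in\Q^\times$ can break $p$-integrality at other primes), so the argument must carefully use the freedom in both $\lambda$ and the shift $\nu$ simultaneously; the case $p\mid D$ in part (3), where the relevant order is non-maximal and the unit group computation is more delicate, is the secondary difficulty.
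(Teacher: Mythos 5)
Your proposal is correct and follows essentially the same route as the paper: the invariant ${\bf w}\mapsto w_1-w_0\theta_i$ (packaged in the paper as the isomorphism $\mathscr{S}(f,R)^{\times}\simeq (R[t]/(f(t)))^{\times}$), the identification of scaling with $\Q^{\times}$ and of the shift with multiplication by $\theta_1$ (resp.\ $\theta_1\theta_2^{-1}$ after taking the quotient of the two coordinates in the reducible case), and localization at $p$ to identify $K^*(f,p)_L$ with $G^*_{\Z_{(p)}}(f)$ and obtain the reduction exact sequence. The points you flag as delicate (disposing of the eventual-integrality condition by clearing denominators, and the conductor $p^{s/2}$ forcing $AB^{-1}\in 1+p^{s/2}\Z_{(p)}$ when $p\mid D$) are exactly the ones the paper handles, in Remark~\ref{rem:LG}, Theorem~\ref{theo:main2} and Theorem~\ref{theo:Vf}(\ref{enum:Vf2}).
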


The content of this paper is as follows.
In \S~\ref{sec:LR}, we begin with redefining the group law
on the set of linear recurrence sequences,
which gives a natural interpretation of Laxton's product.
In \S~\ref{sec:GS}, we determine the group structures of the group of the set of linear recurrence
sequences
according to the irreducibility of the  characteristic polynomial $f(t)$.
In \S~\ref{sec:EC}, we introduce two relations on the group of linear recurrence sequences,
and determine the group structures of the quotient groups by these
relations. In particular, we can determine that of the Laxton group $G^*(f)$.
In \S~\ref{sec:SubG}, we redefine the subgroups $G^*(f,p)_L,\ K^*(f,p)_L$ and
$H^*(f,p)_L$  for a fixed prime number $p$.
From our new definitions, we see that $G^*(f,p)_L$ is the kernel of the reduction map from $K^*(f,p)_L$
to the group $G^*_{\mathbb F_p}(f)$ of equivalence classes of linear recurrence
sequences of the finite filed $\F_p$. Furthermore, we study the relations between these subgroups and the
rank $r(p)$ of the Lucas sequence, which is a classical invariant concerning Artin's conjecture on primitive roots.
In \S~\ref{sec:K}, \ref{sec:G/Kp} and \ref{sec:Kp/Gp}, we determine the group structures of $K^*(f,p)_L,\
G^*(f)/K^*(f,p)_L$ and $K^*(f,p)/G^*(f,p)$, respectively by using $p$-adic logarithm functions.
The results in these sections yield Laxton's (Theorems~\ref{theo:Laxton}) and Suwa's theorems.
Suwa first gave a proof in the case $p^2 |D$ from a viewpoint of the theory of group schemes.

%
\section{Group laws of Linear recurrence sequences}\label{sec:LR}

Let $R$ be an integral domain. In order to discuss in the general situation, we consider the
sequences
$(w_n)$ of $R$ determined by 
\[
w_{n+2}-Pw_{n+1}+Qw_n=0,
\]
for fixed elements $P,Q \in R$ with $Q\in R^{\times}$. Let $\theta_1$ and $\theta_2$ be
the roots of the characteristic polynomial 
\[
f(t):=t^2-Pt+Q \ \in R[t]
\]
in an algebraic closure $\overline{k}_R$ of the quotinet field $k_R$ of $R$. Put
\[
d:=\mathrm{disc}(f)=P^2-4Q \ \in R.
\]
Define
\[
\mathscr{S}(f,R) := \{ (w_n)_{n\in \Z} \ \vline \ w_0,w_1 \in R,\ w_{n+2}-Pw_{n+1}+Qw_n=0\
\text{ for any }\ n\in \Z \}.
\]
By the assumption  $Q\in R^{\times}$,  any  sequence $(w_n) \in \mathscr{S}(f,R)$ satisfies  $w_n \in R$ for any $n\in \Z$.
All sequences in $\mathscr{S}(f,R)$ have the characteristic polynomial $f(t)$, and the set $\mathscr{S}(f,R)$
has a natural $R$-module strcture and is isomorphic to a free $R$-module of rank $2$ :
\[
V_f(R) := \left\{ \begin{pmatrix}
a_1\\
a_0\\
\end{pmatrix} \ \vline \ a_1,a_0\in R \right\}
\]
with an isomorphism given by 
\begin{equation}\label{eq:LR1}
\phi_R : \ \mathscr{S}(f,R) \tt V_f(R), \quad (w_n) \mapsto \begin{pmatrix}
w_1\\
w_0\\
\end{pmatrix}.
\end{equation}
Furthermore, we define an isomorphism of $R$-modules $\varphi_R$ by
\begin{equation}\label{eq:LR2}
\varphi_R : \ V_f(R) \tt {\mathcal O}_R, \quad \begin{pmatrix}
a_1\\
a_0\\
\end{pmatrix}  \mapsto a_1-a_0t \bmod{f(t)},
\end{equation}
where ${\mathcal O}_R:=R[t]/(f(t))$ is the quotient ring of $R[t]$.
We define ring structures on $V_f(R)$ and $\mathscr{S}(f,R) $
via the maps $\phi_R$ and $\varphi_R$ induced from that of ${\mathcal O}_R$. 
Put
\[
{\mathcal B}:=\begin{pmatrix}
P & -Q\\
1 & 0\\
\end{pmatrix}.
\]
Since det$({\mathcal B})=Q\in R^{\times}$, we have ${\mathcal B}\in \mathrm{GL}_2(R)$.
There is a natural left action of the group $\langle {\mathcal B} \rangle $ on 
$V_f(R)$ because the matrix ${\mathcal B}$ satisfies 
\begin{equation}\label{eq:LR3}
\begin{pmatrix}
w_{n+1} \\
w_n \\
\end{pmatrix}={\mathcal B}^n \begin{pmatrix}
w_1\\
w_0\\
\end{pmatrix},
\end{equation}
for any $n \in \mathbb Z$. We define left actions of $\langle {\mathcal B} \rangle$ on
$\mathscr{S}(f,R)$ and ${\mathcal O}_R$ via the maps $\phi_R$ and $\varphi_R$,
respectively.
From 
\begin{equation}\label{eq:LR4}
\varphi_R \left( {\mathcal B} \begin{pmatrix}
a_1\\
a_0\\
\end{pmatrix} \right) =(Pa_1-Qa_0)-a_1t =(P-t)(a_1-a_0t),
\end{equation}
for any $\begin{pmatrix}
a_1\\
a_0\\
\end{pmatrix} \in V_f(R)$, the action of ${\mathcal B}$ on ${\mathcal O}_R$ is given by
the multiplication by $P-t$. In particular, we get
\begin{equation}\label{eq:LR5}
{\mathcal B} \begin{pmatrix}
a_1\\
a_0\\
\end{pmatrix} *\begin{pmatrix}
b_1\\
b_0\\
\end{pmatrix} =\begin{pmatrix} 
a_1\\
a_0\\
\end{pmatrix} *{\mathcal B} \begin{pmatrix}
b_1\\
b_0\\
\end{pmatrix} ={\mathcal B} \left\{ 
\begin{pmatrix} 
a_1\\
a_0\\
\end{pmatrix} *\begin{pmatrix}
b_1\\
b_0\\
\end{pmatrix} \right \},
\end{equation}
for any $\begin{pmatrix}
a_1\\
a_0\\
\end{pmatrix}, \begin{pmatrix}
b_1\\
b_0\\
\end{pmatrix} \in V_f(R)$.
Furthermore, we get
\begin{align}
 & {\mathcal B}^n \begin{pmatrix}
a_1\\
a_0\\
\end{pmatrix} =\begin{pmatrix}
P\\
1\\
\end{pmatrix}^n *\begin{pmatrix}
a_1\\
a_0\\
\end{pmatrix}, \label{eq:LR6}
\end{align}
for any $n\in \mathbb Z$.
For a class of $a_1-a_0t$ in ${\mathcal O}_R$,
we have
\begin{equation}\label{eq:LR7}
(a_1-a_0t) (1\ \ -t) =(1\ \ -t)\begin{pmatrix}
a_1 & -a_0 Q\\
a_0 & a_1-a_0P \\
\end{pmatrix}.
\end{equation}
%
 Define the norm of a class of $a_1-a_0t \in {\mathcal O}_R$ by 
\begin{align}
N(a_1-a_0t) & :=\mathrm{det} \begin{pmatrix}
a_1 & -a_0 Q\\
a_0 &a_1-a_0P \\
\end{pmatrix} \quad (\in R) \label{eq:LR8} \\
&=a_1^2-Pa_0a_1+Qa_0^2 \notag \\
&=(a_1-\theta_1 a_0)(a_1-\theta_2 a_0) \notag 
\end{align}
\begin{rem}\label{rem:lambda}
 If $f(t)$ is irreducible over the quotient field $k_R$ of $R$, then $N(a_1-a_0t)\ne 0$ if and only if
$a_0\ne 0$ or $a_1\ne 0$.
\end{rem}
By the definition of the norm and (\ref{eq:LR7}), the norm is multiplicative, namely we have
\[
N((a_1-a_0t)(b_1-b_0t))=N(a_1-a_0t)N(b_1-b_0t),
\]
for any classes of $a_1-a_0t,\ b_1-b_0 t$ of ${\mathcal O}_R$.
From the fact, we can see that a class of $a_1-a_0t$ is invertible in ${\mathcal O}_R$ if and only if
$N(a_1-a_0t)\in R^{\times}$.
In particular, since 
\[
(P-t)(1\ \ -t)=(1\ \ -t){\mathcal B},
\]
we have $N(P-t)=\mathrm{det}({\mathcal B})=Q \in R^{\times}$,
and hence $P-t$ is invertible in ${\mathcal O}_R$.
Since the norm is multiplicative, (\ref{eq:LR3}) and (\ref{eq:LR4}) yield
\begin{equation}\label{eq:LR9}
N(w_{n+1}-w_nt) =N(P-t)^n N(w_1-w_0t)  
=Q^n N(w_1-w_0t), 
\end{equation}
for any integer $n$ and any sequaence $(w_n) \in \mathscr{S}(f,R)$.
We can endow the inverse image of ${\mathcal O}_R^{\times}$ by $\varphi_R$:
\[
V_f(R)^{\times} :=\varphi_R^{-1} ({\mathcal O}_R^{\times} )=\left\{ \begin{pmatrix}
a_1\\
a_0\\
\end{pmatrix} \in V_f(R) \ \vline \ \Lambda (a_1,a_0)\in R^{\times} \right\}
\]
where $\Lambda(a_1,a_0):=N(a_1-a_0t)$, and its inverse image by $\phi_R$ :
\[
\mathscr{S}(f,R)^{\times} := \phi_R^{-1} (V_f(R)^{\times} )=\left\{ (w_n) \in \mathscr{S}(f,R)
\ \vline  \ \Lambda (w_1,w_0) \in R^{\times} \right\}
\]
with the structure of a multiplicative group induced from ${\mathcal O}_R^{\times}$.
If $\begin{pmatrix}
a_1\\
a_0\\
\end{pmatrix}$ and $\begin{pmatrix}
b_1\\
b_0\\
\end{pmatrix}$ are  elements of $V_f(R)^{\times}$, then the corresponding product in ${\mathcal O}_R^{\times}$
is
\[
(a_1-a_0 t)(b_1-b_0t)\equiv (a_1b_1-a_0b_0 Q)-(a_0b_1+a_1b_0-Pa_0b_0)t \pmod{f(t)}.
\]
Thus the multiplication in $V_f(R)^{\times}$ is given by
\begin{equation}\label{eq:LR10}
\begin{pmatrix}
a_1\\
a_0\\
\end{pmatrix} * \begin{pmatrix}
b_1\\
b_0\\
\end{pmatrix} =\begin{pmatrix}
a_1b_1-Qa_0b_0\\
a_0b_1+a_1b_0-Pa_0b_0 \\
\end{pmatrix}.
\end{equation}
The identity element is $\begin{pmatrix}
1\\
0\\
\end{pmatrix}$ and $\begin{pmatrix}
a_1 \\
a_0\\
\end{pmatrix}^{-1} =\Lambda (a_1,a_0)^{-1} \begin{pmatrix}
a_1-Pa_0\\
-a_0\\
\end{pmatrix}$.
%
If $\begin{pmatrix}
a_1\\
a_0\\
\end{pmatrix} \in V_f(R)^{\times}$, then we have $ \varphi_R \left( {\mathcal B} \begin{pmatrix}
a_1\\
a_0\\
\end{pmatrix} \right) \in {\mathcal O}_R^{\times}$,
from (\ref{eq:LR4}) and $P-t \in {\mathcal O}_R^{\times}$,
and hence $ {\mathcal B} \begin{pmatrix}
a_1\\
a_0\\
\end{pmatrix} \in V_f(R)^{\times}$. 
Therefore, the multiplicative groups $V_f(R)^{\times}, {\mathcal O}_R^{\times}$ and
$\mathscr{S}(f,R)^{\times}$ have the left actions of $\langle {\mathcal B} \rangle$.

The roots $\theta_1$ and $\theta_2$ of $f(t)$ are the  eigenvalues of ${\mathcal B}$ and the
corresponding eigenspaces are
$\langle \begin{pmatrix}
\theta_1 \\
1\\
\end{pmatrix}
\rangle$ and $\langle \begin{pmatrix}
\theta_2 \\
1\\
\end{pmatrix}
\rangle$ respectively.
Hence,
if $d\ne 0$, then
${\mathcal B}$ is diagonalized by ${\mathcal P}:= \begin{pmatrix}
\theta_1 & \theta_2 \\
1 & 1\\
\end{pmatrix} \in \mathrm{GL}_2(\overline{k}_R)$ as
\[
{\mathcal P}^{-1} {\mathcal B} {\mathcal P} =\begin{pmatrix}
\theta_1 & 0\\
0 & \theta_2 \\
\end{pmatrix}.
\]
%
Let ${\bf w}=(w_n)_{n\in \Z} \in \mathscr{S}(f,R)$. 
 The general term is given by
 \[
 w_n=\frac{A\theta_1^n-B\theta_2^n}{\theta_1-\theta_2},
 \]
 where $A=w_1-w_0\theta_2$ and $B=w_1-w_0\theta_1$. 
We have
\[
AB=\Lambda(w_1,w_0) \quad \in R.
\]
\begin{df}\label{df:Lucas}
 We call the sequence ${\mathcal F}=({\mathcal F}_n) \in \mathscr{S}(f,R) $ with ${\mathcal F}_0=0$ and ${\mathcal F}_1=1$
{\it the Lucas sequence}, and the sequence  ${\mathcal L}=({\mathcal L}_n) \in \mathscr{S}(f,R)$ with 
${\mathcal L}_0=2$ and ${\mathcal L}_1=P$
{\it the companion Lucas sequence} after Lucas, who first introduced them in  \cite{Lu}.
Their general terms are $\displaystyle{ {\mathcal F}_n=\frac{\theta_1^n -\theta_2^n}{\theta_1-\theta_2}}$ and 
${\mathcal L}_n=\theta_1^n+\theta_2^n$.
\end{df}

Next, we recall a product on the sets $\mathscr{S}(f,R)^{\times}$ introduced by Laxton \cite{L1} (see also \cite{B}).
\begin{df}\label{df:LG-law}
 Let ${\bf w}=(w_n), {\bf v}=(v_n)\in \mathscr{S}(f,R)^{\times}$. Write
\[
w_n=\frac{A\theta_1^n-B\theta_2^n}{\theta_1-\theta_2}, \quad v_n=\frac{C\theta_1^n-D\theta_2^n}{\theta_1-\theta_2},
\]
where $A=w_1-w_0\theta_2, B=w_1-w_0\theta_1, C=v_1-v_0\theta_2$ and $D=v_1-v_0\theta_1$.
Laxton defined the product ${\bf w}\times {\bf v}={\bf u}=(u_n)$ by
\[
u_n=\frac{AC\theta_1^n-BD\theta_2^n}{\theta_1-\theta_2},
\]
for any $n\in \Z$. In particular, $u_0$ and $u_1$ are given by
$u_0=w_0v_1+w_1v_0-Pv_0w_0,\ u_1=w_1v_1-Qv_0w_0$.
We get ${\bf u}\in \mathscr{S}(f,R)^{\times}$ since
$\Lambda(u_1,u_0)=ABCD=\Lambda(w_1,w_0)\Lambda(v_1,v_0)\in R^{\times}$.
The associativity is trivial, and the identity is the Lucas sequence ${\mathcal F}=({\mathcal F}_n)$.
The inverse element of 
\[
 {\bf w}=(w_n), \quad w_n=\frac{A\theta_1^n-B\theta_2^n}{\theta_1-\theta_2}
\]
 is given by 
 \[
{\bf w}^{-1}=(u_n), \quad
u_n=\Lambda (w_1,w_0)^{-1}\frac{B\theta_1^n-A\theta_2^n}{\theta_1-\theta_2}.
\]
\end{df}
The multiplicative group structure on $\mathscr{S}(f,R)^{\times}$ defined by Laxton
coincides with one induced from ${\mathcal O}_R^{\times}=(R[t]/(f(t)))^{\times}$ via maps 
$\phi_R$ and $\varphi_R$ in (\ref{eq:LR1}) and (\ref{eq:LR2}), respectively.
We get the following theorem.
\begin{theo}\label{theo:main1} 
Let $R$ be an integral domain. The group strcture of $\mathscr{S}(f,R)^{\times}$ defined by
Laxton coincides with one induced from ${\mathcal O}_R^{\times} =(R[t]/(f(t)))^{\times}$ via the maps 
$\phi_R$ and $\varphi_R$:
\begin{align}
&\mathscr{S}(f,R)^{\times} \overset{\sim}{ \underset{\phi_R}{\longrightarrow}} V_f(R)^{\times} 
\overset{\sim}{ \underset{\varphi_R}{\longrightarrow}} {\mathcal O}_R^{\times} =(R[t]/(f(t)))^{\times}, \notag
\\
& {\bf w}=(w_n) \  \mapsto \ \begin{pmatrix}
w_1\\
w_0\\
\end{pmatrix} \ \mapsto \ w_1-w_0t. \notag
\end{align}
Furthermore, these group isomorphisms are compatible with the following action of $\langle {\mathcal B} \rangle$:
\begin{enumerate}[$(i)$]
\item  For ${\bf w}=(w_n) \in \mathscr{S}(f,R)^{\times}$ and $\nu \in \Z$,
\[
{\mathcal B}^{\nu}. {\bf w} ={\bf v} =(v_n),
\]
where $v_n=w_{n+\nu}$ for any $n\in \Z$.
\label{enum:1maini}
\item  For $\begin{pmatrix}
w_1\\
w_0\\
\end{pmatrix} \in V_f(R)^{\times}$ and $\nu \in \Z$,
\[
{\mathcal B}^{\nu}. \begin{pmatrix}
w_1\\
w_0\\
\end{pmatrix}={\mathcal B}^{\nu} \begin{pmatrix}
w_1\\
w_0\\
\end{pmatrix}
\]
(the right-hand side is the ordinary matrix product).
\label{enum:1mainii}
\item For $w_1-w_0t \in {\mathcal O}_R^{\times}$ and $\nu \in \Z$,
\[
{\mathcal B}^{\nu}. (w_1-w_0t) =(P-t)^{\nu} (w_1-w_0t).
\]
\label{enum:1mainiii}
\end{enumerate}
\end{theo}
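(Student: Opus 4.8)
The plan is to reduce the statement to identities already established in this section. Because a bijection between groups that preserves multiplication automatically preserves the identity and inverses, the first assertion amounts to the single claim that the bijection $\varphi_R\circ\phi_R\colon \mathscr{S}(f,R)^\times\to\mathcal{O}_R^\times$ turns Laxton's product $\times$ into the multiplication of $\mathcal{O}_R^\times$; the three compatibility items will then follow by unwinding the definitions of the $\langle\mathcal{B}\rangle$-actions, which were themselves introduced by transport of structure.

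First I would compute the first two terms of Laxton's product. Take ${\bf w}=(w_n),\ {\bf v}=(v_n)\in\mathscr{S}(f,R)^\times$, set $A=w_1-w_0\theta_2$, $B=w_1-w_0\theta_1$, $C=v_1-v_0\theta_2$, $D=v_1-v_0\theta_1$ as in Definition~\ref{df:LG-law}, and let ${\bf u}=(u_n)={\bf w}\times{\bf v}$. Writing $AC$ as a polynomial in $\theta_2$ and $BD$ as the very same polynomial in $\theta_1$, and using $\theta_1+\theta_2=P$, $\theta_1\theta_2=Q$, the symmetric functions cancel the factor $\theta_1-\theta_2$ and one obtains
\[
u_0=\frac{AC-BD}{\theta_1-\theta_2}=w_0v_1+w_1v_0-Pw_0v_0,\qquad u_1=\frac{AC\theta_1-BD\theta_2}{\theta_1-\theta_2}=w_1v_1-Qw_0v_0,
\]
expressions that manifestly lie in $R$. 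Hence $\phi_R({\bf u})=\binom{u_1}{u_0}$ is exactly the right-hand side of the multiplication formula \eqref{eq:LR10} evaluated at $\phi_R({\bf w})=\binom{w_1}{w_0}$ and $\phi_R({\bf v})=\binom{v_1}{v_0}$. Since \eqref{eq:LR10} is by construction the product on $V_f(R)^\times$ transported from $\mathcal{O}_R^\times$ through $\varphi_R$, this shows $\varphi_R\circ\phi_R$ is multiplicative for $\times$; together with the bijectivity of $\phi_R$ and $\varphi_R$, and the already-recorded facts that Laxton's identity is ${\mathcal F}$ with $\phi_R({\mathcal F})=\binom{1}{0}$ and $\varphi_R\binom{1}{0}=1$, it follows that $\varphi_R\circ\phi_R$ is an isomorphism of groups.

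For the compatibility with $\langle\mathcal{B}\rangle$, item $(ii)$ is merely the definition of the $\mathcal{B}$-action on $V_f(R)$, restricted to the $\mathcal{B}$-stable subset $V_f(R)^\times$ (stability being \eqref{eq:LR4} together with $P-t\in\mathcal{O}_R^\times$). Item $(i)$ then follows from \eqref{eq:LR3}: if ${\bf v}$ is the sequence with $\binom{v_1}{v_0}=\mathcal{B}^\nu\binom{w_1}{w_0}$, then $\binom{v_{n+1}}{v_n}=\mathcal{B}^n\mathcal{B}^\nu\binom{w_1}{w_0}=\mathcal{B}^{n+\nu}\binom{w_1}{w_0}=\binom{w_{n+\nu+1}}{w_{n+\nu}}$, i.e. $v_n=w_{n+\nu}$, and the $\mathcal{B}$-action on $\mathscr{S}(f,R)^\times$ is the one induced through $\phi_R$. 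Item $(iii)$ is \eqref{eq:LR4} iterated: $\mathcal{B}$ acts on $\mathcal{O}_R$ as multiplication by $P-t$, which is a unit because $N(P-t)=\det(\mathcal{B})=Q\in R^\times$, so $\mathcal{B}^\nu$ acts as multiplication by $(P-t)^\nu$ for every $\nu\in\Z$, negative exponents included.

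I do not expect a real obstacle. The only point needing a little care is the symbolic cancellation producing $u_0$ and $u_1$: the division by $\theta_1-\theta_2$ is performed inside $\overline{k}_R$, so one must check that the outputs are genuinely polynomial in $P,Q$ and the $w_i,v_i$ and hence lie in $R$ (they do, as displayed above). Everything else is bookkeeping on top of \eqref{eq:LR3}, \eqref{eq:LR4} and \eqref{eq:LR10}, which are already available.
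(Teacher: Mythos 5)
Your proposal is correct and follows essentially the same route the paper takes: the paper records $u_0=w_0v_1+w_1v_0-Pw_0v_0$ and $u_1=w_1v_1-Qw_0v_0$ in Definition~\ref{df:LG-law}, matches them against the transported product \eqref{eq:LR10}, and derives the $\langle\mathcal{B}\rangle$-compatibility from \eqref{eq:LR3} and \eqref{eq:LR4}, exactly as you do. Your explicit symmetric-function cancellation and the check that the outputs lie in $R$ simply spell out what the paper leaves implicit.
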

\section{Group structures of $\mathscr{S}(f,R)^{\times}$}\label{sec:GS}
In this section, we assume that $R$ is a unique factorization domain, and
 study the group structures of
\begin{align}
&\mathscr{S}(f,R)^{\times} \overset{\sim}{ \underset{\phi_R}{\longrightarrow}} V_f(R)^{\times} 
\overset{\sim}{ \underset{\varphi_R}{\longrightarrow}} {\mathcal O}_R^{\times} =(R[t]/(f(t)))^{\times}, \label{eq:GS1}
\\
& {\bf w}=(w_n) \  \mapsto \ \begin{pmatrix}
w_1\\
w_0\\
\end{pmatrix} \ \mapsto \ w_1-w_0t \notag
\end{align}
according to the irreducibility of the polynomial $f(t)$.
\begin{theo}\label{theo:Vf}
\begin{enumerate}[$(1)$]
\item  If $f(t)$ is irreducible over $R$, then we have an isomorphism of $R$-algebras
\[
\psi_R:\ V_f(R) \tt R[\theta_1], \quad \begin{pmatrix}
a_1\\
a_0\\
\end{pmatrix} \mapsto a_1-a_0\theta_1.
\]
This yields a group isomorphism
\[
\mathscr{S}(f,R)^{\times} \simeq V_f(R)^{\times} \simeq R[\theta_1 ]^{\times}.
\]
\label{enum:Vf1}
\item  Assume that $f(t)$ is reducible over $R$, and hence $d=\mathrm{disc}(f)\in R^2$.
\label{enum:Vf2}
\begin{enumerate}[$(i)$]
\item The case where $f(t)$ has no double root in $R$: Let $H_R$ be an $R$-subalgebra of $R\times R$ defined by
\[
H_R:= \{(x,y)\in R\times R \ \mid \ x\equiv y \pmod{ \sqrt{d} } \},
\]
$($if $d\in R^{\times}$, then $H_R=R\times R$ $)$.
We have an isomorphism of $R$-algebras
\[
\psi_R:\ V_f(R) \tt H_R, \quad \begin{pmatrix}
a_1\\
a_0\\
\end{pmatrix} \mapsto (a_1-a_0\theta_1, a_1-a_0\theta_2).
\]
This yields a group isomorphism
\[
\mathscr{S}(f,R)^{\times} \simeq V_f(R)^{\times} \simeq H_R^{\times}=\{ (x,y) \in R^{\times} \times R^{\times} \ \mid \ x\equiv y \pmod{\sqrt{d}} \}.
\] 
\label{enum:Vf2i}
\item The case where $f(t) $ has a double root $\theta$ in $R$: We have an isomorphism of $R$-algebras
\[
\psi_R :\ V_f(R) \tt R[\varepsilon]/(\varepsilon^2), \quad \begin{pmatrix}
a_1\\
a_0\\
\end{pmatrix} \mapsto a_1-a_0(\varepsilon+\theta) \bmod{\varepsilon^2}.
\]
 This yields a group isomorphism
 \[
 \mathscr{S}(f,R)^{\times} \simeq V_f(R)^{\times} \simeq (R[\varepsilon]/(\varepsilon^2))^{\times}=\{ x+y\varepsilon  \bmod{\varepsilon^2} \mid  x\in R^{\times},\ y\in R\}.
 \]
 \label{enum:Vf2ii}
\end{enumerate}
\end{enumerate}
\end{theo}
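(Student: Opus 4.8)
The plan is to reduce everything to Theorem~\ref{theo:main1}. That theorem already supplies group isomorphisms $\mathscr{S}(f,R)^{\times}\simeq V_f(R)^{\times}\simeq {\mathcal O}_R^{\times}$, and by construction $\varphi_R\colon V_f(R)\tt{\mathcal O}_R=R[t]/(f(t))$ is an isomorphism of $R$-algebras. Hence in each of the three cases it suffices to (a) exhibit an explicit isomorphism of $R$-algebras from ${\mathcal O}_R$ onto the target ring ($R[\theta_1]$, resp.\ $H_R$, resp.\ $R[\varepsilon]/(\varepsilon^2)$) whose composite with $\varphi_R$ is the stated map $\psi_R$, and (b) identify the unit group of that target ring, so that the $R$-algebra isomorphism restricts to the asserted group isomorphism. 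All three identifications of ${\mathcal O}_R$ come from the natural evaluation or substitution homomorphisms out of $R[t]$; the only input beyond elementary commutative algebra is Gauss's lemma, which is exactly where the standing hypothesis that $R$ is a UFD is used.

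Case (1): $f$ irreducible over $R$. Since $f$ is monic, Gauss's lemma shows $f$ remains irreducible over $k_R$, so $f$ is the minimal polynomial of $\theta_1$ over $k_R$. The evaluation homomorphism $\mathrm{ev}_{\theta_1}\colon R[t]\to R[\theta_1]$, $g\mapsto g(\theta_1)$, is surjective by definition of $R[\theta_1]$; and if $g(\theta_1)=0$ then $f\mid g$ in $k_R[t]$, so writing $g=fh$ with $h\in k_R[t]$ and using that $f$ is monic, Gauss's lemma gives $h\in R[t]$, whence $\ker\mathrm{ev}_{\theta_1}=(f)$. Thus $\mathrm{ev}_{\theta_1}$ induces an $R$-algebra isomorphism ${\mathcal O}_R\tt R[\theta_1]$, $t\bmod f\mapsto\theta_1$; composing it with $\varphi_R$ sends $\begin{pmatrix}a_1\\a_0\end{pmatrix}\mapsto a_1-a_0\theta_1$, which is $\psi_R$. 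Passing to unit groups and combining with $\phi_R$ gives $\mathscr{S}(f,R)^{\times}\simeq V_f(R)^{\times}\simeq R[\theta_1]^{\times}$.

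Case (2): $f$ reducible over $R$. A monic quadratic that is reducible over a domain factors into monic linear factors (the leading coefficients of the two degree-$1$ factors are units and may be normalized to $1$), so $f=(t-\theta_1)(t-\theta_2)$ with $\theta_1,\theta_2\in R$; in particular $d=(\theta_1-\theta_2)^2\in R^2$ and we may take $\sqrt d=\theta_1-\theta_2$. In subcase (i), $\theta_1\ne\theta_2$, consider $\mathrm{ev}\colon R[t]\to R\times R$, $g\mapsto(g(\theta_1),g(\theta_2))$. Division by the monic factors (valid over any commutative ring) shows $\ker\mathrm{ev}=(f)$: if $g(\theta_1)=0$ write $g=(t-\theta_1)q$, then $0=g(\theta_2)=(\theta_2-\theta_1)q(\theta_2)$ and $\theta_2-\theta_1$ is a non-zero-divisor, so $q(\theta_2)=0$ and $q=(t-\theta_2)q'$. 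The image is $\{(a_1-a_0\theta_1,\;a_1-a_0\theta_2):a_0,a_1\in R\}$, whose two entries differ by the multiple $a_0(\theta_1-\theta_2)=a_0\sqrt d$ of $\sqrt d$; that image is precisely $H_R$. Hence ${\mathcal O}_R\tt H_R$, and composing with $\varphi_R$ gives $\psi_R$. An element $(x,y)\in H_R$ is a unit in $H_R$ iff $x,y\in R^{\times}$: necessity is immediate from $H_R\subseteq R\times R$, and for sufficiency note that $\sqrt d\mid x-y$ and $xy\in R^{\times}$ force $\sqrt d\mid x^{-1}-y^{-1}$, so $(x^{-1},y^{-1})\in H_R$; this gives the stated $H_R^{\times}$. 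In subcase (ii), $f=(t-\theta)^2$, the substitution $t\mapsto\varepsilon+\theta$ gives a surjection $R[t]\to R[\varepsilon]/(\varepsilon^2)$, $g\mapsto g(\theta)+g'(\theta)\varepsilon$, whose kernel is $\{g:g(\theta)=g'(\theta)=0\}=((t-\theta)^2)=(f)$ (again by division), so ${\mathcal O}_R\tt R[\varepsilon]/(\varepsilon^2)$; composing with $\varphi_R$ yields $\psi_R$. Since $\varepsilon$ is nilpotent, $x+y\varepsilon$ is a unit iff its reduction $x$ modulo $(\varepsilon)$ is a unit of $R$ (with inverse $x^{-1}-x^{-2}y\varepsilon$), so $(R[\varepsilon]/(\varepsilon^2))^{\times}=\{x+y\varepsilon:x\in R^{\times},\ y\in R\}$, and the group isomorphism follows.

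The computations are largely routine; the point requiring the most care is the reducible split case, where one must check that the kernel of the evaluation map is exactly $(f)$ rather than merely containing it — this is the only place the domain hypothesis (so that $\theta_1-\theta_2$ is a non-zero-divisor) is genuinely needed — and that its image is precisely the congruence subalgebra $H_R$, together with the small verification that inverses remain inside $H_R$. In the irreducible case the one subtlety is the two uses of Gauss's lemma (to keep $f$ irreducible over $k_R$, and to descend $f\mid g$ from $k_R[t]$ to $R[t]$), which is why $R$ is assumed to be a unique factorization domain.
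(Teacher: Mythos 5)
Your proof is correct and follows essentially the same route as the paper: in each case one identifies ${\mathcal O}_R=R[t]/(f(t))$ with the target ring via the evaluation (resp.\ CRT-type, resp.\ substitution $t\mapsto\varepsilon+\theta$) homomorphism and then passes to unit groups through $\phi_R$ and $\varphi_R$. The only differences are that you supply details the paper leaves implicit --- the two uses of Gauss's lemma in the irreducible case and the explicit verification of $H_R^{\times}$ and $(R[\varepsilon]/(\varepsilon^2))^{\times}$ --- while the paper instead writes down the explicit preimage $g(t)=\frac{1}{\theta_2-\theta_1}\{y(t-\theta_1)-x(t-\theta_2)\}$ for the surjectivity onto $H_R$, a step you assert but do not fully spell out (it is, however, immediate).
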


\begin{proof}
The assertion (\ref{enum:Vf1}) follows from (\ref{eq:LR2}) and the isomorphism
\[
{\mathcal O}_R=R[t]/(f(t)) \tt R[\theta_1], \quad g(t) \bmod{f(t)} \mapsto g(\theta_1).
\]
Next, we show the assertion (\ref{enum:Vf2}).

In the case (\ref{enum:Vf2i}), we have $\theta_1\ne \theta_2$ in $R$. Consider the following homomorphism of $R$-algebras
\begin{align}
& {\mathcal O}_R=R[t]/(f(t)) \longrightarrow  R[t]/(t-\theta_1) \times R[t]/(t-\theta_2)   \tt  R\times R, \label{eq:GS2}\\
& \qquad g(t) \bmod{f(t)} \ \mapsto   (g(t)  \bmod{t-\theta_1}, g(t) \bmod{t-\theta_2})   \mapsto 
\ (g(\theta_1),g(\theta_2)).\notag
\end{align}
The map is injective, and the image is in the set $H_R$ since $\theta_1 \equiv \theta_2 \pmod{\sqrt{d}}$.
Conversely, we have
\[
g(t) =\frac{1}{\theta_2-\theta_1} \{ y(t-\theta_1)-x(t-\theta_2) \} \quad \in R[t]
\]
for any $(x,y) \in H_R $, and $g(t) \bmod{f(t)}$ maps to $(x,y)$ by the map (\ref{eq:GS2}).
we get the isomorphism
\[
R[t]/(f(t)) \simeq H_R,
\]
and the assertion follows from the isomorphism and (\ref{eq:LR2}).

In the case (\ref{enum:Vf2ii}), we have $\theta_1=\theta_2 $ in $R$. The assertion follow from (\ref{eq:LR2}) and 
the following isomorphism of $R$-algebras
\[
{\mathcal O}_R=R[t]/(f(t)) \tt R+R\varepsilon, \quad a_1-a_0t \bmod{f(t)} \mapsto a_1-a_0 (\varepsilon+\theta).
\]
\qed
\end{proof}
\section{Equivalence classes}\label{sec:EC}

Let $R$ be a unique factorization domain, and assume $Q\in R^{\times}$.
In this section, we introduce two relations $\ss$ and $\s$ on the set $\mathscr{S}(f,R) $,
and consider the quotient sets of $\mathscr{S}(f,R)^{\times}$ by these  relations. 
The group structure of $\mathscr{S}(f,R)^{\times}$ defined in \S~\ref{sec:LR}  naturally
induce the group structures on the
quotient sets.
Note that we have $w_n \in R$ for any $n\in \Z$ by the assumption $Q\in R^{\times}$.
\begin{df}\label{df:eq}
Let ${\bf w}=(w_n),\ {\bf v}=(v_n) \in \mathscr{S}(f,R) $.
\begin{enumerate}[$(1)$]
\item We define ${\bf w} \ss {\bf v}$ if  there exists $\lambda \in R^{\times}$ such that
$w_n=\lambda v_n$ for any $n\in \Z$.
\label{enum:dfeq1}
\item We define ${\bf w} \s {\bf v}$ if there exists $\lambda \in R^{\times}$ and
$\nu \in \Z$ such that $w_n =\lambda v_{n+\nu} $ for any $n\in \Z$.
\label{enum:dfeq2}
\end{enumerate}
\end{df}
These relations are equivalence relations on the set $\mathscr{S}(f,R) $.
By the isomorphism (\ref{eq:LR1}), we can introduce the corresponding relations $\ss$ and $\s$ on the set
$V_f(R)$.
Let $\begin{pmatrix}
a_1\\
a_0\\
\end{pmatrix}, \begin{pmatrix}
b_1\\
b_0\\
\end{pmatrix} \in V_f(R)$.
\begin{itemize}
\item[(1)] We have $\begin{pmatrix}
a_1\\
a_0\\
\end{pmatrix} \ss \begin{pmatrix}
b_1\\
b_0\\
\end{pmatrix}$ if there exists $\lambda \in R^{\times}$ such that
$\begin{pmatrix}
a_1\\
a_0\\
\end{pmatrix}=\lambda \begin{pmatrix}
b_1\\
b_0\\
\end{pmatrix}$.
\item[(2)] We have $\begin{pmatrix}
a_1\\
a_0\\
\end{pmatrix} \s \begin{pmatrix}
b_1\\
b_0\\
\end{pmatrix}$ if there exist $\lambda \in R^{\times}$ and $\nu \in \Z$ such that
$\begin{pmatrix}
a_1\\
a_0\\
\end{pmatrix}=\lambda {\mathcal B}^{\nu} \begin{pmatrix}
b_1\\
b_0\\
\end{pmatrix}$.
\end{itemize}
If  $\begin{pmatrix}
a_1\\
a_0\\
\end{pmatrix} \s \begin{pmatrix}
b_1\\
b_0\\
\end{pmatrix}$, then  there exist $\lambda \in R^{\times}$ and $\nu \in \Z$ such that
$\begin{pmatrix}
a_1\\
a_0\\
\end{pmatrix}=\lambda {\mathcal B}^{\nu} \begin{pmatrix}
b_1\\
b_0\\
\end{pmatrix}$, hence we have from (\ref{eq:LR4})
\begin{align*}
\Lambda (a_1, a_0) &=N(a_1-a_0t)\\
&=\lambda^2 N(P-t)^{\nu} N(b_1-b_0t)\\
&=\lambda^2 Q^{\nu} \Lambda (b_1,b_0).
\end{align*}
We conclude that $\begin{pmatrix}
a_1\\
a_0\\
\end{pmatrix} \in V_f(R)^{\times}$ if and only if $\begin{pmatrix}
b_1\\
b_0\\
\end{pmatrix} \in V_f(R)^{\times}$.
\begin{df}\label{df:LG} Define two quotient sets of $\mathscr{S}(f,R)^{\times} $using the relations above:
\[
G_R(f):=\mathscr{S}(f,R)^{\times} /\ss, \quad \text{and} \quad G_R^*(R):=\mathscr{S}(f,R)^{\times}/\s.
\] 
\end{df}
We define the products on the sets $G_R(f)$ and $G_R^*(f)$ induced by the abelian group $\mathscr{S}(f,R)^{\times} $.
We can see that the products are well-defined as follows. First, recall that the product in $\mathscr{S}(f,R)^{\times} $
is induced by that of ${\mathcal O}_R^{\times}=(R[t]/(f(t)))^{\times}$ via the isomorphisms (\ref{eq:GS1}).
The products on the sets  $G_R(f)$ and $G_R^*(f)$ are well-defined because the multiplication by $\lambda \in R^{\times}$ on 
$V_f(R)^{\times}$ is equivalent to that on ${\mathcal O}_R^{\times}$, and the action of ${\mathcal B}^{\nu} \ (\nu \in \Z)$
on $V_f(R)^{\times}$ is interpreted as the multiplication of $(P-t)^{\nu}$ on ${\mathcal O}_R^{\times}$ from (\ref{eq:LR4}).
The
identity elements of $G_R(f) $ and $G_R^*(f)$ are the class of the  Lucas sequence ${\mathcal F}$,  the inverse element
of the class  $\displaystyle{[{\bf w}],\  {\bf w}=(w_n), \ w_n=\frac{A\theta_1^n-B\theta_2^n}{\theta_1-\theta_2} }$
is given by $\displaystyle{[{\bf w}]^{-1}=[ {\bf u} ], \ {\bf u} =(u_n),\ u_n=\frac{B\theta_1^n-A\theta_2^n}{\theta_1-\theta_2}}$.

\begin{rem}\label{rem:LG}
There exists a natural bijection between $G_{\Q}^*(f)$ and the Laxton group $G^*(f)$ in \S \ref{sec:LG}.
\end{rem}

We denote by $\left[ \begin{array}{c}
a_1\\
a_0\\
\end{array} \right] $ the class of $ V_f(R)^{\times} /\ss$ containing
$\begin{pmatrix}
a_1\\
a_0\\
\end{pmatrix}$.
For $\left[ \begin{array}{c}
0\\
1\\
\end{array} \right] \in V_f(R)^{\times} /\ss$, we have $
\left[ \begin{array}{c}
0\\
1\\
\end{array} \right]^{-1} =\left[ \begin{array}{c}
P\\
1\\
\end{array} \right]$ from (\ref{eq:LR10}) and hence
(\ref{eq:LR6}) yields 
the action of ${\mathcal B}$ on $V_f(R)^{\times}/\ss :$ 
\begin{equation}\label{eq:EC1}
{\mathcal B}^n \left[ \begin{array}{c}
a_1\\
a_0\\
\end{array} \right]=\left[ 
\begin{array}{c}
0\\
1\\
\end{array} \right]^{-n} * \left[
\begin{array}{c}
a_1\\
a_0\\
\end{array} \right], 
\end{equation}
for any $n\in \Z$ and $\left[ \begin{array}{c}
a_1\\
a_0\\
\end{array} \right] \in V_f(R)^{\times}/ \ss$.
%
%

We identify the group $G_R(f)$ with $V_f(R)^{\times}/\ss$ and the group $G_R^*(f)$ with
$V_f(R)^{\times}/\s$ via the group isomorphisms induced by (\ref{eq:LR1}):
\begin{align}
& G_R(f) \tt V_f(R)^{\times}/\ss, \quad [(w_n)] \mapsto \left[
\begin{array}{c}
w_1\\
w_0\\
\end{array} \right], \label{eq:EC2}\\
& G_R^*(f) \tt V_f(R)^{\times}/\s, \quad [(w_n)] \mapsto \left[
\begin{array}{c}
w_1\\
w_0\\
\end{array} \right]. \label{eq:EC2} \notag
\end{align}
Therefore, we denote a class of $G_R(f)$ or $G_R^*(f)$ by $\left[
\begin{array}{c}
w_1\\
w_0\\
\end{array} \right]$ instead of $[ (w_n) ]$. Consider a natural surjection
\begin{equation}\label{eq:EC3}
\pi :\ G_R(f) \longrightarrow G_R^*(f), \quad \left[
\begin{array}{c}
w_1\\
w_0\\
\end{array} \right] \mapsto \left[
\begin{array}{c}
w_1\\
w_0\\
\end{array} \right]
\end{equation}
and the image of a subgroup $N$ of $G_R(f)$.
\begin{lem}\label{lem:ImPi}
Let $N$ be a subgroup of $G_R(f)$ and $\pi : G_R(f) \to G_R^*(f)$ be the natural surjection.
Then we have a group isomorphism:
\[
N/N\cap {\tiny \left\langle 
\left[
\begin{array}{c}
0\\
1\\
\end{array} \right]
\right\rangle } \tt \pi (N), \quad 
\left[
\begin{array}{c}
w_1\\
w_0\\
\end{array} \right] \bmod{ N\cap {\tiny \left\langle 
\left[
\begin{array}{c}
0\\
1\\
\end{array} \right] \right\rangle} } \mapsto \left[\begin{array}{c}
w_1\\
w_0\\
\end{array} \right].
\]
\end{lem}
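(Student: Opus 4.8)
The plan is to recognise the claimed isomorphism as an instance of the first isomorphism theorem applied to the restriction of $\pi$ to $N$; since every group occurring here is abelian, there are no normality subtleties and the only real work is to pin down $\ker\pi$.

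First I would note that $\pi$ in \eqref{eq:EC3} is a well-defined surjective group homomorphism: the products on $G_R(f)$ and $G_R^*(f)$ were both defined as the ones induced from $\mathscr{S}(f,R)^{\times}$ (equivalently, from ${\mathcal O}_R^{\times}$ via \eqref{eq:GS1}), and $\pi$ is the identity on representatives, hence multiplicative. The key step is then to identify the kernel. A class $\left[\begin{smallmatrix} w_1\\ w_0\end{smallmatrix}\right]\in G_R(f)$ lies in $\ker\pi$ if and only if $(w_n)\s{\mathcal F}$, i.e.\ there are $\lambda\in R^{\times}$ and $\nu\in\Z$ with $\begin{pmatrix} w_1\\ w_0\end{pmatrix}=\lambda\,{\mathcal B}^{\nu}\begin{pmatrix}1\\0\end{pmatrix}$, where $\begin{pmatrix}1\\0\end{pmatrix}=\phi_R({\mathcal F})$ is the identity of $V_f(R)^{\times}$. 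Passing to $V_f(R)^{\times}/\ss$, multiplication by $\lambda\in R^{\times}$ becomes trivial, and by \eqref{eq:EC1} together with the fact that $\left[\begin{smallmatrix}1\\0\end{smallmatrix}\right]$ is the identity we get that this class equals $\left[\begin{smallmatrix}0\\1\end{smallmatrix}\right]^{-\nu}$. Hence $\ker\pi=\left\langle\left[\begin{smallmatrix}0\\1\end{smallmatrix}\right]\right\rangle$.

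Finally, restrict $\pi$ to $N$: it is a homomorphism $N\to G_R^*(f)$ with image $\pi(N)$ by definition and with kernel $N\cap\ker\pi=N\cap\left\langle\left[\begin{smallmatrix}0\\1\end{smallmatrix}\right]\right\rangle$. The first isomorphism theorem yields a group isomorphism $N\big/\!\left(N\cap\left\langle\left[\begin{smallmatrix}0\\1\end{smallmatrix}\right]\right\rangle\right)\tt\pi(N)$, and unwinding it on a representative $\left[\begin{smallmatrix}w_1\\w_0\end{smallmatrix}\right]$ gives exactly the formula in the statement.

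I do not expect any genuine obstacle here: the argument is purely formal. The one point that needs to be stated carefully is the kernel computation, namely translating "$\s$-equivalent to the Lucas sequence ${\mathcal F}$" into "equal in $G_R(f)$ to a power of $\left[\begin{smallmatrix}0\\1\end{smallmatrix}\right]$", which is precisely where \eqref{eq:EC1} and the identification of ${\mathcal F}$ with the identity of $\mathscr{S}(f,R)^{\times}$ are used.
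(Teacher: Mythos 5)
Your proposal is correct and follows essentially the same route as the paper: identify the kernel of $\pi$ restricted to $N$ as $N\cap\{{\mathcal B}^{\nu}\left[\begin{smallmatrix}1\\0\end{smallmatrix}\right]\mid\nu\in\Z\}$, rewrite this via (\ref{eq:EC1}) as $N\cap\left\langle\left[\begin{smallmatrix}0\\1\end{smallmatrix}\right]\right\rangle$, and apply the first isomorphism theorem. The only difference is that you spell out the well-definedness of $\pi$ and the elimination of the scalar $\lambda$ explicitly, which the paper leaves implicit.
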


\begin{proof}
The  kernel of the restriction map 
$ \pi \mid_N:\ N \to \pi (N)$ is the subgroup
$N \cap \left\{ {\mathcal B}^{\nu} \left[
\begin{array}{c}
1\\
0\\
\end{array} \right] \ \vline \nu \in \Z \right\}$. Since
$ {\mathcal B}^{\nu} \left[
\begin{array}{c}
1\\
0\\
\end{array} \right] =\left[
\begin{array}{c}
0\\
1\\
\end{array} \right]^{-\nu}$ from (\ref{eq:EC1}), we get the group isomorphism:
\[
N/N\cap {\tiny \left\langle 
\left[
\begin{array}{c}
0\\
1\\
\end{array} \right]
\right\rangle } \simeq \pi (N).
\]
\qed
\end{proof}

Note that $\pi (N)$ is the quotient of $N$ by the action of $\langle {\mathcal B} \rangle$.
In particular, we get
\begin{equation}\label{eq:EC4}
G_R(f)/{\tiny  \left\langle 
\left[
\begin{array}{c}
0\\
1\\
\end{array} \right] \right\rangle }
\overset{\sim}{ \underset{\pi}{\longrightarrow}} G_R^*(f), \quad
\left[
\begin{array}{c}
w_1\\
w_0\\
\end{array} \right] \bmod{ {\tiny \left\langle 
\left[
\begin{array}{c}
0\\
1\\
\end{array} \right] \right\rangle } } \mapsto \left[\begin{array}{c}
w_1\\
w_0\\
\end{array} \right].
\end{equation}

Assume that $f$ is irreducible over $R$. We get the following group isomorphism from Theorem~\ref{theo:Vf} (\ref{enum:Vf1}).
\begin{equation}\label{eq:EC5}
\Psi_R:\ G_R(f) \tt R[\theta_1]^{\times}/R^{\times}, \quad \left[ \begin{array}{c}
w_1\\
w_0\\
\end{array} \right] \mapsto w_1-w_0\theta_1 \bmod{R^{\times}}.
\end{equation}
Since $\Psi_R \left( \left[ 
\begin{array}{c}
0\\
1\\
\end{array} \right] \right)=\theta_1 \bmod{R^{\times}}$, the isomorphism induces the following isomorphism
\begin{equation}\label{eq:EC6}
\Psi_R^*:\ G^*_R(f) \tt R[\theta_1]^{\times}/R^{\times} \langle \theta_1 \rangle,  \quad \left[ \begin{array}{c}
w_1\\
w_0\\
\end{array} \right] \mapsto w_1-w_0\theta_1 \bmod{R^{\times} \langle \theta_1 \rangle }.
\end{equation}

Assume that $f$ is reducible over $R$ and $f$ has no double root in $R$. Furthermore, we assume that $R$ is a local ring.
Let $H_R^{\times} =\{ (x,y) \in R^{\times} \times R^{\times}  \mid  x\equiv y \pmod{\sqrt{d}} \}$ be the group in 
Theorem~\ref{theo:Vf}(\ref{enum:Vf2}),(\ref{enum:Vf2i}) (if $d\in R^{\times}$, then $H_R^{\times} =R^{\times} \times R^{\times}$).
We get a group isomorphism
\begin{equation}\label{eq:EC7}
H_R^{\times}/I_R \tt \begin{cases}
R^{\times} & (\text{if}\ d\in R^{\times} ),\\
1+\sqrt{d} \ R & (\text{if}\ d\not\in R^{\times} ),
\end{cases} \quad (x,y) \bmod{I_R} \mapsto xy^{-1} 
\end{equation}
where $I_R:= \{ (x,x)  \mid  x\in R^{\times} \}$ and $1+\sqrt{d} \ R:= \{ 1+\sqrt{d} \ z \mid z\in R \}$. Then we get the following group isomorphism from Theorem~\ref{theo:Vf} (\ref{enum:Vf2}),(\ref{enum:Vf2i}) and
(\ref{eq:EC7}).
\begin{equation}\label{eq:EC8}
\Psi_R:\ G_R(f) \tt 
\begin{cases}
R^{\times} & (\text{if}\ d\in R^{\times} ),\\
1+\sqrt{d} \ R & (\text{if}\ d\not\in R^{\times} ),
\end{cases} \quad \left[ 
\begin{array}{c}
w_1\\
w_0\\
\end{array} \right] \mapsto (w_1-w_0\theta_1)(w_1-w_0 \theta_2)^{-1} .
\end{equation}
Since $\Psi_R \left( \left[ \begin{array}{c}
0\\
1\\
\end{array} \right] \right)=\theta_1 \theta_2^{-1}$, the isomorphism induces the following isomorphism
\begin{align}
\Psi_R^*:\ G_R^*(f) \tt 
\begin{cases}
R^{\times}/\langle \theta_1 \theta_2^{-1} \rangle  & (\text{if}\ d\in R^{\times} ),\\
(1+\sqrt{d} \ R) /\langle \theta_1 \theta_2^{-1} \rangle  &(\text{if}\ d\not\in R^{\times} ),
\end{cases} \label{eq:EC9} \\
 \left[ 
\begin{array}{c}
w_1\\
w_0\\
\end{array} \right] \mapsto (w_1-w_0\theta_1)(w_1-w_0 \theta_2)^{-1}  \bmod{\langle \theta_1 \theta_2^{-1} \rangle}. \notag
\end{align}


Assume that $f$ has a double root $\theta$ in $R$.
From Theorem~\ref{theo:Vf} (\ref{enum:Vf2}),(\ref{enum:Vf2ii}) and the following surjective group homomorphism
\[
(R[\varepsilon]/(\varepsilon^2))^{\times}  \longrightarrow R, \quad x+y\varepsilon \bmod{\varepsilon^2} \mapsto x^{-1}y,
\]
we get the group isomorphism
\begin{equation}\label{eq:EC10}
\Psi_R : G_R(f) \tt R, \quad
\left[
\begin{array}{c}
w_1\\
w_0\\
\end{array} \right] \mapsto -w_0 (w_1-w_0\theta)^{-1}.
\end{equation}
The surjectivity follows from the fact that $\left[ \begin{array}{c}
1+y\theta \\
-y \\
\end{array} \right] \in G_R(f)$ maps to $y\in R$.
Since $\Psi_R \left( \left[ \begin{array}{c}
0\\
1\\
\end{array} \right] \right)=\theta^{-1}$, the isomorphism induces the
following isomorphism
\begin{equation}\label{eq:EC11}
\Psi_R^* : G_R^*(f) \tt R/\langle \theta^{-1} \rangle, \quad \left[
\begin{array}{c}
w_1\\
w_0\\
\end{array} \right] \mapsto -w_0 (w_1-w_0\theta)^{-1} \bmod{ \langle \theta^{-1} \rangle}.
\end{equation}
 
 In the case  $R=\Q$, we denote the discriminant $d$ of $f$ by $D$:
 \[
 D:= P^2-4Q \ \in \Q^{\times}.
 \]
 By summarizing the above discussion, we obtain the following theorem.
 
\begin{theo}\label{theo:G} 
Let $p$ be a prime number with $p\nmid Q$.
\begin{enumerate}[$(1)$]
\item If $f(t)$ is irreducible over $\Q$, then we have
\[
G_{\Q}(f)\tt \Q(\theta_1)^{\times}/\Q^{\times}, \quad \left[ \begin{array}{c}
w_1\\
w_0\\
\end{array} \right]
 \mapsto w_1-w_0\theta_1 \bmod{\Q^{\times}}.
 \]
 
If $f(t)$ is reducible over $\Q$, then we have 
\[
G_{\Q}(f)\tt \Q^{\times}, \quad \left[ \begin{array}{c}
w_1\\
w_0\\
\end{array} \right]
 \mapsto (w_1-w_0\theta_1)(w_1-w_0\theta_2)^{-1}.
\]
\label{enum:G1}
\item  If $f(t)$ is irreducible over $\Q$, then we have
\[
G_{\Z_{(p)}}(f) \tt \Z_{(p)}[\theta_1]^{\times}/\Z_{(p)}^{\times}, \quad 
\left[ \begin{array}{c}
w_1\\
w_0\\
\end{array} \right] \mapsto w_1-w_0\theta_1\bmod{\Z_{(p)}^{\times}}.
\]

If $f(t)$ is reducible over $\Q$, then we have 
\[G_{\Z_{(p)}}(f)\tt \begin{cases} \Z_{(p)}^{\times} & \text{if}\ p\nmid D,\\
1+p^{\frac{s}{2}} \Z_{(p)} & \text{if}\ p|D,
\end{cases} \quad \left[ \begin{array}{c}
w_1\\
w_0\\
\end{array} \right]
 \mapsto (w_1-w_0\theta_1)(w_1-w_0\theta_2)^{-1}.
\]
\label{enum:G2}
\item
\begin{enumerate}[$(i)$]
\item If $f(t) \bmod{p}$ is irreducible over $\F_p$, then we have
\[
G_{\F_p}(f)\tt \F_p(\theta_1)^{\times}/\F_p^{\times}, \quad 
\left[ \begin{array}{c}
w_1\\
w_0\\
\end{array} \right] \mapsto w_1-w_0\theta_1\bmod{\F_p^{\times}}.
\]
\label{enum:G3i}
\item Assume that $f(t)$ is reducible over $\F_p$.

If $p\nmid D$, then we have 
\[
G_{\F_p}(f) \tt \F_p^{\times} 
, \quad \left[ \begin{array}{c}
w_1\\
w_0\\
\end{array} \right]  \mapsto (w_1-w_0\theta_1)(w_1-w_0\theta_2)^{-1}.
\]

If $p|D$, then we have 
\[
G_{\F_p}(f) \tt  \F_p, \quad \left[ \begin{array}{c}
w_1\\
w_0\\
\end{array} \right] \mapsto -w_0(w_1-w_0\theta)^{-1}, 
\]
where $\theta$ is the double root of $f(t) \bmod{p}$.
\label{enum:G3ii}
\end{enumerate}
\label{enum:G3}
\end{enumerate}
\end{theo}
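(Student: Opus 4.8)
The plan is to derive everything as a specialization of the general results of \S\ref{sec:EC}, after checking that the hypotheses there are met for $R=\Q$, $R=\Z_{(p)}$, and $R=\F_p$. In each case $R$ is a UFD (indeed a field or a DVR) and $Q\in R^{\times}$ (for $R=\Z_{(p)}$ and $R=\F_p$ this uses $p\nmid Q$), so the constructions of Definitions~\ref{df:eq} and \ref{df:LG} apply and the isomorphisms (\ref{eq:EC6}), (\ref{eq:EC9}), (\ref{eq:EC11}) are available. The only genuinely case-dependent input is the discriminant: over $\Q$ we have $d=D$ and the residue field condition $D\in R^{\times}$ vs. $D\notin R^{\times}$ is vacuous (a nonzero rational is always a unit in $\Q$), over $\Z_{(p)}$ we have $d=D=p^sD_0$ and $D\in\Z_{(p)}^{\times}\iff s=0\iff p\nmid D$, and over $\F_p$ we have $d=\overline{D}$, which is a unit iff $p\nmid D$ and is $0$ iff $p\mid D$.

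First I would prove part~(\ref{enum:G1}). Apply (\ref{eq:EC6}) with $R=\Q$: since $\Q^{\times}\langle\theta_1\rangle$ should be replaced by $\Q^{\times}$ only in the non-starred group, one actually uses the unstarred isomorphism (\ref{eq:EC5}) $\Psi_{\Q}\colon G_{\Q}(f)\xrightarrow{\sim}\Q(\theta_1)^{\times}/\Q^{\times}$ in the irreducible case, and in the reducible case one notes that $f$ has no double root in $\Q$ (because $d=D\ne 0$), so Theorem~\ref{theo:Vf}(\ref{enum:Vf2i}) and (\ref{eq:EC8}) apply with $d\in\Q^{\times}$, giving $G_{\Q}(f)\xrightarrow{\sim}\Q^{\times}$ via $\left[\begin{smallmatrix}w_1\\w_0\end{smallmatrix}\right]\mapsto(w_1-w_0\theta_1)(w_1-w_0\theta_2)^{-1}$. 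Part~(\ref{enum:G2}) is the same argument with $R=\Z_{(p)}$, which is a DVR hence a local UFD; here in the reducible case one must split according to whether $d=D$ is a unit in $\Z_{(p)}$: if $p\nmid D$ then $D\in\Z_{(p)}^{\times}$ and (\ref{eq:EC8}) gives $\Z_{(p)}^{\times}$; if $p\mid D$ then $\sqrt{d}$ has valuation $s/2$, so $1+\sqrt{d}\,\Z_{(p)}=1+p^{s/2}\Z_{(p)}$ (here one should remark that reducibility of $f$ over $\Z_{(p)}$ forces $D\in\Z_{(p)}^2$, so $s$ is even and $\sqrt{d}$ generates the ideal $(p^{s/2})$), matching the stated formula. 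Part~(\ref{enum:G3i}) and (\ref{enum:G3ii}) are the specialization to $R=\F_p$, a field: if $f\bmod p$ is irreducible use (\ref{eq:EC5}); if it is reducible and $p\nmid D$ then $\overline{d}\in\F_p^{\times}$ and $f\bmod p$ has distinct roots, so (\ref{eq:EC8}) gives $\F_p^{\times}$; if $p\mid D$ then $f\bmod p$ has a double root $\theta$ and Theorem~\ref{theo:Vf}(\ref{enum:Vf2ii}) together with (\ref{eq:EC10}) gives $G_{\F_p}(f)\xrightarrow{\sim}\F_p$ via $\left[\begin{smallmatrix}w_1\\w_0\end{smallmatrix}\right]\mapsto -w_0(w_1-w_0\theta)^{-1}$.

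I do not expect a serious obstacle, since the theorem is essentially a bookkeeping consolidation of (\ref{eq:EC5})--(\ref{eq:EC11}). The one point requiring a little care is matching the ``reducible over $\Q$'' hypothesis with the trichotomy (irreducible / reducible without double root / reducible with double root) used in Theorem~\ref{theo:Vf}: over $\Q$ and over $\Z_{(p)}$ the assumption $D\ne 0$ (resp.\ $D\in\Q^{\times}$, $D\in\Z_{(p)}^{\times}$ is not needed, only $D\ne 0$) rules out the double-root case, so ``reducible'' there always means case~(\ref{enum:Vf2i}); but over $\F_p$ the double-root case genuinely occurs precisely when $p\mid D$, and one must be careful that ``reducible over $\F_p$'' in part~(\ref{enum:G3ii}) covers both $f\bmod p$ splitting into distinct linear factors and $f\bmod p$ being a square. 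The other minor subtlety, already flagged above, is that in the reducible-over-$\Z_{(p)}$ case with $p\mid D$ one must observe $D\in\Z_{(p)}^2$ to conclude $s$ is even and hence $p^{s/2}\Z_{(p)}=\sqrt{d}\,\Z_{(p)}$ makes sense. With these remarks in place the proof is just: invoke the relevant instance of (\ref{eq:EC5})--(\ref{eq:EC11}), and translate $R^{\times}\langle\theta_1\rangle$, $\langle\theta_1\theta_2^{-1}\rangle$, $1+\sqrt{d}\,R$ into the concrete objects $\Q^{\times}$, $\langle\theta_1\theta_2^{-1}\rangle$, $1+p^{s/2}\Z_{(p)}$, $\F_p$ named in the statement.
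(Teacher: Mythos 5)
Your proposal is correct and follows essentially the same route as the paper, which presents Theorem~\ref{theo:G} as a direct summary of the isomorphisms (\ref{eq:EC5})--(\ref{eq:EC11}) specialized to $R=\Q$, $\Z_{(p)}$, $\F_p$. The extra checks you flag (locality of $R$, $Q\in R^{\times}$, evenness of $s$ in the reducible case so that $\sqrt{d}\,\Z_{(p)}=p^{s/2}\Z_{(p)}$, and the trichotomy of root behaviour over $\F_p$) are exactly the points the paper leaves implicit, and you handle them correctly.
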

We also have the following theorem for the group $G_R^*(f)$ as well.
\begin{theo}\label{theo:G*} 
 Let $p$ be a prime number with $p\nmid Q$.
\begin{enumerate}[$(1)$]
\item If $f(t)$ is irreducible over $\Q$, then we have
\[
G^*_{\Q}(f)\tt \Q(\theta_1)^{\times}/\Q^{\times}\langle \theta_1 \rangle, \quad 
\left[ \begin{array}{c}
w_1\\
w_0\\
\end{array} \right]  \mapsto w_1-w_0\theta_1 \bmod{\Q^{\times}}\langle \theta_1 \rangle.
\]

If $f(t)$ is reducible over $\Q$, then we have 
\[
G_{\Q}^*(f)\tt \Q^{\times} /
\langle \theta_1 \theta_2^{-1} \rangle, \quad
\left[ \begin{array}{c}
w_1\\
w_0\\
\end{array} \right] \mapsto (w_1-w_0\theta_1)(w_1-w_0\theta_2)^{-1} \bmod{\langle \theta_1 \theta_2^{-1} \rangle}.
\]
\label{enum:G*1}
\item  If $f(t)$ is irreducible over $\Q$, then we have
\[
G_{\Z_{(p)}}^*(f)\tt \Z_{(p)}[\theta_1]^{\times}/\Z_{(p)}^{\times} \langle \theta_1 \rangle,\quad
\left[ \begin{array}{c}
w_1\\
w_0\\
\end{array} \right] \mapsto w_1-w_0\theta_1 \bmod{ \Z_{(p)}^{\times} \langle \theta_1 \rangle}.
\]

If $f(t)$ is reducible over $\Q$, then we have 
\begin{align*}
G_{\Z_{(p)}}^*(f) \tt
\begin{cases}
\Z_{(p)}^{\times}/\langle \theta_1\theta_2^{-1} \rangle & \text{if}\ p\nmid D,\\
(1+p^{\frac{s}{2}}\Z_{(p)})/\langle \theta_1 \theta_2^{-1} \rangle & \text{if}\ p|D,
\end{cases} \\
\quad \left[ \begin{array}{c}
w_1\\
w_0\\
\end{array} \right] \mapsto (w_1-w_0\theta_1)(w_1-w_0\theta_2)^{-1} \bmod{ \langle \theta_1 \theta_2^{-1} \rangle}.
\end{align*}
\label{enum:G*2}
\item 
\begin{enumerate}[$(i)$]
\item If $f(t) \bmod{p}$ is irreducible over $\F_p$, then we have
\[
G_{\F_p}^*(f)\tt \F_p(\theta_1)^{\times}/\F_p^{\times} \langle \theta_1 \rangle ,\quad 
\left[ \begin{array}{c}
w_1\\
w_0\\
\end{array} \right] \mapsto w_1-w_0 \theta_1 \bmod{\F_p^{\times} \langle \theta_1 \rangle}.
\]
 \label{enum:G*3i}
\item Assume that $f(t)$ is reducible over $\F_p$. 

 If $p\nmid D$, then we have 
\[
G_{\F_p}^*(f)\tt \F_p^{\times} /
\langle \theta_1 \theta_2^{-1} \rangle,\quad
\left[ \begin{array}{c}
w_1\\
w_0\\
\end{array} \right] \mapsto (w_1-w_0\theta_1)(w_1-w_0\theta_2)^{-1} \bmod{\langle \theta_1 \theta_2^{-1}\rangle}.
\]

If $p|D$, then we have
\[
G^*_{\F_p}(f) \tt  0.
\]
 \label{enum:G*3ii}
\end{enumerate}
\label{enum:G*3}
\end{enumerate}
\end{theo}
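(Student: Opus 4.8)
The plan is to deduce Theorem~\ref{theo:G*} from Theorem~\ref{theo:G} in exactly the same way that (\ref{eq:EC6}), (\ref{eq:EC9}) and (\ref{eq:EC11}) are deduced from (\ref{eq:EC5}), (\ref{eq:EC8}) and (\ref{eq:EC10}): the isomorphism $\pi$ of (\ref{eq:EC4}) identifies $G_R^*(f)$ with the quotient of $G_R(f)$ by the cyclic subgroup $\langle [\,{}^0_1\,] \rangle$, so one only has to track where this subgroup goes under the isomorphisms of Theorem~\ref{theo:G}. Concretely, I would organize the proof by the three cases of the theorem, applying Theorem~\ref{theo:G} with $R=\Q$, $R=\Z_{(p)}$, and $R=\F_p$ respectively, and in each case compute the image of the distinguished class $[\,{}^0_1\,]$.

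For part~(\ref{enum:G*1}) and part~(\ref{enum:G*2}): in the irreducible case, $\Psi_R$ of (\ref{eq:EC5}) sends $[\,{}^0_1\,]$ to $\theta_1 \bmod R^\times$ (this is recorded just before (\ref{eq:EC6})), so modding out yields $G_R^*(f) \cong R[\theta_1]^\times / R^\times\langle\theta_1\rangle$ with $R=\Q$ or $\Z_{(p)}$, which is precisely the claim. In the reducible case with $d$ nonzero and not a square giving a double root, (\ref{eq:EC8}) sends $[\,{}^0_1\,]$ to $\theta_1\theta_2^{-1}$, and modding out gives the stated quotients — for $R=\Q$ this is $\Q^\times/\langle\theta_1\theta_2^{-1}\rangle$, and for $R=\Z_{(p)}$ it is $\Z_{(p)}^\times/\langle\theta_1\theta_2^{-1}\rangle$ if $p\nmid D$ and $(1+p^{s/2}\Z_{(p)})/\langle\theta_1\theta_2^{-1}\rangle$ if $p\mid D$, matching (\ref{eq:EC9}). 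Note that over $\Q$ or $\Z_{(p)}$ the characteristic polynomial never has a double root, since $D\ne 0$; so the double-root subcase of Theorem~\ref{theo:Vf} does not intervene here.

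For part~(\ref{enum:G*3}) with $R=\F_p$: the two subcases $f\bmod p$ irreducible and $f\bmod p$ reducible-with-distinct-roots go exactly as above, using (\ref{eq:EC6}) and (\ref{eq:EC9}) with $R=\F_p$, noting that $\sqrt{d}\in\F_p^\times$ when $p\nmid D$ so $H_{\F_p}^\times=\F_p^\times\times\F_p^\times$ and the quotient of Theorem~\ref{theo:G}(\ref{enum:G3ii}) is just $\F_p^\times$. The new phenomenon is the case $p\mid D$: then $f\bmod p$ has a double root $\theta\in\F_p$, so Theorem~\ref{theo:G}(\ref{enum:G3})(\ref{enum:G3ii}) gives $G_{\F_p}(f)\cong\F_p$ via $\Psi_{\F_p}\colon[\,{}^{w_1}_{w_0}\,]\mapsto -w_0(w_1-w_0\theta)^{-1}$. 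By (\ref{eq:EC10})–(\ref{eq:EC11}) the subgroup $\langle[\,{}^0_1\,]\rangle$ maps to $\langle\theta^{-1}\rangle$, the cyclic subgroup of $(\F_p,+)$ generated by $\theta^{-1}$; but $\theta$ is the double root of $t^2-Pt+Q\bmod p$, so $2\theta\equiv P$ and $\theta^2\equiv Q\not\equiv 0\bmod p$, hence $\theta\ne 0$ in $\F_p$, so $\theta^{-1}$ is a nonzero element of the additive group $\F_p$ and therefore generates all of $\F_p$. Thus $G_{\F_p}^*(f)\cong\F_p/\F_p=0$, giving the last assertion.

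I expect the argument to be essentially bookkeeping rather than to contain a genuine obstacle; the one point that needs a word of care is the very last step — verifying $\theta\ne 0$ in $\F_p$ so that $\theta^{-1}$ generates $(\F_p,+)$ — which uses $p\nmid Q$ together with $\theta^2=Q$. One should also remark explicitly that over $\Q$ and $\Z_{(p)}$ the hypothesis $D\ne 0$ rules out the double-root situation, so that the case division in Theorem~\ref{theo:G*} parts~(1) and~(2) is genuinely into "irreducible" versus "reducible with distinct roots," and no third subcase is lost. With those remarks in place, the proof is a direct transcription through $\pi$ of the isomorphisms already established.
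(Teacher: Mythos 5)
Your proposal is correct and is essentially the paper's own argument: the paper obtains Theorem~\ref{theo:G*} by ``summarizing the above discussion,'' i.e.\ by passing the isomorphisms of Theorem~\ref{theo:G} to the quotient by the class of $\begin{pmatrix}0\\1\end{pmatrix}$ via (\ref{eq:EC4}), which is exactly the derivation of (\ref{eq:EC6}), (\ref{eq:EC9}) and (\ref{eq:EC11}) specialized to $R=\Q$, $\Z_{(p)}$ and $\F_p$. Your added checks --- that $D\ne 0$ excludes the double-root case over $\Q$ and $\Z_{(p)}$, and that $\theta\ne 0$ in $\F_p$ (from $\theta^2=Q$ and $p\nmid Q$) so that $\theta^{-1}$ generates $(\F_p,+)$ and hence $G^*_{\F_p}(f)=0$ when $p\mid D$ --- are precisely the points implicit in the paper's discussion.
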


\section{Subgroups of $G_{\Q}(f)$ and $G_{\Q}^*(f)$ }\label{sec:SubG}

In this section, we define natural subgroups $G(f,p),\ K(f,p),\ H(f,p)$ 
of $G_{\Q}(f)$, and $G^*(f,p),\ K^*(f,p),\ H^*(f,p)$ of $G_{\Q}^*(f)$. The definitions of
these groups give natural interpretation of Laxton's subgroups $G^*(f,p)_L,\ K^*(f,p)_L$ and
$H^*(f,p)_L$ in \S~\ref{sec:LG}.

Let $p$ be a prime number with $p\nmid Q$. Consider the projective line over $\F_p$:
\[
\P^1(\F_p) :=\F_p^2/\sim =\left\{ \left[
\begin{array}{c}
a_1\\
a_0\\
\end{array} \right] \in \F_p^2 \ \vline \ a_0\in \F_p^{\times} \ \text{or} \ a_1\in \F_p^{\times} \right\},
\]
where $\left[
\begin{array}{c}
a_1\\
a_0\\
\end{array} \right]=\left[
\begin{array}{c}
b_1\\
b_0\\
\end{array} \right]$ if and only if there exists $c\in \F_p^{\times}$ such that
$a_0=cb_0,\ a_1=cb_1$. For any class $\left[
\begin{array}{c}
w_1\\
w_0\\
\end{array} \right] \in G_{\Q}(f)$, we can choose the representative 
$\begin{pmatrix}
w_1\\
w_0\\
\end{pmatrix}$ so that $w_0,w_1 \in \Z$ and $(w_0,w_1)=1$. Therefore, the reduction map
\begin{equation}\label{eq:SubG1}
\mathrm{red}_p:\ G_{\Q}(f) \longrightarrow \P^1(\F_p), \quad 
\left[
\begin{array}{c}
w_1\\
w_0\\
\end{array} \right] \mapsto \left[
\begin{array}{c}
w_1\\
w_0\\
\end{array} \right] 
\end{equation}
is well-defined. The group $G_{\F_p}(f) \ \left(\simeq V_f(\F_p)^{\times}/\underset{ \F_p}{\sim} \right)$
is a subset of $\P^1(\F_p)$.
\begin{df}\label{df:GKH}
Define subsets $G(f,p),\ K(f,p)$ and $H(f,p)$ of $G_{\Q}(f)$ by
\begin{align*}
G(f,p) & := \left\{ 
 \left[
\begin{array}{c}
w_1\\
w_0\\
\end{array} \right]  \in G_{\Q} (f) \ \vline \ \mathrm{red}_p \left(  \left[
\begin{array}{c}
w_1\\
w_0\\
\end{array} \right] \right) = \left[
\begin{array}{c}
1\\
0\\
\end{array} \right] \right\},\\
K(f,p) & := \left\{ 
 \left[
\begin{array}{c}
w_1\\
w_0\\
\end{array} \right]  \in G_{\Q} (f) \ \vline \ \mathrm{red}_p \left(  \left[
\begin{array}{c}
w_1\\
w_0\\
\end{array} \right] \right) \in G_{\F_p}(f)  \right\},\\
H(f,p) & := \left\{ 
 \left[
\begin{array}{c}
w_1\\
w_0\\
\end{array} \right]  \in G_{\Q} (f) \ \vline \ \mathrm{red}_p \left(  \left[
\begin{array}{c}
w_1\\
w_0\\
\end{array} \right]^n \right) = \left[
\begin{array}{c}
1\\
0\\
\end{array} \right]\ \text{ for some} \ n\in \Z \right\}.
\end{align*}
\end{df}
From the definition of $K(f,p)$, we can see that $K(f,p)$ is a subgroup of $G_{\Q}(f)$.
The reduction map (\ref{eq:SubG1}) induces
the group homomorphism
\begin{equation}\label{eq:SubG2}
\mathrm{red}_p:\ K(f,p)  \longrightarrow G_{\F_p}(f), \quad 
\left[
\begin{array}{c}
w_1\\
w_0\\
\end{array} \right] \mapsto \left[
\begin{array}{c}
w_1\\
w_0\\
\end{array} \right].
\end{equation}
The set $G(f,p)$ is a subgroup of $K(f,p)$ because $\left[
\begin{array}{c}
1\\
0\\
\end{array} \right]$ is the identity element of $G_{\F_p}(f)$. The set $H(f,p)$ is a subgroup of $G_{\Q}(f)$ since
$G(f,p)$ is a group and $\left[
\begin{array}{c}
w_1\\
w_0\\
\end{array} \right] \in H(f,p)$ if and only if
$
\left[
\begin{array}{c}
w_1\\
w_0\\
\end{array} \right]^n \in G(f,p)$ for some $n\in \Z$. Furthermore, we have $K(f,p) \subset H(f,p)$ since the image of 
$K(f,p)$ by the reduction map is in the finite group $G_{\F_p}(f)$.
We get the following sequence of subgroups.
\begin{equation}\label{eq:SubG3}
G(f,p) \leq K(f,p) \leq H(f,p)\leq G_{\Q}(f)
\end{equation}
By the definition of the subgroups, we have an exact sequence of groups:
\begin{equation}\label{eq:SubG4}
1 \longrightarrow G(f,p) \longrightarrow K(f,p) \underset{ \mathrm{red}_p}{\longrightarrow}  G_{\F_p}(f) \longrightarrow 1.
\end{equation}
This exact sequence is an analogue of the elliptic curve exact sequence (\cite[VII, Proposition 2.1]{Si}).

Next, we define the corresponding subgroups of $G_{\Q}^*(f)$ by the natural map $\pi :\ G_{\Q} (f) \to G_{\Q}^*(f)$.
\begin{df}\label{df:GKH*}
Define subsets $G^*(f,p),\ K^*(f,p)$ and $H^*(f,p)$ of $G_{\Q}^*(f)$ by
\begin{align*}
G^*(f,p) & :=\pi (G(f,p)),\\
K^*(f,p) & :=\pi (K(f,p)),\\
H^*(f,p) &:=\pi (H(f,p)).
\end{align*}
\end{df}
These subsets are subgroups of $G_{\Q}^*(f)$ since the map $\pi$ is a group homomorphism and we get the following sequence of
subgroups:
\begin{equation}\label{eq:SubG5}
G^*(f,p) \leq K^*(f,p) \leq H^*(f,p)\leq G^*_{\Q}(f).
\end{equation}
Let $p$ be a prime number, and assume $p\nmid Q$. 
For the Lucas sequence ${\mathcal F}=({\mathcal F}_n) \in \mathscr{S}(f,\Q)$,
we have ${\mathcal F}_n \in \Z_{(p)}$ for any $n\in \Z$. 
 Lucas showed that there exists a positive integer $n$ 
satisfying $p|{\mathcal F}_n$ in this case
(\cite[\S 24, 25]{Lu}, \cite[Lemma~2, Theorem~12]{C}, \cite[IV.18, IV.19 and p67]{R}).
\begin{df}\label{df:rank}
Assume  $p\nmid Q$. We denote {\it the rank} of the Lucas sequence ${\mathcal F}=({\mathcal F}_n) \in \mathscr{S}(f,\Q)$ by
$r(p)$. Namely, it is the smallest positive integer $n$ satisfying $p|{\mathcal F}_n$.
\end{df}
We can easily check $r(2)=2$ if $P$ is even, and $r(2)=3$ if $P$ is odd. If $p\ne 2$, then we know that $r(p)$ divides
$p-\left( \dfrac{D}{p} \right) $ from the results of Lucas, where $\left( \dfrac{*}{*} \right)$ is the Legendre symbol.
\begin{lem}\label{lem:orderF}
Assume $p\nmid Q$. Let $r(p)$ be the rank of the Lucas sequence ${\mathcal F}=({\mathcal F}_n)$. Then
$r(p)$ is equal to the order of 
$
\left[ \begin{array}{c}
0\\
1\\
\end{array} \right] \in G_{\F_p}(f)$.
\end{lem}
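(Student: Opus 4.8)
The plan is to unwind both sides through the identification of $G_{\F_p}(f)$ with a quotient of $V_f(\F_p)^\times$ and to compare powers of $\left[\begin{smallmatrix} 0\\1\end{smallmatrix}\right]$ with the reductions of the Lucas sequence. First I would recall that the class $\left[\begin{smallmatrix}0\\1\end{smallmatrix}\right]\in G_{\F_p}(f)$ corresponds, under the bijection $V_f(\F_p)^\times/\underset{\F_p}{\sim}{}^*\;\tt\; G_{\F_p}(f)$ of \eqref{eq:EC2}, to the equivalence class of the Lucas sequence $\mathcal F=(\mathcal F_n)$, since $\mathcal F_0=0$ and $\mathcal F_1=1$ so that $\phi_{\F_p}(\mathcal F)=\left(\begin{smallmatrix}\mathcal F_1\\\mathcal F_0\end{smallmatrix}\right)=\left(\begin{smallmatrix}1\\0\end{smallmatrix}\right)$—wait, that is the identity, so in fact I would use $\left[\begin{smallmatrix}0\\1\end{smallmatrix}\right]^{-1}=\left[\begin{smallmatrix}P\\1\end{smallmatrix}\right]$ and \eqref{eq:EC1}, which gives ${\mathcal B}^n\left[\begin{smallmatrix}1\\0\end{smallmatrix}\right]=\left[\begin{smallmatrix}0\\1\end{smallmatrix}\right]^{-n}$. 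Since ${\mathcal B}^n\left(\begin{smallmatrix}1\\0\end{smallmatrix}\right)=\left(\begin{smallmatrix}w_{n+1}\\w_n\end{smallmatrix}\right)$ for the sequence with $(w_1,w_0)=(1,0)$, that sequence is precisely $\mathcal F$, so ${\mathcal B}^n\left[\begin{smallmatrix}1\\0\end{smallmatrix}\right]=\left[\begin{smallmatrix}\mathcal F_{n+1}\\\mathcal F_n\end{smallmatrix}\right]$ in $V_f(\F_p)^\times/\ss$. Hence the order of $\left[\begin{smallmatrix}0\\1\end{smallmatrix}\right]$ in $G_{\F_p}(f)$ is the smallest positive $n$ with $\left[\begin{smallmatrix}\mathcal F_{n+1}\\\mathcal F_n\end{smallmatrix}\right]=\left[\begin{smallmatrix}1\\0\end{smallmatrix}\right]$ in $V_f(\F_p)^\times/\ss$, i.e. the smallest positive $n$ for which there is $\lambda\in\F_p^\times$ with $\mathcal F_n\equiv 0$ and $\mathcal F_{n+1}\equiv\lambda\pmod p$.

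The second step is to show this condition is equivalent to $p\mid\mathcal F_n$ alone, which pins down $n=r(p)$ by Definition~\ref{df:rank}. One direction is trivial. For the converse, if $p\mid\mathcal F_n$ I must check $\mathcal F_{n+1}\not\equiv 0\pmod p$, for otherwise the reduced vector $\left(\begin{smallmatrix}\mathcal F_{n+1}\\\mathcal F_n\end{smallmatrix}\right)$ would be the zero vector, which is impossible: by \eqref{eq:LR9} we have $\Lambda(\mathcal F_{n+1},\mathcal F_n)=Q^n\,\Lambda(\mathcal F_1,\mathcal F_0)=Q^n\cdot 1$, and since $p\nmid Q$ this is a $p$-adic unit, so the reduction $\left(\begin{smallmatrix}\mathcal F_{n+1}\\\mathcal F_n\end{smallmatrix}\right)$ lands in $V_f(\F_p)^\times$ and in particular is nonzero. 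Thus $p\mid\mathcal F_n$ forces $\mathcal F_{n+1}$ to be a unit mod $p$, so with $\lambda=\mathcal F_{n+1}\bmod p$ we indeed get $\left[\begin{smallmatrix}\mathcal F_{n+1}\\\mathcal F_n\end{smallmatrix}\right]=\left[\begin{smallmatrix}\lambda\\0\end{smallmatrix}\right]=\left[\begin{smallmatrix}1\\0\end{smallmatrix}\right]$ in $V_f(\F_p)^\times/\ss$.

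Putting the two steps together: the order of $\left[\begin{smallmatrix}0\\1\end{smallmatrix}\right]\in G_{\F_p}(f)$ is the least positive $n$ with $p\mid\mathcal F_n$, which is exactly $r(p)$. The main obstacle—though it is a mild one—is being careful that the equivalence relation $\ss$ on $V_f(\F_p)$ is by $\F_p^\times$-scaling and that the relevant vectors are genuinely invertible, so that "equal to $\left[\begin{smallmatrix}1\\0\end{smallmatrix}\right]$ in the quotient" really does mean "second coordinate $\equiv 0$, first coordinate a unit"; the unit statement is supplied by the norm computation \eqref{eq:LR9} together with $Q\in\Z_{(p)}^\times$. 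Everything else is bookkeeping with \eqref{eq:EC1} and \eqref{eq:LR3}.
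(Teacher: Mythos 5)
Your proof is correct and follows essentially the same route as the paper: identify $\left[\begin{smallmatrix}0\\1\end{smallmatrix}\right]^{-n}$ with ${\mathcal B}^n\left[\begin{smallmatrix}1\\0\end{smallmatrix}\right]=\left[\begin{smallmatrix}\mathcal F_{n+1}\\\mathcal F_n\end{smallmatrix}\right]$ via \eqref{eq:EC1} and \eqref{eq:LR3}, then read off the order as the least $n$ with $p\mid\mathcal F_n$. Your extra step verifying via \eqref{eq:LR9} that $\mathcal F_{n+1}$ is a unit mod $p$ whenever $p\mid\mathcal F_n$ is a detail the paper leaves implicit, and it is a worthwhile addition.
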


\begin{proof}
By the definition, $r(p)$ is the smallest positive integer $n$ satisfying
\[
\left[ \begin{array}{c}
1\\
0\\
\end{array} \right] =\left[ \begin{array}{c}
{\mathcal F}_{n+1} \\
{\mathcal F}_n \\
\end{array} \right] ={\mathcal B}^n\left[ \begin{array}{c}
{\mathcal F}_1\\
{\mathcal F}_0\\
\end{array} \right] 
={\mathcal B}^n\left[ \begin{array}{c}
1\\
0\\
\end{array} \right] 
\]
in $G_{\F_p}(f)$.
Furthermore, we get ${\mathcal B}^n \left[ \begin{array}{c}
1\\
0\\
\end{array} \right] =
\left[ \begin{array}{c}
0\\
1\\
\end{array} \right]^{-n}$ from (\ref{eq:EC1}). Therefore, $r(p)$ is equal to the order of $
\left[ \begin{array}{c}
0\\
1\\
\end{array} \right] $ in $G_{\F_p}(f)$.
\qed
\end{proof}

\begin{lem}\label{lem:G(f,p)}
Let $r(p)$ be the rank of the Lucas sequence ${\mathcal F}=({\mathcal F}_n)$. Then we have
\[
G(f,p) \cap  {\tiny \langle \left[
\begin{array}{c}
0\\
1\\
\end{array} \right] \rangle } = {\tiny
\langle \left[
\begin{array}{c}
0\\
1\\
\end{array} \right]^{r(p)} \rangle.}
\]
\end{lem}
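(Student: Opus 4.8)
The plan is to deduce the statement directly from the exact sequence (\ref{eq:SubG4}) and Lemma~\ref{lem:orderF}, using only the fact that $\mathrm{red}_p$ restricts to a group homomorphism on $K(f,p)$ with kernel $G(f,p)$.

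First I would check that the cyclic subgroup $\left\langle \left[\begin{smallmatrix}0\\1\end{smallmatrix}\right]\right\rangle$ is contained in $K(f,p)$. For the representative $\begin{pmatrix}0\\1\end{pmatrix}$ we have $\Lambda(0,1)=N(-t)=Q$, which is a unit in $\Z_{(p)}$ because $p\nmid Q$; hence $\mathrm{red}_p\!\left(\left[\begin{smallmatrix}0\\1\end{smallmatrix}\right]\right)=\left[\begin{smallmatrix}0\\1\end{smallmatrix}\right]$ lies in $V_f(\F_p)^{\times}/\underset{\F_p}{\sim}\;=G_{\F_p}(f)$, so $\left[\begin{smallmatrix}0\\1\end{smallmatrix}\right]\in K(f,p)$. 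Since $K(f,p)$ is a subgroup of $G_{\Q}(f)$ (the remark after Definition~\ref{df:GKH}), all powers $\left[\begin{smallmatrix}0\\1\end{smallmatrix}\right]^{k}$ lie in $K(f,p)$, and by (\ref{eq:SubG4}) the map $\mathrm{red}_p\colon K(f,p)\to G_{\F_p}(f)$ is a homomorphism whose kernel is $G(f,p)$.

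Next, for $k\in\Z$ the homomorphism property gives $\mathrm{red}_p\!\left(\left[\begin{smallmatrix}0\\1\end{smallmatrix}\right]^{k}\right)=\left[\begin{smallmatrix}0\\1\end{smallmatrix}\right]^{k}$ in $G_{\F_p}(f)$. Therefore $\left[\begin{smallmatrix}0\\1\end{smallmatrix}\right]^{k}\in G(f,p)$ if and only if $\left[\begin{smallmatrix}0\\1\end{smallmatrix}\right]^{k}$ is the identity $\left[\begin{smallmatrix}1\\0\end{smallmatrix}\right]$ of $G_{\F_p}(f)$, which by Lemma~\ref{lem:orderF} happens exactly when $r(p)\mid k$. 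Consequently $G(f,p)\cap\left\langle\left[\begin{smallmatrix}0\\1\end{smallmatrix}\right]\right\rangle=\{\left[\begin{smallmatrix}0\\1\end{smallmatrix}\right]^{k}:r(p)\mid k\}=\left\langle\left[\begin{smallmatrix}0\\1\end{smallmatrix}\right]^{r(p)}\right\rangle$, as claimed; note this argument is insensitive to whether $\left[\begin{smallmatrix}0\\1\end{smallmatrix}\right]$ has finite or infinite order in $G_{\Q}(f)$.

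There is essentially no obstacle here: the only points needing care are the two small verifications above — that $\left[\begin{smallmatrix}0\\1\end{smallmatrix}\right]$ reduces to a \emph{unit} class in $V_f(\F_p)^{\times}$ (this is exactly where $p\nmid Q$ is used, so that $\mathrm{red}_p$ is defined and multiplicative on the relevant subgroup), and that the identity of $G_{\F_p}(f)$ is $\left[\begin{smallmatrix}1\\0\end{smallmatrix}\right]$, matching the definition of $G(f,p)$ in Definition~\ref{df:GKH}.
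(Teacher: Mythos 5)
Your proof is correct and follows essentially the same route as the paper, which simply cites Lemma~\ref{lem:orderF}: the intersection is the kernel of $\mathrm{red}_p$ restricted to the cyclic subgroup generated by $\left[\begin{smallmatrix}0\\1\end{smallmatrix}\right]$, and that kernel is determined by the order $r(p)$ of its image in $G_{\F_p}(f)$. Your additional verifications (that $\Lambda(0,1)=Q$ is a $p$-unit so the class lies in $K(f,p)$, and that the argument is independent of the order of the class in $G_{\Q}(f)$) are exactly the routine details the paper leaves implicit.
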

\begin{proof}
The assertion follows from Lemma~\ref{lem:orderF}.
\qed
\end{proof}

We see that $\left[ \begin{array}{c}
0\\
1\\
\end{array} \right] \in K(f,p)$ from $\Lambda (0,1) =Q \not\equiv 0 \pmod{p}$,
hence we get the following isomorphisms by Lemmas~\ref{lem:ImPi} and \ref{lem:G(f,p)}.
\begin{align}
G^*(f,p) & \overset{\sim}{ \underset{\pi}{\longleftarrow}}
G(f,p) /{\tiny  \left\langle \left[ \begin{array}{c}
0\\
1\\
\end{array} \right]^{r(p)} \right\rangle },
\\
K^*(f,p)  & \overset{\sim}{ \underset{\pi}{\longleftarrow}}
K(f,p)/ {\tiny  \left\langle \left[ \begin{array}{c}
0\\
1\\
\end{array} \right] \right\rangle }, \notag \\
H^*(f,p)  & \overset{\sim}{ \underset{\pi}{\longleftarrow}}
H(f,p)/ {\tiny  \left\langle \left[ \begin{array}{c}
0\\
1\\
\end{array} \right] \right\rangle }. \notag 
\end{align}
The exact sequence (\ref{eq:SubG4}) yields the exact sequence of groups:
\begin{equation}\label{eq:SubG8}
1 \longrightarrow G^*(f,p) \longrightarrow K^*(f,p) \underset{ \mathrm{red}_p}{\longrightarrow}  G^*_{\F_p}(f) \longrightarrow 1.
\end{equation}

Let $G^*(f)$ be the Laxton group and $G^*(f,p)_L, K^*(f,p)_L, H^*(f,p)_L$ be the subgroups
defined in \S 1.
For the natural isomorphism of groups $\iota: G^*(f) \to G^*_{\Q}(f), \ {\mathcal W} \mapsto {\mathcal W}$, our subgroups $G^*(f,p),\ K^*(f,p)$ and $H^*(f,p)$ in Definition~\ref{df:GKH*} correspond to $G^*(f,p)_L,\ K^*(f,p)_L$ and
$H^*(f,p)_L$, respectively. 

\begin{theo}\label{theo:main2}
Let $G^*(f)$ be the Laxton group and $G^*(f,p)_L,\ K^*(f,p)_L,
H^*(f,p)_L$ be the subgroups defined in \S~\ref{sec:LG}. By the natural group isomorphism 
of groups $\iota: G^*(f) \to G^*_{\Q}(f)$, we have
$\iota (G^*(f,p)_L)=G^*(f,p),\ \iota (K^*(f,p)_L)=K^*(f,p)$ and $\iota (H^*(f,p)_L)=H^*(f,p)$. 
Furthermore, we have the following exact sequence of groups:
\[
1 \longrightarrow G^*(f,p) \longrightarrow K^*(f,p) \underset{ \mathrm{red}_p}{\longrightarrow}  G^*_{\F_p}(f) \longrightarrow 1,
\]
where the map $\mathrm{red}_p$ is the reduction map defined by {\small $\left[ 
\begin{array}{c}
w_1\\
w_0\\
\end{array} \right] \mapsto \left[ 
\begin{array}{c}
w_1\\
w_0\\
\end{array} \right]$, 
}
 where $\begin{pmatrix}
w_1\\
w_0\\
\end{pmatrix}$ is a representative  such that 
$w_0,w_1 \in \Z$ and $(w_0,w_1)=1$.
\end{theo}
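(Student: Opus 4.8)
The plan is to recognise the theorem as a translation exercise: the exact sequence is exactly (\ref{eq:SubG8}) and the isomorphisms of \S\ref{sec:SubG} are already in place, so what must be proved is that Laxton's set-theoretic subgroups of \S\ref{sec:LG} match the reduction-theoretic ones of Definitions~\ref{df:GKH} and \ref{df:GKH*}. First I would record that $\iota$ is a group isomorphism: every class of $G^*_\Q(f)$ is represented by a sequence $(w_n)$ with $w_0,w_1\in\Z$ and $\gcd(w_0,w_1)=1$ (clear denominators and divide by the gcd; this is possible since $\Lambda(w_1,w_0)\ne 0$ rules out $w_0=w_1=0$), hence by a member of Laxton's defining set, while conversely every member of that set has $\Lambda\ne 0$ and so lies in $\mathscr{S}(f,\Q)^{\times}$, and Laxton's $\sim^*$ is literally the relation $\s$ of Definition~\ref{df:eq}; that the two product laws agree is Theorem~\ref{theo:main1} (see also Remark~\ref{rem:LG}). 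Henceforth I identify $G^*(f)$ with $G^*_\Q(f)$ and fix, for each class $\mathscr{W}$, its representative $\binom{w_1}{w_0}$ with $w_0,w_1\in\Z$, $\gcd(w_0,w_1)=1$, which is unique up to sign and the shift action of $\langle\mathcal{B}\rangle$. Write $v_p$ for the $p$-adic valuation.

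The one substantive point is a \emph{valuation lemma}: if $\mathbf{v}=(v_n)$ lies in the $\s$-class of $\mathscr{W}$ and is eventually integral (so that $\mathbf{v}$ belongs to Laxton's set), then writing $v_n=\lambda w_{n+\nu}$ for suitable $\lambda\in\Q^{\times}$, $\nu\in\Z$ one has $v_p(\lambda)\ge 0$. Indeed, writing $\lambda=a/b$ in lowest terms, eventual integrality of $\mathbf{v}$ forces $b\mid w_m$ for all large $m$; if $p\mid b$ then $p\mid w_m$ for all large $m$, and since $p\nmid Q$ the backward recurrence $w_m=Q^{-1}(Pw_{m+1}-w_{m+2})$ propagates this downward to every $m$, contradicting $\gcd(w_0,w_1)=1$. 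Hence $p\nmid b$ and $v_p(\lambda)\ge 0$. In particular $\Lambda(v_1,v_0)=\lambda^2 Q^{\nu}\Lambda(w_1,w_0)$ has $p$-valuation $2v_p(\lambda)+v_p(\Lambda(w_1,w_0))$, a sum of two nonnegative integers.

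Now the three identifications. By Lemmas~\ref{lem:ImPi} and \ref{lem:G(f,p)} together with (\ref{eq:LR3}) and (\ref{eq:EC1}), a class $\mathscr{W}$ lies in $G^*(f,p)$ precisely when, for some $\nu\in\Z$, the reduction mod $p$ of $\mathcal{B}^{\nu}\binom{w_1}{w_0}$ equals $\binom{1}{0}$ in $\P^1(\F_p)$, i.e. precisely when $p\mid w_n$ for some $n$ (consecutive $w_n$'s are never simultaneously divisible by $p$, as $\det\mathcal{B}=Q$ is prime to $p$). If $\mathscr{W}\in G^*(f,p)_L$, applying this to $\mathbf{v}=\binom{w_1}{w_0}$ gives $p\mid w_n$ for some $n$; conversely if $p\mid w_n$ and $\mathbf{v}=(\lambda w_{n+\nu})$ is any eventually integral representative, the valuation lemma gives $v_p(\lambda w_n)\ge 1$, so $p\mid\mathbf{v}$, whence $\mathscr{W}\in G^*(f,p)_L$. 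For $K$: by (\ref{eq:LR9}) the $p$-valuation of $\Lambda(w_{n+1},w_n)=Q^n\Lambda(w_1,w_0)$ is independent of the shift, and by Lemma~\ref{lem:ImPi} one has $\mathscr{W}\in K^*(f,p)$ iff $p\nmid\Lambda(w_1,w_0)$ for the fixed coprime representative; this places $K^*(f,p)$ inside $K^*(f,p)_L$ trivially, and for the reverse inclusion an integral representative $\mathbf{v}$ with $p\nmid\Lambda(v_1,v_0)$ forces, by the computation ending the previous paragraph, both $v_p(\lambda)=0$ and $p\nmid\Lambda(w_1,w_0)$. Finally, $H^*(f,p)$ and $H^*(f,p)_L$ are, by their definitions, the sets of classes a power of which lies in $G^*(f,p)$, respectively $G^*(f,p)_L$ (for $H^*(f,p)=\pi(H(f,p))$ this uses that $H(f,p)$ is the group of elements with a power in $G(f,p)$ and contains $\langle\binom{0}{1}\rangle$), so the $H$-case follows from the $G$-case and the fact that $\iota$ is a group homomorphism.

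With the three identifications in hand, the asserted exact sequence is (\ref{eq:SubG8}) transported through $\iota$, and the stated description of $\mathrm{red}_p$ is (\ref{eq:SubG2}) followed by $\pi$. The only real obstacle is the valuation lemma and, more generally, keeping straight the interplay between the ``eventually integral'' representatives permitted in Laxton's definitions and the coprime integral representative underlying $\mathrm{red}_p$; everything else is formal manipulation of results from \S\S\ref{sec:LR}--\ref{sec:SubG}.
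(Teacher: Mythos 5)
Your proposal is correct and takes essentially the same route as the paper: the paper's proof also fixes the coprime integral representative $\binom{w_1}{w_0}$ and uses $\binom{w_{n+1}}{w_n}=\mathcal{B}^{n}\binom{w_1}{w_0}$ with $\det\mathcal{B}=Q\in\Z_{(p)}^{\times}$ to propagate $p$-primitivity, and it likewise reduces the exact sequence to (\ref{eq:SubG8}). In fact you are more thorough than the paper, which only writes out the inclusion $\iota(G^*(f,p)_L)\subset G^*(f,p)$, declares the reverse inclusion ``obvious'' and the $K$- and $H$-identifications ``already proved''; your valuation lemma ($v_p(\lambda)\ge 0$ for any eventually integral representative, via backward propagation of $p\mid w_m$ using $p\nmid Q$) is precisely the point needed to justify those omitted steps.
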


\begin{proof}
We only give the proof of $\iota (G^*(f,p)_L)=G^*(f,p)$, because the other assertions have already been proved.
Let ${\mathcal W} \in G^*(f,p)_L$.
We can choose ${\bf w} =(w_n) \in {\mathcal W}$ such that $w_0, w_1 \in \Z $ and at least one of them is
not divisible by $p$. Since ${\bf w}$ is 
a representative of ${\mathcal W} \in G^*(f,p)_L$, we have $w_n \in p\Z_{(p)}$ for some $n$. For ${\mathcal B}=\begin{pmatrix}
P & -Q\\
1 & 0\\
\end{pmatrix}$, we have
\[
\begin{pmatrix}
w_1\\
w_0\\
\end{pmatrix} ={\mathcal B}^{-n} 
\begin{pmatrix}
w_{n+1} \\
w_n\\
\end{pmatrix},
\]
and hence $w_{n+1} \in \Z_{(p)}^{\times}$ because of $p\nmid w_0$ or $p\nmid w_1$. We conclude that $\iota
({\mathcal W}) \in G^*(f,p)$, and we have
$\iota (G^*(f,p)_L) \subset G^*(f,p)$.
The opposite inclusion relation $\iota (G^*(f,p)_L) \supset G^*(f,p)$ is obvious. 
\qed
\end{proof}

\section{Group strctures of $K(f,p)$ and $K^*(f,p)$}\label{sec:K}
In this section, we determine the group structures of $K(f,p)$ and $K^*(f,p)$ defined in Definitions~\ref{df:GKH} and
\ref{df:GKH*}.
Let $p$ be a prime number with $p\nmid Q$.
\begin{lem}\label{lem:KK*} 
We have the following group isomorphisms.
\begin{enumerate}[$(1)$]
\item
\[
\rho:\ G_{\Z_{(p)}}(f)\tt
 K(f,p), \quad \left[ \begin{array}{c}
 w_1\\
 w_0\\
 \end{array} \right] \mapsto \left[
 \begin{array}{c}
 w_1\\
 w_0\\
 \end{array} \right]
\]
\label{enum:KK*1}
\item
\[
 \rho* :\ G^*_{\Z_{(p)}}(f)
  \tt
 K^*(f,p), \quad 
 \left[ \begin{array}{c}
 w_1\\
 w_0\\
 \end{array} \right]  \bmod{  \langle {\tiny \left[ \begin{array}{c}
0\\
1\\
\end{array} \right] } \rangle   } \mapsto 
  \left[ \begin{array}{c}
 w_1\\
 w_0\\
 \end{array} \right]  \bmod{  \langle {\tiny \left[ \begin{array}{c}
0\\
1\\
\end{array} \right] } \rangle   }.
\]
\label{enum:KK*2}
\end{enumerate}
\end{lem}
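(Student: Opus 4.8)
The plan is to build $\rho$ as the map induced on $\underset{\Z_{(p)}}{\sim}$- and $\underset{\Q}{\sim}$-classes by the inclusion $\Z_{(p)}\hookrightarrow\Q$, prove that it is an injective group homomorphism whose image is exactly $K(f,p)$, and then obtain $\rho^*$ formally by passing to the quotients of both sides by the cyclic subgroup generated by the class of $\binom{0}{1}$. Since $\Z_{(p)}$ is a UFD with $Q\in\Z_{(p)}^{\times}$, all of \S\ref{sec:LR}--\S\ref{sec:EC} applies to it; because the ring map $\Z_{(p)}[t]/(f)\hookrightarrow\Q[t]/(f)$ is compatible with the multiplication formula (\ref{eq:LR10}) and $\Z_{(p)}^{\times}\subseteq\Q^{\times}$, sending a class $\left[\begin{smallmatrix}w_1\\w_0\end{smallmatrix}\right]\in G_{\Z_{(p)}}(f)$ to the class with the same representative in $G_{\Q}(f)$ is well defined and is a group homomorphism $\rho\colon G_{\Z_{(p)}}(f)\to G_{\Q}(f)$.

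First I would identify the image. For $\mathrm{Im}\,\rho\subseteq K(f,p)$: starting from $\binom{a_1}{a_0}\in V_f(\Z_{(p)})^{\times}$, I would clear a common denominator prime to $p$ and then cancel the $\gcd$ of the resulting integer entries; both scalars are $p$-adic units (the $\gcd$ cannot be divisible by $p$, since by (\ref{eq:LR8}) the value of $\Lambda$ scales by the square of the scalar and $\Lambda(a_1,a_0)\in\Z_{(p)}^{\times}$), so this yields a representative $\binom{w_1}{w_0}$ with $w_0,w_1\in\Z$, $\gcd(w_0,w_1)=1$, still $\underset{\Z_{(p)}}{\sim}$-equivalent to $\binom{a_1}{a_0}$, and with $p\nmid\Lambda(w_1,w_0)$; hence $\mathrm{red}_p$ of its class lies in $V_f(\F_p)^{\times}/\underset{\F_p}{\sim}=G_{\F_p}(f)$, i.e.\ the class is in $K(f,p)$. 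Conversely, a class in $K(f,p)$ has a coprime integer representative $\binom{w_1}{w_0}$ with $p\nmid\Lambda(w_1,w_0)$ by the definition of $K(f,p)$, so $\Lambda(w_1,w_0)\in\Z\cap\Z_{(p)}^{\times}$, whence $\binom{w_1}{w_0}\in V_f(\Z_{(p)})^{\times}$ and the class equals $\rho$ of its own class in $G_{\Z_{(p)}}(f)$.

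Next I would check injectivity: if $\binom{a_1}{a_0},\binom{b_1}{b_0}\in V_f(\Z_{(p)})^{\times}$ become $\underset{\Q}{\sim}$-equivalent via $\lambda\in\Q^{\times}$, then $\Lambda(a_1,a_0)=\lambda^2\Lambda(b_1,b_0)$ by (\ref{eq:LR8}), so $\lambda^2$, and hence $\lambda$, is a $p$-adic unit and the two vectors are already $\underset{\Z_{(p)}}{\sim}$-equivalent. A bijective homomorphism is an isomorphism, which proves (\ref{enum:KK*1}). For (\ref{enum:KK*2}): since $\Lambda(0,1)=Q\in\Z_{(p)}^{\times}$, the class of $\binom{0}{1}$ lies in $G_{\Z_{(p)}}(f)$ and $\rho$ carries its cyclic subgroup isomorphically onto the corresponding subgroup of $K(f,p)$; therefore $\rho$ descends to an isomorphism of quotients. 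By (\ref{eq:EC4}) the source quotient is $G^*_{\Z_{(p)}}(f)$, and by Lemma~\ref{lem:ImPi} applied to $N=K(f,p)$ the target quotient is identified via $\pi$ with $K^*(f,p)$; chasing representatives through these two identifications yields precisely the stated map $\rho^*$.

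The computations are elementary; the only real content — and the step I expect to take the most care — is the $p$-adic valuation bookkeeping in $\mathrm{Im}\,\rho=K(f,p)$, where one must check both that clearing denominators and removing the $\gcd$ never leaves the $\underset{\Z_{(p)}}{\sim}$-class (so $\rho$ hits exactly the classes admitting an integral $\Lambda$-unit representative) and that coprimality of the entries is exactly what converts "$\Lambda\in\Z_{(p)}^{\times}$" into the membership condition defining $K(f,p)$. Once that is settled, the homomorphism property and the descent to the starred groups are formal, using only (\ref{eq:LR10}), (\ref{eq:EC4}) and Lemma~\ref{lem:ImPi}.
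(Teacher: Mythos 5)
Your proposal is correct and follows essentially the same route as the paper: the map induced by $\Z_{(p)}\hookrightarrow\Q$, injectivity via the relation $\Lambda(a_1,a_0)=\lambda^2\Lambda(b_1,b_0)$ forcing $\lambda\in\Z_{(p)}^{\times}$, and part (2) by passing to the quotients by $\left\langle\left[\begin{smallmatrix}0\\1\end{smallmatrix}\right]\right\rangle$ on both sides. The only difference is that you spell out the surjectivity onto $K(f,p)$ (clearing denominators and the $\gcd$, with the $p$-adic unit bookkeeping), which the paper dismisses as trivial, and you phrase injectivity as a two-element comparison rather than a trivial-kernel check; both are fine.
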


\begin{proof}
(\ref{enum:KK*1}) 
We only show that $\rho$ is injective because the other part is trivial.
If $\rho \left( \left[ \begin{array}{c}
w_1\\
w_0\\
\end{array} \right] \right)=\left[ \begin{array}{c}
1\\
0\\
\end{array} \right]$  for $\left[ \begin{array}{c}
w_1\\
w_0\\
\end{array} \right] \in G^*_{\Z_{(p)}}(f)$, then
we get $w_0=0, w_1\in \Q^{\times}$. 
Furthermore, we have $w_1\in \Z_{(p)}^{\times}$ since
$\Lambda (w_1,w_0)=w_1^2-Pw_0w_1+Qw_0^2 \in \Z_{(p)}^{\times}$.
We get $\left[ \begin{array}{c}
w_1\\
w_0\\
\end{array} \right] =\left[ \begin{array}{c}
1\\
0\\
\end{array} \right]$ in $G^*_{\Z_{(p)}}(f)$, and hence $\rho$ is injective.

(\ref{enum:KK*2}) We get the assertion from (\ref{enum:KK*1})  since the kernels of the natural surjection $K(f,p) \to K^*(f,p)$
and $G_{\Z_{(p)} }(f) \to G^*_{\Z_{(p)}} (f)$ are the subgroups generated by $\left[
 \begin{array}{c}
0\\
1\\
\end{array} \right]$.
\qed
\end{proof}

By Lemma~\ref{lem:KK*}, Theorems~\ref{theo:G} (\ref{enum:G2} ) and \ref{theo:G*} (\ref{enum:G*2}), we get the following theorem.
\begin{theo}\label{theo:KK*}
Put $D=p^sD_0$ with $s\geq 0,\ p\nmid D_0$.
\begin{enumerate}[$(1)$]
\item If $f(t)$ is irreducible over $\Q$, then we have
\[
K(f,p) \tt
\Z_{(p)}[\theta_1]^{\times}/\Z_{(p)}^{\times} , \quad \left[
\begin{array}{c}
w_1\\
w_0\\
\end{array} \right] \mapsto w_1-w_0\theta_1 \bmod{\Z_{(p)}^{\times}},
\]
and
\[
K^*(f,p) \tt
\Z_{(p)}[\theta_1]^{\times}/\Z_{(p)}^{\times}\langle \theta_1 \rangle , \left[
\begin{array}{c}
w_1\\
w_0\\
\end{array} \right] \mapsto w_1-w_0\theta_1 \bmod{\Z_{(p)}^{\times}
\langle \theta_1 \rangle}.
\]
\label{enum:theoKK*1}
\item If $f(t)$ is reducible over $\Q$, then we have
\[
K(f,p) \tt \begin{cases}
\Z_{(p)}^{\times} & \text{if}\ p\nmid D,\\
1+p^{\frac{s}{2}} \Z_{(p)} & \text{if}\ p|D,
\end{cases} 
\quad
\left[
\begin{array}{c}
w_1\\
w_0\\
\end{array} \right]  \mapsto (w_1-w_0 \theta_1)(w_1-w_0\theta_2)^{-1},
\]
and
\begin{align*}
K^*(f,p) \tt \begin{cases}
\Z_{(p)}^{\times}/\langle \theta_1 \theta_2^{-1} \rangle  & \text{if}\ p\nmid D,\\
(1+p^{\frac{s}{2}} \Z_{(p)} )/\langle \theta_1 \theta_2^{-1} \rangle & \text{if}\ p|D,
\end{cases} 
\\
\left[
\begin{array}{c}
w_1\\
w_0\\
\end{array} \right] \mapsto (w_1-w_0 \theta_1)(w_1-w_0\theta_2)^{-1} \bmod{\langle \theta_1 \theta_2^{-1} \rangle}.
\end{align*}
\label{enum:theoKK*2}
\end{enumerate}
\end{theo}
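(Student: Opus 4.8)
**Proof strategy for Theorem~\ref{theo:KK*}.**

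The plan is to combine the isomorphism $\rho$ (resp. $\rho^*$) from Lemma~\ref{lem:KK*}, which identifies $K(f,p)$ with $G_{\Z_{(p)}}(f)$ (resp. $K^*(f,p)$ with $G^*_{\Z_{(p)}}(f)$), with the explicit descriptions of $G_{\Z_{(p)}}(f)$ and $G^*_{\Z_{(p)}}(f)$ already computed in Theorem~\ref{theo:G}~(\ref{enum:G2}) and Theorem~\ref{theo:G*}~(\ref{enum:G*2}). Since $\Z_{(p)}$ is a local UFD with $p \nmid Q$ (so that $Q \in \Z_{(p)}^\times$), both of those theorems apply directly, and the composite isomorphism is exactly the map $\left[\begin{smallmatrix} w_1 \\ w_0 \end{smallmatrix}\right] \mapsto w_1 - w_0\theta_1 \bmod \Z_{(p)}^\times$ (irreducible case) or $\mapsto (w_1-w_0\theta_1)(w_1-w_0\theta_2)^{-1}$ (reducible case), as claimed. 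So the proof is essentially a matter of tracking formulas through the two cited results and verifying that the stated target groups coincide with the ones appearing there.

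For the irreducible case (\ref{enum:theoKK*1}), I would simply quote Theorem~\ref{theo:G}~(\ref{enum:G2}) and Theorem~\ref{theo:G*}~(\ref{enum:G*2}) verbatim: the target is $\Z_{(p)}[\theta_1]^\times/\Z_{(p)}^\times$ (resp. modulo the extra factor $\langle \theta_1\rangle$), and the maps are compatible with $\pi$ because $\Psi_{\Z_{(p)}}\!\left(\left[\begin{smallmatrix} 0 \\ 1 \end{smallmatrix}\right]\right) = \theta_1 \bmod \Z_{(p)}^\times$ by (\ref{eq:EC5})–(\ref{eq:EC6}). For the reducible case (\ref{enum:theoKK*2}), the one point needing care is the value of $s$: in Theorem~\ref{theo:G}~(\ref{enum:G2}) the discriminant $d = D = P^2 - 4Q$ lies in $\Q$, and over the local ring $\Z_{(p)}$ we write $D = p^s D_0$ with $p \nmid D_0$; then $\sqrt{d}\,\Z_{(p)} = p^{s/2}\Z_{(p)}$ exactly when $s$ is even (which it must be, since $f$ reducible over $\Q$ forces $D \in \Q^{\times 2}$, hence $s$ even), and $d \in \Z_{(p)}^\times \iff s = 0 \iff p \nmid D$. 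This is precisely the case distinction $p \nmid D$ versus $p \mid D$ in the statement, and (\ref{eq:EC8})–(\ref{eq:EC9}) give the claimed maps, again with $\Psi_{\Z_{(p)}}\!\left(\left[\begin{smallmatrix} 0 \\ 1 \end{smallmatrix}\right]\right) = \theta_1\theta_2^{-1}$ handling the passage to $K^*(f,p)$.

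The only genuinely substantive point — and hence the step I would flag as the "obstacle" — is confirming that the reducible-over-$\Q$ hypothesis indeed guarantees $f$ has no double root when $p \nmid Q$, so that we land in case (\ref{enum:Vf2i}) of Theorem~\ref{theo:Vf} rather than the nilpotent case (\ref{enum:Vf2ii}): a double root $\theta$ would satisfy $2\theta = P$ and $\theta^2 = Q$, giving $D = 0$, contradicting the standing assumption $D \ne 0$. Granting this, every ingredient is already in place, and the proof reduces to the sentence "apply Lemma~\ref{lem:KK*} together with Theorems~\ref{theo:G} and \ref{theo:G*}," with the $s$-bookkeeping spelled out as above. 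I would present it in exactly that compressed form, since the excerpt already does the analytic work in Section~\ref{sec:EC}.
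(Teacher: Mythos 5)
Your proposal is correct and is exactly the paper's own argument: the authors derive Theorem~\ref{theo:KK*} in one line by combining Lemma~\ref{lem:KK*} with Theorems~\ref{theo:G}~(\ref{enum:G2}) and \ref{theo:G*}~(\ref{enum:G*2}), just as you do. Your extra bookkeeping (evenness of $s$, $\sqrt{d}\,\Z_{(p)}=p^{s/2}\Z_{(p)}$, and the exclusion of the double-root case via $D\ne 0$) is all sound and merely makes explicit what the paper leaves implicit.
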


\section{ Group structures of $G_{\Q}(f)/K(f,p)$ and $G^*_{\Q}(f)/K^*(f,p)$}\label{sec:G/Kp}

In this section,  we determine  the structures of $G_{\Q}(f)/K(f,p)$ and $G_{\Q}^*(f)/K^*(f,p)$.
Put  $F:=\Q(\theta_1)$ and denote the ring of integers of $F$ by ${\mathcal O}_F$.
Let $p$ be a prime number and ${\mathfrak p}$ be a prime ideal of $F$ above $p$.
Put ${\mathcal O}_{\mathfrak p}:=S_{\mathfrak p}^{-1} {\mathcal O}_F$ and ${\mathcal O}_{(p)}:=
S_p^{-1}{\mathcal O}_F=\bigcap_{ {\mathfrak p} \mid p} {\mathcal O}_{F,{\mathfrak p}}  $,
where $S_{\mathfrak p}:={\mathcal O}_F \setminus {\mathfrak p}$ and $S_p:=\Z\setminus p\Z$.
We have ${\mathcal O}_{(p)}^{\times} =\{ \alpha \in F^{\times} \mid v_{\mathfrak p}(\alpha)=0 \ \text{
for any}\ {\mathfrak p}\ \text{above}\ p \}$.
Put $D=ma^2$ with $a\in \N$ and a squarefree integer $m$. We have
$F=\Q(\sqrt{m})$, and if $p\ne 2$, then ${\mathcal O}_{(p)}=\Z_{(p)} [\sqrt{m} ],\ \Z_{(p)}[\theta_1 ]=\Z_{(p)} [\sqrt{D} ]$.

\begin{lem}\label{lem:localring}
 If $p^2 \nmid D$, then we have ${\mathcal O}_{(p)}=\Z_{(p)} [\theta_1]$.
\end{lem}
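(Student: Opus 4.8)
The plan is to compare $\Z[\theta_1]$ with the maximal order $\mathcal{O}_F$ by means of their discriminants. If $f(t)$ is reducible over $\Q$, then $\theta_1\in\Q$ is a root of the monic integer polynomial $f$, so $\theta_1\in\Z$, $F=\Q$, $\mathcal{O}_F=\Z$, and both sides of the claimed equality are $\Z_{(p)}$ (the hypothesis $p^2\nmid D$ is not even needed here); so from now on I assume $f(t)$ is irreducible over $\Q$. Then $\{1,\theta_1\}$ is a $\Q$-basis of $F$, and $\Z[\theta_1]=\Z\oplus\Z\theta_1$ is an order in $\mathcal{O}_F$ of finite index $c:=[\mathcal{O}_F:\Z[\theta_1]]$.

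First I would record the discriminant of this order. Evaluating the trace form on the basis $\{1,\theta_1\}$, with $\mathrm{Tr}(1)=2$, $\mathrm{Tr}(\theta_1)=\theta_1+\theta_2=P$ and $\mathrm{Tr}(\theta_1^2)=\theta_1^2+\theta_2^2=P^2-2Q$ (equivalently, using $\mathrm{disc}(f)=(\theta_1-\theta_2)^2$), one gets $\mathrm{disc}(\Z[\theta_1])=P^2-4Q=D$. Combining this with the standard relation $\mathrm{disc}(\Z[\theta_1])=c^2\,\mathrm{disc}(\mathcal{O}_F)$ shows that $c^2\mid D$ in $\Z$. Hence, if $p^2\nmid D$, then $2v_p(c)=v_p(c^2)\le v_p(D)\le 1$, which forces $v_p(c)=0$, i.e. $p\nmid c$.

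Next I would localize. The quotient $\mathcal{O}_F/\Z[\theta_1]$ is a finite abelian group of order $c$, and every element of it is annihilated by $c$, which lies in $S_p=\Z\setminus p\Z$ since $p\nmid c$; therefore $S_p^{-1}\bigl(\mathcal{O}_F/\Z[\theta_1]\bigr)=0$. Because localization is exact, $S_p^{-1}\Z[\theta_1]=S_p^{-1}\mathcal{O}_F=\mathcal{O}_{(p)}$. Finally $S_p^{-1}\Z[\theta_1]=S_p^{-1}(\Z\oplus\Z\theta_1)=\Z_{(p)}\oplus\Z_{(p)}\theta_1$, and this coincides with $\Z_{(p)}[\theta_1]$ because the relation $\theta_1^2=P\theta_1-Q$ already makes $\Z_{(p)}+\Z_{(p)}\theta_1$ a subring of $F$. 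This yields $\mathcal{O}_{(p)}=\Z_{(p)}[\theta_1]$, as desired.

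I do not anticipate a genuine obstacle here; the only delicate point is invoking the discriminant--index relation correctly. If one prefers to avoid it, the same conclusion can be reached by the elementary dichotomy already set up in this section: for odd $p$, $p^2\nmid D=ma^2$ with $m$ squarefree forces $p\nmid a$, so $\Z_{(p)}[\theta_1]=\Z_{(p)}[\sqrt D]=\Z_{(p)}[a\sqrt m]=\Z_{(p)}[\sqrt m]=\mathcal{O}_{(p)}$; for $p=2$, the congruence $D=P^2-4Q\equiv P^2\pmod 4$ together with $4\nmid D$ forces $D\equiv 1\pmod 4$, whence $a$ is odd and $m\equiv 1\pmod 4$, so $\mathcal{O}_{(2)}=\Z_{(2)}\bigl[\tfrac{1+\sqrt m}{2}\bigr]$, and the identity $\theta_1=\tfrac{P-a}{2}+a\cdot\tfrac{1+\sqrt m}{2}$ with $\tfrac{P-a}{2}\in\Z$ and $a\in\Z_{(2)}^{\times}$ exhibits $\Z_{(2)}[\theta_1]=\Z_{(2)}\bigl[\tfrac{1+\sqrt m}{2}\bigr]=\mathcal{O}_{(2)}$. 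So the $p=2$ case, which might look like the place a problem could hide, is harmless because $P^2-4Q$ is automatically $\equiv 0$ or $1\pmod 4$.
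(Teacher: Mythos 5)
Your argument is correct, and your primary route is genuinely different from the paper's. The paper proves the lemma by direct computation: for $F\ne\Q$ and $p$ odd it writes ${\mathcal O}_{(p)}=\Z_{(p)}[\sqrt{m}]=\Z_{(p)}[\sqrt{D}]=\Z_{(p)}[\theta_1]$ (the hypothesis $p^2\nmid D=ma^2$ entering exactly where you note, to give $p\nmid a$), and for $p=2$ it deduces $2\nmid P$, $m\equiv 1\pmod 4$, $2\nmid a$ and exhibits $(1\pm\sqrt{m})/2$ explicitly as an element of $\Z_{(2)}[\theta_1]$ --- this is precisely your ``elementary dichotomy'' fallback, down to the same identity relating $\theta_1$ and $(1+\sqrt m)/2$. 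Your main argument instead invokes the conductor--discriminant relation $\mathrm{disc}(\Z[\theta_1])=c^2\,\mathrm{disc}({\mathcal O}_F)$ with $c=[{\mathcal O}_F:\Z[\theta_1]]$, deduces $p\nmid c$ from $p^2\nmid D$, and kills the cokernel by localizing. That version is cleaner (no case split at $p=2$) and generalizes verbatim to minimal polynomials of any degree, at the cost of importing the trace-form/index machinery; the paper's computation is more elementary and has the side benefit of producing the explicit identifications $\Z_{(p)}[\theta_1]=\Z_{(p)}[\sqrt D]$ and ${\mathcal O}_{(p)}=\Z_{(p)}[\sqrt m]$ that are reused in the proof of Lemma~\ref{lem:O/Zp}. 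Both arguments are complete; the only step worth spelling out in yours is the one you already flag, namely that $\mathrm{disc}({\mathcal O}_F)\in\Z$ so that $c^2\mid D$ genuinely holds in $\Z$.
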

\begin{proof}
If $F=\Q$, then the assertion is trivial. Assume $F\ne \Q$.
In the case $p\ne 2$, the assertion follows from 
\[
{\mathcal O}_{(p)}=\Z_{(p)}[\sqrt{m}]=\Z_{(p)} [\sqrt{D} ]=\Z_{(p)}[\theta_1 ].
\]
Assume $p=2$. Since $2^2\nmid D=P^2-4Q$, we have
$2\nmid P$. Furthermore, since $\theta_1=(P\pm \sqrt{D})/2=(P\pm a\sqrt{m})/2 \in {\mathcal O}_F$,
we have
$m\equiv 1\pmod{4}$ and $2\nmid a$, and hence ${\mathcal O}_F=\Z[(1+\sqrt{m})/2 ]$.
We get $(1+\sqrt{m})/2 \in \Z_{(p)} [\theta_1]$ from 
\[
\frac{1\pm \sqrt{m}}{2}=\theta_1-\frac{(P-1)\pm (a-1)\sqrt{m}}{2}.
\]
Therefore, we have ${\mathcal O}_{(p)}=\Z_{(p)}[\theta_1]$.
\qed
\end{proof}
\begin{lem}\label{lem:F/ZpQ}
If $f(t)$ is irreducible over $\Q$, then we have
\[
F^{\times}/\Z_{(p)} [\theta_1]^{\times} \Q^{\times} \simeq
\begin{cases}
{\mathcal O}_{(p)}^{\times}/\Z_{(p)}[\theta_1]^{\times} & \text{if $p$ is inert in $F$},\\
\Z \times ( {\mathcal O}_{(p)}^{\times}/\Z_{(p)}[\theta_1]^{\times} )& \text{if $p$ splits in $F$},\\
\Z/2\Z \times ( {\mathcal O}_{(p)}^{\times}/\Z_{(p)}[\theta_1]^{\times}) & \text{if $p$ is ramified in $F$}.
\end{cases}
\]
\end{lem}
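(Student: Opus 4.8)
The plan is to interpose the subgroup $\mathcal{O}_{(p)}^{\times}\Q^{\times}$, which lies between $\Z_{(p)}[\theta_1]^{\times}\Q^{\times}$ and $F^{\times}$, and to analyze the short exact sequence
\[
1 \longrightarrow \mathcal{O}_{(p)}^{\times}\Q^{\times}/\Z_{(p)}[\theta_1]^{\times}\Q^{\times} \longrightarrow F^{\times}/\Z_{(p)}[\theta_1]^{\times}\Q^{\times} \longrightarrow F^{\times}/\mathcal{O}_{(p)}^{\times}\Q^{\times} \longrightarrow 1 .
\]
For the left-hand term I would first note that $\mathcal{O}_{(p)}^{\times}\cap\Q^{\times}=\Z_{(p)}^{\times}\subseteq\Z_{(p)}[\theta_1]^{\times}$, which forces $\mathcal{O}_{(p)}^{\times}\cap\Z_{(p)}[\theta_1]^{\times}\Q^{\times}=\Z_{(p)}[\theta_1]^{\times}$: if $\alpha=uq$ with $\alpha\in\mathcal{O}_{(p)}^{\times}$, $u\in\Z_{(p)}[\theta_1]^{\times}$ and $q\in\Q^{\times}$, then $q=\alpha u^{-1}\in\mathcal{O}_{(p)}^{\times}\cap\Q^{\times}=\Z_{(p)}^{\times}$, hence $\alpha\in\Z_{(p)}[\theta_1]^{\times}$. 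Consequently the left-hand term is canonically $\mathcal{O}_{(p)}^{\times}/\Z_{(p)}[\theta_1]^{\times}$, exactly the factor appearing in the statement.

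Next I would compute the cokernel $F^{\times}/\mathcal{O}_{(p)}^{\times}\Q^{\times}$ by valuations. Since $\mathcal{O}_{(p)}=S_p^{-1}\mathcal{O}_F$ is a Dedekind domain with only the finitely many maximal ideals lying over $p$, it is a principal ideal domain, and the family of normalized valuations gives an isomorphism $F^{\times}/\mathcal{O}_{(p)}^{\times}\tt\bigoplus_{\mathfrak{p}\mid p}\Z$, $\alpha\mapsto(v_{\mathfrak{p}}(\alpha))_{\mathfrak{p}\mid p}$. Under this identification the image of $\Q^{\times}$ is the cyclic subgroup generated by $(v_{\mathfrak{p}}(p))_{\mathfrak{p}}=(e_{\mathfrak{p}})_{\mathfrak{p}}$, so the cokernel is $0$ if $p$ is inert (a single $\mathfrak{p}$ with $e_{\mathfrak{p}}=1$), is $\Z^{2}/\Z\cdot(1,1)\cong\Z$ if $p$ splits, and is $\Z/2\Z$ if $p$ is ramified (a single $\mathfrak{p}$ with $e_{\mathfrak{p}}=2$). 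Combined with the first paragraph this already yields the inert case at once, and the split case as soon as one knows the sequence splits there — which it does, because $\Z$ is free, hence projective.

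The remaining point, and the one I expect to be hardest, is to split the sequence in the ramified case, i.e.\ to exhibit an element of $F^{\times}/\Z_{(p)}[\theta_1]^{\times}\Q^{\times}$ of order dividing $2$ mapping onto the generator of $\Z/2\Z$. The natural candidate is the class of $\theta_1-\theta_2=\sqrt{D}$: its square is $D\in\Q^{\times}$, so the class is $2$-torsion, and in $F^{\times}/\mathcal{O}_{(p)}^{\times}\Q^{\times}\cong\Z/2\Z$ it represents $v_{\mathfrak{p}}(\sqrt{D})\bmod 2$. Thus the splitting reduces to checking that $v_{\mathfrak{p}}(\sqrt{D})$ is odd; writing $D=ma^{2}$ with $m$ squarefree, so that $F=\Q(\sqrt{m})$, this follows readily from $v_{\mathfrak{p}}(\sqrt{m})=1$ together with $v_{\mathfrak{p}}(a)=e_{\mathfrak{p}}v_p(a)$ being even, whenever $p$ is odd or $p=2$ with $m\not\equiv 3\pmod 4$. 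The subcase $p=2$, $m\equiv 3\pmod 4$ is the genuinely delicate one and I would treat it separately, working with the explicit shape of $\mathcal{O}_F$ and of the order $\Z_{(2)}[\theta_1]$ near the prime above $2$ (and invoking Lemma~\ref{lem:localring} to dispose of the situation $p^{2}\nmid D$, where $\mathcal{O}_{(p)}^{\times}/\Z_{(p)}[\theta_1]^{\times}$ vanishes and there is nothing to split). Once a section is available, $F^{\times}/\Z_{(p)}[\theta_1]^{\times}\Q^{\times}$ splits as the internal product of $\langle[\sqrt{D}]\rangle\cong\Z/2\Z$ with $\mathcal{O}_{(p)}^{\times}/\Z_{(p)}[\theta_1]^{\times}$, which is the claimed isomorphism.
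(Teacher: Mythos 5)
Your argument is correct in outline and takes a genuinely different route from the paper's. You organize everything around the exact sequence obtained by interposing $\mathcal{O}_{(p)}^{\times}\Q^{\times}$, computing the kernel via the intersection $\mathcal{O}_{(p)}^{\times}\cap\Z_{(p)}[\theta_1]^{\times}\Q^{\times}=\Z_{(p)}[\theta_1]^{\times}$ (your verification of this is correct, and the paper uses it only implicitly) and the cokernel via the valuation isomorphism $F^{\times}/\mathcal{O}_{(p)}^{\times}\simeq\bigoplus_{\mathfrak{p}\mid p}\Z$, under which $\Q^{\times}$ lands on $\Z\cdot(e_{\mathfrak{p}})_{\mathfrak{p}}$. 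This treats the three splitting types uniformly and makes the provenance of the $\Z$ and $\Z/2\Z$ factors transparent. The paper instead argues case by case with explicit maps: for split $p$ the surjection $\mathcal{V}(\alpha)=v_{\mathfrak{p}}(\alpha)-v_{\mathfrak{p}^{\sigma}}(\alpha)$, which visibly kills $\Q^{\times}$ and has kernel $\mathcal{O}_{(p)}^{\times}\langle p\rangle$, and for ramified $p$ the isomorphism $\mathcal{W}(\alpha)=(\pi^{-v_{\mathfrak{p}}(\alpha)}\alpha,\,v_{\mathfrak{p}}(\alpha))$ with $\pi$ a uniformizer. Your inert and split cases are complete (the split case splits for free since $\Z$ is projective).

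The one place your write-up is not yet a proof is the one you flag: splitting the extension $1\to\mathcal{O}_{(p)}^{\times}/\Z_{(p)}[\theta_1]^{\times}\to E\to\Z/2\Z\to 1$ in the ramified case. Two remarks. First, for odd $p$ you do not need an explicit $2$-torsion section at all: $\mathcal{O}_{(p)}^{\times}/\Z_{(p)}[\theta_1]^{\times}$ is a finite $p$-group (this is what Lemma~\ref{lem:O/Zp} in effect computes), so for $p$ odd the finite abelian group $E$ decomposes as $\Z/2\Z$ times its odd part automatically; your candidate $\sqrt{D}$ does work there, since $v_{\mathfrak{p}}(\sqrt{m})=1$ and $v_{\mathfrak{p}}(a)$ is even, but it is more machinery than required. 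Second, the subcase $p=2$ with $m\equiv 3\pmod 4$ is a genuine hole as written: there $\sqrt{m}$ is a unit at $\mathfrak{p}$ and integrality of $\theta_1$ forces $2\mid a$, so $v_{\mathfrak{p}}(\sqrt{D})$ is even, the class of $\sqrt{D}$ maps to $0$ in the cokernel, and the coprimality argument is also unavailable because the kernel is then a $2$-group. You would need a separate argument (or an added hypothesis) there; note that the paper's own treatment of the ramified case is terse at exactly this point, and that all downstream uses (Lemma~\ref{lem:O/Zp}, Corollary~\ref{cor:G/K}) exclude $p=2$ when $p$ ramifies.
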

\begin{proof}
If $p$ is inert in $F$, then the assertion follows from
\[
F^{\times}/\Z_{(p)}[\theta_1]^{\times} \Q^{\times} ={\mathcal O}_{(p)}^{\times} \langle p\rangle /\Z_{(p)}[\theta_1]^{\times}
\langle p\rangle \simeq {\mathcal O}_{(p)}^{\times} /\Z_{(p)}[\theta_1]^{\times}.
\]
Next, we consider the case $p$ splits in $F$.
Put Gal$(F/\Q)=\langle \sigma \rangle $ and $(p)={\mathfrak p}{\mathfrak p}^{\sigma}$.
Consider a split surjection
\[
{\mathcal V}: F^{\times}/{\Z_{(p)}[\theta_1]}^{\times} \Q^{\times} \to \Z,\quad {\mathcal V}(\alpha)=v_{\mathfrak p}(\alpha^{1-\sigma})
=v_{\mathfrak p}(\alpha)-v_{{\mathfrak p}^{\sigma}}(\alpha).
\]
We have $\alpha \in$Ker${\mathcal  V}$ if and only if $\alpha=p^n \beta$ for some $n\in \Z$ and $\beta\in 
{\mathcal O}_{(p)}^{\times}$.
Therfore, we get
\[
\mathrm{Ker}{\mathcal V}={\mathcal O}_{(p)}^{\times} \langle  p\rangle /\Z_{(p)}[\theta_1]^{\times} \Q^{\times}
={\mathcal O}_{(p)}^{\times} \langle p\rangle /\Z_{(p)} [\theta_1]^{\times} \langle p\rangle \simeq
{\mathcal O}_{(p)}^{\times} /\Z_{(p)}[\theta_1]^{\times}, 
\]
and hence 
\[
F^{\times}/\Z_{(p)}[\theta_1]^{\times} \Q^{\times} \simeq \Z\times ({\mathcal O}_{(p)}^{\times}/\Z_{(p)}[\theta_1]^{\times}).
\]
Finally, we consider the case $p$ is ramified in $F$.
Put $(p)={\mathfrak p}^2$, and choose $\pi \in {\mathfrak p}\setminus {\mathfrak p}^2$.
The assertion follows from the isomorphism
\[
{\mathcal W}: F^{\times} \overset{\sim}{\longrightarrow} {\mathcal O}_{(p)}^{\times}
\times \Z,\quad {\mathcal W}(\alpha)=(\pi^{-v_{\mathfrak p}(\alpha)} \alpha, v_{\mathfrak p}(\alpha)),
\]
and 
\[
{\mathcal W} (\Z_{(p)}[\theta_1]^{\times} \Q^{\times})
={\mathcal W}(\Z_{(p)}[\theta_1]^{\times} \langle p\rangle )
= \Z_{(p)} [\theta_1]^{\times} \times 2\Z.
\]
\qed
\end{proof}
\begin{lem}\label{lem:O/Zp}
Assume that $f(t)$ is irreducible over $\Q$.
Put $D=p^s D_0$ with $s\geq 0, \ p\nmid D_0$.
\begin{enumerate}[$(1)$]
\item If $p$ is inert in $F$, then we have
\[
{\mathcal O}_{(p)}^{\times}/\Z_{(p)}[\theta_1]^{\times}  \simeq \begin{cases}
0 & \text{if $s=0$},\\
\Z/p^{s/2-1}(p+1)\Z & \text{if  $s\ne0, \ s\equiv 0\pmod{2}$ and $p\ne 2$}.
\end{cases}
\]
\label{enum:O/Zp1}
\item If $p$ splits in $F$, then we have
\[
{\mathcal O}_{(p)}^{\times} /\Z_{(p)}[\theta_1]^{\times}\simeq \begin{cases}
0 & \text{if $s=0$},\\
\Z/p^{s/2-1}(p-1)\Z & \text{if  $s\ne0, \ s\equiv 0\pmod{2}$ and $p\ne 2$}.
\end{cases}
\]
\label{enum:O/Zp2}
\item If $p$ is ramified in $F$, then we have
\begin{align*}
& {\mathcal O}_{(p)}^{\times} /\Z_{(p)}[\theta_1]^{\times} \\
 &  \simeq \begin{cases}
0 & \text{if $s=1$},\\
\Z/p^{[s/2]}\Z & \text{if  $s\ne1, \ s\equiv 1\pmod{2}$ and $p\ne 2,3$},
\\
\Z/p^{[s/2]}\Z  \ \text{or}  \
\Z/p\Z \times \Z/p^{[s/2]-1}\Z
&
 \text{if  $s\ne1, \ s\equiv 1\pmod{2}$ and $p=3$.}
\end{cases}
\end{align*}
\label{enum:O/Zp3}
\end{enumerate}
\end{lem}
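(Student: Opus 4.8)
The plan is to turn this into a computation of a unit group of the finite ring ${\mathcal O}_F/p^e{\mathcal O}_F$ and then to analyse that ring with Teichm\"uller lifts and the $p$-adic logarithm, case by case according to the splitting of $p$.

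\emph{Step 1: the conductor.} Since $p\ne2$ we have ${\mathcal O}_{(p)}=\Z_{(p)}[\sqrt m]$ and $\Z_{(p)}[\theta_1]=\Z_{(p)}[\sqrt D]=\Z_{(p)}[a\sqrt m]$, as recalled before Lemma~\ref{lem:localring}. Writing $e:=v_p(a)$, this gives $\Z_{(p)}[\theta_1]=\Z_{(p)}+p^e{\mathcal O}_{(p)}$, and from $D=ma^2$ with $m$ squarefree we get $s=v_p(m)+2e$, where $v_p(m)=0$ exactly when $p$ is inert or split in $F$ and $v_p(m)=1$ exactly when $p$ ramifies; hence $e=s/2$ in the first case and $e=[s/2]$ in the second. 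When $e=0$ (i.e. $s=0$ for $p$ unramified, $s=1$ for $p$ ramified) Lemma~\ref{lem:localring} gives ${\mathcal O}_{(p)}=\Z_{(p)}[\theta_1]$, so the quotient is trivial; from now on I assume $e\ge1$.

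\emph{Step 2: reduction to a finite ring.} I would show that reduction modulo $p^e{\mathcal O}_{(p)}$ induces an isomorphism
\[
{\mathcal O}_{(p)}^{\times}/\Z_{(p)}[\theta_1]^{\times}\ \tt\ ({\mathcal O}_F/p^e{\mathcal O}_F)^{\times}\big/\,(\Z/p^e\Z)^{\times},
\]
where $\Z/p^e\Z$ is embedded via $\Z\to{\mathcal O}_F\to{\mathcal O}_F/p^e{\mathcal O}_F$. Surjectivity holds because ${\mathcal O}_{(p)}$ is semilocal with $p^e{\mathcal O}_{(p)}$ inside its Jacobson radical, so any unit of the quotient lifts to a unit; for the kernel one checks that $1+p^e{\mathcal O}_{(p)}$ is a subgroup of $\Z_{(p)}[\theta_1]^{\times}$ and that an element of $\Z_{(p)}[\theta_1]=\Z_{(p)}+p^e{\mathcal O}_{(p)}$ which is a unit of ${\mathcal O}_{(p)}$ has its $\Z_{(p)}$-part in $\Z_{(p)}^{\times}$, hence lies in $\Z_{(p)}[\theta_1]^{\times}$ and reduces into the image of $(\Z/p^e\Z)^{\times}$. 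Since ${\mathcal O}_F/p^e{\mathcal O}_F\cong{\mathcal O}_{F,\mathfrak p}/p^e$ for $p$ inert or ramified and $\cong\prod_{\mathfrak p\mid p}{\mathcal O}_{F,\mathfrak p}/p^e$ for $p$ split, this reduces everything to a computation inside completions.

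\emph{Step 3: the three cases.} In each case decompose $({\mathcal O}_F/p^e{\mathcal O}_F)^{\times}\cong\mu\times W$, with $\mu$ the prime-to-$p$ roots of unity (Teichm\"uller) and $W=1+\mathfrak m$ the one-units, the latter identified with an additive group by the $p$-adic logarithm (legitimate for $p$ odd). For $p$ inert, $\mu\cong\mu_{p^2-1}$ and $W\cong(\Z/p^{e-1}\Z)^2$, and $(\Z/p^e\Z)^{\times}\cong\mu_{p-1}\times\Z/p^{e-1}\Z$ embeds as $\mu_{p-1}$ in $\mu_{p^2-1}$ and as a \emph{direct summand} $\Z/p^{e-1}\Z$ of $(\Z/p^{e-1}\Z)^2$ (the rational one-units land in the $\Z_p\cdot 1$ part of a $\Z_p$-basis of ${\mathcal O}_{F,\mathfrak p}$), so the quotient is $\Z/(p+1)\Z\times\Z/p^{e-1}\Z\cong\Z/p^{e-1}(p+1)\Z=\Z/p^{s/2-1}(p+1)\Z$ since $\gcd(p+1,p^{e-1})=1$. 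For $p$ split, the two completions give $({\mathcal O}_F/p^e)^{\times}\cong(\Z/p^e\Z)^{\times}\times(\Z/p^e\Z)^{\times}$ with $(\Z/p^e\Z)^{\times}$ diagonally embedded, so the quotient is $(\Z/p^e\Z)^{\times}\cong\Z/p^{s/2-1}(p-1)\Z$. For $p$ ramified with $p\ge5$, the residue field is $\F_p$, $p{\mathcal O}_{F,\mathfrak p}=\mathfrak p^2$, the logarithm already converges on $1+\mathfrak p$, and the same bookkeeping (now $\mathfrak p/\mathfrak p^{2e}\cong\Z/p^e\Z\times\Z/p^{e-1}\Z$, the image of the rational one-units being the direct summand $\Z/p^{e-1}\Z$) yields the cyclic group $\Z/p^e\Z=\Z/p^{[s/2]}\Z$.

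\emph{Main obstacle.} The delicate point is $p=3$ ramified: here $2/(p-1)=1$, so the logarithm converges only on $1+\mathfrak p^2$, and $1+\mathfrak p$ may contain a primitive cube root of unity — exactly when $F_{\mathfrak p}=\Q_3(\sqrt{-3})$, equivalently $D_0\equiv-1\pmod3$. If it does, then $1+\mathfrak p\cong\langle\zeta_3\rangle\times(1+\mathfrak p^2)$, and running Step~3 with this extra torsion gives, after quotienting by the image of $(\Z/3^e\Z)^{\times}$, the group $\Z/3\Z\times\Z/3^{[s/2]-1}\Z$ (still of order $3^{[s/2]}$); if $F_{\mathfrak p}=\Q_3(\sqrt3)$ one stays torsion-free and the answer is the cyclic $\Z/3^{[s/2]}\Z$ — this is the origin of the two alternatives in part~(\ref{enum:O/Zp3}). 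A secondary care point throughout is verifying that the image of the rational one-units is a direct summand, not merely a subgroup, of the relevant additive group, which is what forces the quotients to be cyclic in the inert, split and $p\ge5$ ramified cases.
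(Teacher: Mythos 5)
Your proposal is correct and follows essentially the same route as the paper: reduce ${\mathcal O}_{(p)}^{\times}/\Z_{(p)}[\theta_1]^{\times}$ to $({\mathcal O}_F/(p^{k}))^{\times}/(\Z/p^{k}\Z)^{\times}$ with $k=[s/2]$ (the paper's diagram giving (\ref{eq:O/Zp1}) is your conductor argument), then split off the Teichm\"uller part and apply the $p$-adic logarithm case by case, shifting to $1+\mathfrak p^{2}$ when $p=3$ is ramified exactly as the paper does with its parameter $\ell$. The one point where you go beyond the paper is the explicit criterion ($\zeta_3\in F_{\mathfrak p}$, i.e.\ $D_0\equiv -1\bmod 3$) deciding which of the two alternatives occurs in part (\ref{enum:O/Zp3}); the paper leaves that disjunction unresolved, and your criterion is not needed to establish the statement as written.
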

\begin{proof}
The assertions in the cases $s=0,1$ follow from Lemma~\ref{lem:localring}.
Consider the cases $s\ne 0,1$ and $p\ne 2$.
We have ${\mathcal O}_{(p)}=\Z_{(p)}[\sqrt{m}],\ \Z_{(p)}[\theta_1]=\Z_{(p)}[\sqrt{D}]$ since $p\ne 2$.
Put $k=[s/2] (\geq 1)$. From the following commutative diagram:
\begin{equation*}
\begin{CD}
1 @>>> 1+p^k {\mathcal O}_{(p)} @>>> {\mathcal O}_{(p)}^{\times}  @>>> ({\mathcal O}_F/(p^k))^{\times} @>>>1\\
   @. @|   @AAA  @AAA \\
   1 @>>> 1+p^k \Z_{(p)}[\sqrt{m}] 
 @>>> \Z_{(p)} [\theta_1]^{\times} @>>> (\Z/p^k\Z)^{\times}      @>>>1, \\
\end{CD}
\end{equation*} 
where the middle and right vertical maps are injective,
we get
\begin{equation}\label{eq:O/Zp1}
{\mathcal O}_{(p)}^{\times} /\Z_{(p)}[\theta_1]^{\times} \simeq 
({\mathcal O}_F/(p^k))^{\times}/(\Z/p^k\Z)^{\times}.
\end{equation}

(\ref{enum:O/Zp1}) The assertion in the case where $p$ is inert  in $F$ follows from the
isomorphisms:
$$
\begin{array}{ccccc}
& & & (id, \frac{1}{p} \mathrm{log}_{(p)}) & \\
({\mathcal O}_F/(p^k))^{\times} & \simeq & ({\mathcal O}_F/(p))^{\times}
\times
(1+(p))/(1+(p^k)) & \overset{\sim}{\longrightarrow} &
({\mathcal O}_F/(p))^{\times} \times {\mathcal O}_F/(p^{k-1})\\
& & & &\\
\cup \mid & & \cup \mid & & \cup \mid\\
& & & (id, \frac{1}{p} \mathrm{log}_p) & \\
(\Z/p^k\Z)^{\times} &\simeq &(\Z/p\Z)^{\times} \times (1+p\Z)/(1+p^k\Z) & 
\overset{\sim}{\longrightarrow} & (\Z/p\Z)^{\times} \times \Z/p^{k-1}\Z,
\end{array}
$$
and (\ref{eq:O/Zp1}).

(\ref{enum:O/Zp2}) Consider the case where $p$ splits  in $F$.  
Put Gal$(F/\Q)=\langle \sigma \rangle$ and $(p)={\mathfrak p}
{\mathfrak p}^{\sigma}$.
Then we have
$$
\begin{array}{rcl}
({\mathcal O}_F/(p^k))^{\times} & \simeq & ({\mathcal O}_F/(p))^{\times}
\times (1+{\mathfrak p})/(1+{\mathfrak p}^k) \times (1+{\mathfrak p}^{\sigma})
/(1+({\mathfrak p}^{\sigma})^k) \\
& &\\
& (id, \mathrm{log}_{\mathfrak p}, \mathrm{log}_{{\mathfrak p}^{\sigma}}) & \\
& \overset{\sim}{\longrightarrow} & ({\mathcal O}_F/(p))^{\times} \times
{\mathfrak p}/{\mathfrak p}^k \times {\mathfrak p}^{\sigma}/({\mathfrak p}^{\sigma}
)^k \\
& &\\
& \simeq & ({\mathcal O}_F/(p))^{\times} \times (p)/(p^k)\\
& &\\
& (id,\frac{1}{p} ) &\\
& \overset{\sim}{\longrightarrow} & ({\mathcal O}_F/(p))^{\times}
\times {\mathcal O}_F/(p^{k-1}),
\end{array}
$$ 
and hence
$$
\begin{array}{ccc}
({\mathcal O}_F/(p^k))^{\times}  & \simeq & ({\mathcal O}_F/(p))^{\times}
\times {\mathcal O}_F/(p^{k-1}) \\
& &\\
\cup \mid & & \cup \mid \\
& &\\
(\Z/p^k\Z)^{\times} & \simeq & (\Z/p\Z)^{\times} \times \Z/p^{k-1}\Z.
\end{array}
$$
The assertion follows from these  isomorphisms and (\ref{eq:O/Zp1}).

((\ref{enum:O/Zp3})) Consider the case where $p$ is ramified.
Put $(p)={\mathfrak p}^2$,
\[
\ell:=\begin{cases}
1 & \text{if}\ p\ne 2,3,\\
2 & \text{if}\ p=3,
\end{cases}
\]
ane choose $\pi \in {\mathfrak p}^{\ell} \setminus {\mathfrak p}^{\ell+1}$.
We have a commutative diagram:
\begin{equation}\label{eq:O/Zp2}
\begin{CD}
1 @>>> (1+{\mathfrak p}^{\ell})/(1+{\mathfrak p}^{2k})
 @>>> ({\mathcal O}_F/{\mathfrak p}^{2k})^{\times}  @>>> ({\mathcal O}_F/{\mathfrak p}^{\ell})^{\times} @>>>1\\
   @. @AAA   @AAA  @AAA \\
   1 @>>> (1+p \Z)/(1+p^k\Z)
 @>>> (\Z/p^k\Z)^{\times} @>>> (\Z/p\Z)^{\times}      @>>>1, \\
\end{CD}
\end{equation} 
where all the vertical maps are injective, and
\[
({\mathcal O}_F/{\mathfrak p}^{\ell})^{\times} /(\Z/p\Z)^{\times}
\simeq \begin{cases}
0 & \text{if}\ p\ne 2,3,
\\
\Z/p\Z & \text{if}\ p=3.
\end{cases}
\]
Define the injection $\iota :\Z/p^{k-1}\Z \to
{\mathcal O}_F/{\mathfrak p}^{2k-\ell}$
by 
\[
\iota (\alpha)=\begin{cases}
\pi \alpha & \text{if}\ p\ne 2,3,\\
\alpha & \text{if}\ p=3.
\end{cases}
\]
We have 
\[
\text{Coker} \ \iota \simeq \begin{cases}
\Z/p^k\Z & \text{if}\ p\ne 2,3,\\
\Z/p^{k-1}\Z & \text{if}\ p=3.
\end{cases}
\]
The assertion follows from (\ref{eq:O/Zp1}), (\ref{eq:O/Zp2})
and the following commutative diagram:
$$
\begin{array}{ccccc}
& \mathrm{log}_{\mathfrak p} & & \pi & \\
(1+{\mathfrak p}^{\ell})/
(1+{\mathfrak p}^{2k})  & \simeq & {\mathfrak p}^{\ell}/{\mathfrak p}^{2k}
& \overset{\sim}{\longleftarrow} &
 {\mathcal O}_F/{\mathfrak p}^{2k-\ell}\\
& & & &\\
\uparrow  & & \uparrow & &  \iota \uparrow \\
& \mathrm{log}_p & & p & \\
(1+p\Z)/(1+p^k\Z) &\longrightarrow  & p\Z/p^k\Z & 
\longleftarrow & \Z/p^{k-1}\Z,
\end{array}
$$
where all the vertical maps are injective.
\qed
\end{proof}
\begin{cor}\label{cor:G/K}
Put $D=p^sD_0$ with $s\geq 0,\ p\nmid D_0$.
\begin{enumerate}[$(1)$]
\item Assume that $f(t)$ is irreducible over $\Q$.
We have
\[
G_{\Q}(f)/K(f,p) \simeq G_{\Q}^*(f)/K^*(f,p) \simeq F^{\times}
/\Z_{(p)}[\theta_1]^{\times} \Q^{\times}.
\]
\begin{enumerate}[$(i)$]
\item If $p$ is inert in $F$, then
\[
G_{\Q}(f)/K(f,p) 
\simeq \begin{cases}
0 & \text{if}\ s=0,\\
\Z/p^{s/2-1}(p+1)\Z & \text{if}\ s\ne 0,s\equiv 0\pmod{2} \
\text{and}\ p\ne 2.
\end{cases}
\]
\label{enum:G/K1i}
\item If $p$ splits  in $F$, then
\[
G_{\Q}(f)/K(f,p) 
\simeq \begin{cases}
\Z & \text{if}\ s=0,\\
\Z \times \Z/p^{s/2-1}(p-1)\Z & \text{if}\ s\ne 0,s\equiv 0\pmod{2} \
\text{and}\ p\ne 2.
\end{cases}
\]
\label{enum:G/K1ii}
\item If $p$ is ramified in $F$, then
\begin{align*}
& G_{\Q}(f)/K(f,p) 
\simeq  \\ 
& \qquad \begin{cases}
\Z/2\Z & \text{if}\ s=1,\\
\Z/2p^{[s/2]}\Z & \text{if}\ s\ne 1,s\equiv 1\pmod{2} \
\text{and}\ p\ne 2,3,\\
\Z/2p^{[s/2]}\Z \ \text{or}\ \Z/2p\Z \times \Z/p^{[s/2]-1}\Z
& \text{if}\ s\ne 1,s\equiv 1\pmod{2} \
\text{and}\ p\ne 3.\\
\end{cases}
\end{align*}
\label{enum:G/K1iii}
\end{enumerate}
\label{enum:G/K1}
\item Assume that $f(t)$ is reducible over $\Q$ $($hence $s$ is even$)$.
We have
\begin{align*}
G_{\Q}(f)/K(f,p) & \simeq G_{\Q}^*(f)/K^*(f,p) \\
& \simeq
 \begin{cases}
\Q^{\times}/\Z_{(p)}^{\times}\simeq \Z &  \text{if}\ s=0,\\
\Q^{\times}/(1+p^{s/2}\Z_{(p)}) \simeq 
\Z \times \Z/(p-1)p^{s/2-1} \Z & \text{if}\ s\ne 0\
\text{and}\ p\ne 2.
\end{cases}
\end{align*}
\label{enum:G/K2}
\end{enumerate}
\end{cor}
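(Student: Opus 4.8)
The plan is to reduce the computation of $G_{\Q}(f)/K(f,p)$ (and of $G_{\Q}^*(f)/K^*(f,p)$) to a purely multiplicative quotient, and then to read off the answer from the local structure results already proved. First I would treat the irreducible case. Since $p\nmid Q$, the polynomial $f$ is also irreducible over $\Z_{(p)}$ (a monic quadratic over $\Z_{(p)}$ with no root in $\Q$), and $\theta_1\in\Z_{(p)}[\theta_1]^{\times}$ because $N(\theta_1)=\theta_1\theta_2=Q\in\Z_{(p)}^{\times}$. By Theorem~\ref{theo:G}~(\ref{enum:G1}) the map sending the class with initial terms $w_0,w_1$ to $w_1-w_0\theta_1\bmod{\Q^{\times}}$ gives $G_{\Q}(f)\tt F^{\times}/\Q^{\times}$, and by Theorem~\ref{theo:KK*}~(\ref{enum:theoKK*1}) (equivalently by Lemma~\ref{lem:KK*}~(\ref{enum:KK*1})) the \emph{same} formula realises $K(f,p)$ as the subgroup of $G_{\Q}(f)$ corresponding to the image of $\Z_{(p)}[\theta_1]^{\times}$. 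Hence $G_{\Q}(f)/K(f,p)\cong F^{\times}/\Z_{(p)}[\theta_1]^{\times}\Q^{\times}$; and since $\theta_1\in\Z_{(p)}[\theta_1]^{\times}$, dividing $G_{\Q}(f)$ and $K(f,p)$ further by $\langle\theta_1\rangle$ leaves the quotient unchanged, which yields the displayed isomorphism $G_{\Q}(f)/K(f,p)\cong G_{\Q}^*(f)/K^*(f,p)$.

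Next I would feed $F^{\times}/\Z_{(p)}[\theta_1]^{\times}\Q^{\times}$ into Lemma~\ref{lem:F/ZpQ}, which peels off a direct factor $0$, $\Z$, or $\Z/2\Z$ according to whether $p$ is inert, split, or ramified in $F$, leaving the factor ${\mathcal O}_{(p)}^{\times}/\Z_{(p)}[\theta_1]^{\times}$. This last group is exactly what Lemma~\ref{lem:O/Zp} computes (with Lemma~\ref{lem:localring} handling the boundary cases $s=0,1$, where the factor is trivial). Combining the two lemmas, and using $\Z/2\Z\times\Z/p^{k}\Z\cong\Z/2p^{k}\Z$ for odd $p$ to normalise the ramified answers, produces the three subcases (i)--(iii) verbatim.

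For the reducible case, $D=P^2-4Q$ is a square in $\Q$, so $s$ is even. By Theorem~\ref{theo:G}~(\ref{enum:G1}) the map sending the class with initial terms $w_0,w_1$ to $(w_1-w_0\theta_1)(w_1-w_0\theta_2)^{-1}$ identifies $G_{\Q}(f)$ with $\Q^{\times}$, and by Theorem~\ref{theo:KK*}~(\ref{enum:theoKK*2}) it identifies $K(f,p)$ with $\Z_{(p)}^{\times}$ when $p\nmid D$ and with $1+p^{s/2}\Z_{(p)}$ when $p\mid D$; these identifications descend modulo $\langle\theta_1\theta_2^{-1}\rangle$, giving $G_{\Q}(f)/K(f,p)\cong G_{\Q}^*(f)/K^*(f,p)$. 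Finally I would use the $p$-adic valuation splitting $\Q^{\times}\cong\Z\times\Z_{(p)}^{\times}$ to get $\Q^{\times}/\Z_{(p)}^{\times}\cong\Z$ and $\Q^{\times}/(1+p^{s/2}\Z_{(p)})\cong\Z\times(\Z/p^{s/2}\Z)^{\times}\cong\Z\times\Z/(p-1)p^{s/2-1}\Z$ for $p\neq 2$, which is the content of part (2).

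I do not expect a genuine obstacle: once the preceding theorems and lemmas are available, the corollary is an assembly of them. The two points demanding attention are: (a) checking that the isomorphisms of Theorems~\ref{theo:G} and \ref{theo:KK*} are induced by one and the same formula in $w_0,w_1$, so that $K(f,p)$ genuinely sits inside $G_{\Q}(f)$ as the image of $\Z_{(p)}[\theta_1]^{\times}$ (respectively of $\Z_{(p)}^{\times}$ or $1+p^{s/2}\Z_{(p)}$) — this is what makes the quotient come out as $F^{\times}/\Z_{(p)}[\theta_1]^{\times}\Q^{\times}$ on the nose; and (b) the elementary but slightly delicate identification of $\Q^{\times}/(1+p^{s/2}\Z_{(p)})$ in the reducible case.
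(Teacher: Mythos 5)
Your proposal is correct and follows essentially the same route as the paper: the first isomorphisms are read off from Theorems~\ref{theo:G}, \ref{theo:G*} and \ref{theo:KK*} (with the observation that $\theta_1\in\Z_{(p)}[\theta_1]^{\times}$, since $N(\theta_1)=Q\in\Z_{(p)}^{\times}$, making the $*$-quotient agree with the plain one), the irreducible case is then finished by Lemmas~\ref{lem:F/ZpQ} and \ref{lem:O/Zp}, and the reducible case by the valuation splitting $\Q^{\times}\simeq\Z\times\Z_{(p)}^{\times}$ together with the $p$-adic logarithm identification of $(1+p\Z_{(p)})/(1+p^{s/2}\Z_{(p)})$. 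No gaps; this is the paper's own assembly of its preceding results.
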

\begin{proof}
(\ref{enum:G/K1}) The first assertion follows from Theorems~\ref{theo:G},
\ref{theo:G*} and \ref{theo:KK*},
and the others follows from Lemmas~\ref{lem:F/ZpQ} and  \ref{lem:O/Zp}.

(\ref{enum:G/K2}) We get from Theorems~\ref{theo:G},
\ref{theo:G*} and \ref{theo:KK*},
\begin{align*}
G_{\Q}(f)/K(f,p) & \simeq G_{\Q}^*(f)/K^*(f,p) \\
& \simeq
 \begin{cases}
\Q^{\times}/\Z_{(p)}^{\times} \simeq \Z &  \text{if}\ p\nmid D,\\
\Q^{\times}/(1+p^{s/2}\Z_{(p)}) & \text{if}\  p\mid D.\
\end{cases}
\end{align*}
Furthermore, if $p\ne 2$, then we have
\[
\Q^{\times} \simeq \Z \times \Z_{(p)}^{\times} \simeq \Z \times
(\Z/p\Z)^{\times}
\times (1+p\Z_{(p)}),
\]
and
$$
\begin{array}{ccccc}
(1+p\Z_{(p)})/(1+p^{s/2}\Z_{(p)}) & \simeq & (1+p\Z)/(1+p^{s/2}\Z)
& \overset{\sim}{\longrightarrow} & \Z/p^{s/2-1}\Z,\\
& & & \frac{1}{p} \mathrm{log}_p & 
\end{array}
$$
and hence we get
\[
\Q^{\times}/(1+p^{s/2}\Z_{(p)})  \simeq 
\Z \times \Z/(p-1)p^{s/2-1}\Z.
\]
\qed
\end{proof}
\section{Group structures of $K(f,p)/G(f,p)$ and $K^*(f,p)/G^*(f,p)$}\label{sec:Kp/Gp}

In this section, we determine the structure of the quotient groups $K(f,p)/G(f,p)$ and $K^*(f,p)/G^*(f,p)$,
where $K(f,p)$ and $G(f,p)$ are defined in Definition~\ref{df:GKH}, and $K^*(f,p)$ and $G^*(f,p)$ are
defined in Definition~\ref{df:GKH*}.
By the results of this section and \S~\ref{sec:G/Kp}, we get Laxton's theorem (Theorem~\ref{theo:Laxton} in \S~\ref{sec:LG}),
and a result proved by Suwa in the case $p^2|D$.
Let $p$ be a prime number with $p\nmid Q$.
From exact sequences (\ref{eq:SubG4}) and (\ref{eq:SubG8}), we get group isomorphisms
\begin{equation}\label{eq:Kp/Gp1}
K(f,p)/G(f,p) \simeq G_{\F_p}(f), \quad K^*(f,p)/G^*(f,p) \simeq G^*_{\F_p} (f).
\end{equation}
By Lemma~\ref{lem:ImPi}, we have

\begin{equation}\label{eq:Kp/Gp2}
G_{\F_p}^*(f) \simeq G_{\F_p}(f)/ {\tiny  \left\langle \left[
\begin{array}{c}
0\\
1\\
\end{array} \right] \right\rangle }.
\end{equation}
From (\ref{eq:Kp/Gp1}), (\ref{eq:Kp/Gp2}), Theorems~\ref{theo:G}, \ref{theo:G*} and Lemma~\ref{lem:orderF}, we get the following theorem.

\begin{theo}\label{theo:Gp}
\begin{enumerate}[$(1)$]
\item Assume that $f(t) \bmod{p}$ is irreducible over $\F_p$. We have
\begin{align*}
& K(f,p)/G(f,p) \simeq \F_p(\theta_1)^{\times}/\F_p^{\times} \simeq \Z/(p+1)\Z, \\
& \left[ \begin{array}{c}
w_1\\
w_0\\
\end{array} \right] \bmod{G(f,p)} 
\mapsto w_1-w_0\theta_1 \bmod{\F_p^{\times}},
\end{align*}
and 
\begin{align*}
& K^*(f,p)/G^*(f,p) \simeq \F_p(\theta_1)^{\times}/\F_p^{\times}\langle \theta_1 \rangle\simeq \Z/ ((p+1)/r(p)) \Z, \\
& \left[ \begin{array}{c}
w_1\\
w_0\\
\end{array} \right] \bmod{G^*(f,p)} 
\mapsto w_1-w_0\theta_1 \bmod{\F_p^{\times} \langle \theta_1 \rangle},
\end{align*}
\label{enum:Gp1}
\item Assume that $f(t) \bmod{p}$ is reducible over $\F_p$. 

If $p\nmid D$, then we have
\begin{align*}
& K(f,p)/G(f,p) \simeq \F_p^{\times}\simeq \Z/(p-1)\Z, \\
&  \left[ \begin{array}{c}
w_1\\
w_0\\
\end{array} \right] \bmod{G(f,p)} 
\mapsto (w_1-w_0\theta_1)(w_1-w_0\theta_2)^{-1},
\end{align*}
and 
\begin{align*}
& K^*(f,p)/G^*(f,p) \simeq \F_p^{\times}/\langle \theta_1 \theta_2^{-1} \rangle\simeq \Z/ ((p-1)/r(p)) \Z, \\
& \left[ \begin{array}{c}
w_1\\
w_0\\
\end{array} \right]  \bmod{G^*(f,p)} 
\mapsto ( w_1-w_0\theta_1)(w_1-w_0\theta_2)^{-1}  \bmod{\langle \theta_1\theta_0^{-1} \rangle}.
\end{align*}

If $p|D$, then we have
\[ K(f,p)/G(f,p) \simeq \F_p, \quad \left[ \begin{array}{c}
w_1\\
w_0\\
\end{array}  \right] \bmod{G(f,p)} \mapsto -w_0 (w_1-w_0 \theta)^{-1},
\]
where  $\theta$ is the double root of $f(t) \bmod{p}$,
and
\[
K^*(f,p)/G^*(f,p)\simeq 0.
\]
\label{enum:Gp2}
\end{enumerate}
\end{theo}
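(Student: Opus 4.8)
The plan is to reduce the statement entirely to the structure theorems already established for $G_{\F_p}(f)$ and $G^*_{\F_p}(f)$. The starting point is the pair of isomorphisms (\ref{eq:Kp/Gp1}),
\[
K(f,p)/G(f,p)\simeq G_{\F_p}(f),\qquad K^*(f,p)/G^*(f,p)\simeq G^*_{\F_p}(f),
\]
which come from the exact sequences (\ref{eq:SubG4}) and (\ref{eq:SubG8}); under them the class $\left[\begin{smallmatrix}w_1\\w_0\end{smallmatrix}\right]\bmod G(f,p)$ is sent to its reduction modulo $p$. I would then split into the three cases according to the factorization of $f(t)\bmod p$ --- irreducible, reducible with $p\nmid D$, reducible with $p\mid D$ --- which are exactly the cases of Theorems~\ref{theo:G}~(\ref{enum:G3}) and \ref{theo:G*}~(\ref{enum:G*3}). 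Substituting the explicit isomorphisms from those theorems immediately produces the displayed maps $\left[\begin{smallmatrix}w_1\\w_0\end{smallmatrix}\right]\mapsto w_1-w_0\theta_1\bmod\F_p^{\times}$ and its relatives; well-definedness on the quotient is automatic because $G(f,p)$ (resp. $G^*(f,p)$) is by construction the kernel of the composite map.

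Next I would pin down the abstract group structures by a short order computation. In the irreducible case $\F_p(\theta_1)=\F_{p^2}$, so $\F_p(\theta_1)^{\times}/\F_p^{\times}$ is cyclic of order $(p^2-1)/(p-1)=p+1$; in the reducible case with $p\nmid D$ one just has the cyclic group $\F_p^{\times}$ of order $p-1$; in the case $p\mid D$ one has the additive group $\F_p$, of order $p$. For the starred quotients I would invoke (\ref{eq:Kp/Gp2}), namely $G^*_{\F_p}(f)\simeq G_{\F_p}(f)/\langle\left[\begin{smallmatrix}0\\1\end{smallmatrix}\right]\rangle$, together with Lemma~\ref{lem:orderF}, which identifies the order of $\left[\begin{smallmatrix}0\\1\end{smallmatrix}\right]$ with the rank $r(p)$; dividing orders then gives the cyclic groups of order $(p+1)/r(p)$ and $(p-1)/r(p)$, the integrality of these fractions being a free consequence of the same lemma (the order of an element divides the group order). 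Since a quotient of a finite cyclic group is cyclic, no further argument for cyclicity is needed.

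The one place that needs genuine care is the degenerate case $p\mid D$, where $f(t)\equiv(t-\theta)^2\pmod p$. Here I would note that $\theta^2\equiv Q\not\equiv 0\pmod p$, so $\theta\in\F_p^{\times}$, and that under the isomorphism $G_{\F_p}(f)\tt\F_p$ of Theorem~\ref{theo:G}~(\ref{enum:G3ii}) the class $\left[\begin{smallmatrix}0\\1\end{smallmatrix}\right]$ goes to $-1\cdot(0-\theta)^{-1}=\theta^{-1}$, a nonzero element of the additive group $\F_p$ and hence a generator; therefore $r(p)=p$ and $G^*_{\F_p}(f)\simeq\F_p/\F_p=0$, giving $K^*(f,p)/G^*(f,p)\simeq 0$. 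I expect this last case --- checking that the double root is a unit and that its reciprocal generates the additive quotient so that the starred group collapses --- to be the only real subtlety; everything else is a substitution into the cited structure theorems followed by the elementary arithmetic $(p^2-1)/(p-1)=p+1$ and division by $r(p)=\mathrm{ord}\left[\begin{smallmatrix}0\\1\end{smallmatrix}\right]$.
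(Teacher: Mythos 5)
Your proposal is correct and follows essentially the same route as the paper, which derives the theorem precisely by combining the isomorphisms (\ref{eq:Kp/Gp1}) and (\ref{eq:Kp/Gp2}) from the exact sequences with Theorems~\ref{theo:G}, \ref{theo:G*} and Lemma~\ref{lem:orderF}. Your explicit treatment of the ramified case (checking $\theta\in\F_p^{\times}$, so $\theta^{-1}$ generates the additive group $\F_p$ and the starred quotient collapses) is a correct filling-in of a detail the paper leaves implicit.
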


Since the groups in (\ref{eq:Kp/Gp1}) are finite groups, we can see
\begin{align}
\mathrm{Tor}_{\Z}(G_{\Q}(f)/K(f,p))=H(f,p)/K(f,p), \label{eq:Kp/Gp3} \\
 \mathrm{Tor}_{\Z}(G^*_{\Q}(f)/K^*(f,p))=H^*(f,p)/K^*(f,p). \notag
\end{align}

By (\ref{eq:Kp/Gp3}), Corollary~\ref{cor:G/K} and Theorem~\ref{theo:Gp}, we see that our results lead Laxton's
(Theorem~\ref{theo:Laxton})  and Suwa's theorems.

\begin{cor}\label{cor:main}
 Let $p$ be a prime number with $p\nmid Q$, and $r(p)$ be the rank of the Lucas sequence ${\mathcal F}$.
\begin{enumerate}[$(1)$]
\item Assume $p\nmid D$. If $\mathbb Q(\theta_1)\ne \mathbb Q$ and $p$ is inert in $\mathbb Q(\theta_1)$,
 then $G_{\Q}^*(f)=H^*(f,p)=K^*(f,p)$ and
$G^*(f)/G^*(f,p)$ is a cyclic group of order $(p+1)/r(p)$.
\label{enum:cormain1}
\item Assume $p\nmid D$. If $\Q (\theta_1)=\Q$,  or  $\Q(\theta_1)\ne \Q$ and
$p$ splits in $\Q(\theta_1)$, then $G_{\Q}^*(f)/H^*(f,p)$ is an infinite cyclic group,
and $H^*(f,p)=K^*(f,p)$ and $H^*(f,p)/G^*(f,p)$ is a cyclic group of order $(p-1)/r(p)$.
\label{enum:cormain2}
\item If  $p|D$ and $p^2\nmid D$, then $G_{\Q}^*(f)=H^*(f,p)$ and $K^*(f,p)=G^*(f,p)$.
Furthermore, if $p\ne 2$, then $G_{\Q}^*(f)/G^*(f,p)$ is a cyclic group of order two.
\label{enum:cormain3}
\item
Assume $D=p^sD_0$ with $s\geq 2$ and $p\nmid D_0$.
\begin{enumerate}[$(i)$]
\item Assume $s\equiv 1 \pmod{2}$.  We have $G_{\Q} ^*(f)=H^*(f,p)$ and $K^*(f,p)=G^*(f,p)$.
Furthermore, if $p\ne 2,3 $, then $G_{\Q}^*(f)/G^*(f,p)$ is a cyclic group of order $2p^{[s/2]}$,
and if $p=3$, then $G_{\Q}^*(f)/G^*(f,p)$ is a cyclic group of order $2p^{[s/2]}$ or a direct product of
two cyclic groups of order $2p$ and $p^{[s/2]-1}$.
\label{eum:cormain4i}
\item  Assume  $s\equiv 0 \pmod{2}$.  If $\Q(\theta_1)\ne \Q$ and $p$ is inert in $\Q(\theta_1)$, then $G_{\Q}^*(f)=H^*(f,p)$ and
$K^*(f,p)=G^*(f,p)$.
Furthermore, if $p\ne 2$, then 
$G_{\Q}^*(f)/G^*(f,p)$ is a cyclic group of  order $(p+1)p^{s/2-1}$.
\label{eum:cormain4ii}
\item  Assume  $s\equiv 0 \pmod{2}$.  If $\Q(\theta_1)=\Q$ or $\Q(\theta_1)\ne \Q$, $p$ splits in $\Q(\theta_1)$, then $K^*(f,p)=G^*(f,p)$.
Furthermore, if $p\ne 2$, then $G_{\Q}^*(f)/H^*(f,p)$ is an infinite cyclic group,
$H^*(f,p)/K^*(f,p)$ is a cyclic group of  order $(p-1)p^{s/2-1}$.
\label{eum:cormain4iii}
\end{enumerate}
\label{enum:cormain4}
\end{enumerate}
\end{cor}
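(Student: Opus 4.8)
The plan is to read the corollary off the three‑step filtration $G^*(f,p)\le K^*(f,p)\le H^*(f,p)\le G_{\Q}^*(f)$ of (\ref{eq:SubG5}), combining the structure of the top quotient $G_{\Q}^*(f)/K^*(f,p)$ from Corollary~\ref{cor:G/K} with that of the bottom quotient $K^*(f,p)/G^*(f,p)$ from Theorem~\ref{theo:Gp}, and using the identification $H^*(f,p)/K^*(f,p)=\mathrm{Tor}_{\Z}\big(G_{\Q}^*(f)/K^*(f,p)\big)$ from (\ref{eq:Kp/Gp3}). By Remark~\ref{rem:LG} and Theorem~\ref{theo:main2} it is enough to prove each assertion with $G^*(f),G^*(f,p)_L,K^*(f,p)_L,H^*(f,p)_L$ replaced by $G_{\Q}^*(f),G^*(f,p),K^*(f,p),H^*(f,p)$.

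First I would record two reductions that organize the case analysis. (i) Since $\mathrm{disc}(f)\equiv D\pmod p$, the polynomial $f(t)\bmod p$ is irreducible over $\F_p$ exactly when $p\nmid D$ and $D$ is a non‑residue (equivalently, $p\nmid D$ and $p$ is inert in $F=\Q(\theta_1)$), it has two distinct roots in $\F_p$ exactly when $p\nmid D$ and $D$ is a residue, and it has a double root exactly when $p\mid D$ (here we use $p\nmid Q$). In particular, whenever $p\mid D$ the $p\mid D$ branch of Theorem~\ref{theo:Gp}(2) gives $G^*_{\F_p}(f)=0$, so by (\ref{eq:Kp/Gp1}) we get $K^*(f,p)=G^*(f,p)$; this already yields the equality $K^*(f,p)=G^*(f,p)$ in parts (3) and (4). (ii) If $s$ is odd then $D$ is not a square, so $f$ is irreducible over $\Q$ and the $p$‑part of the field discriminant of $F$ is $p$, hence $p$ ramifies in $F$; this pins down which branch of Corollary~\ref{cor:G/K}(1) is in force in part (4)(i).

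With these in hand each item is assembled from the two end quotients. For (1): $p\nmid D$ and $p$ inert, so $s=0$ and Corollary~\ref{cor:G/K}(1)(i) gives $G_{\Q}^*(f)/K^*(f,p)=0$, whence $H^*(f,p)=K^*(f,p)=G_{\Q}^*(f)$, and Theorem~\ref{theo:Gp}(1) gives $K^*(f,p)/G^*(f,p)\cong\Z/((p+1)/r(p))\Z$. For (2): $p\nmid D$ with $p$ split or $F=\Q$, so $s=0$ and Corollary~\ref{cor:G/K} gives $G_{\Q}^*(f)/K^*(f,p)\cong\Z$ (torsion free), hence $H^*(f,p)=K^*(f,p)$, $G_{\Q}^*(f)/H^*(f,p)\cong\Z$, and Theorem~\ref{theo:Gp}(2) gives $H^*(f,p)/G^*(f,p)=K^*(f,p)/G^*(f,p)\cong\Z/((p-1)/r(p))\Z$. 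For (3): $s=1$, so $f$ is irreducible over $\Q$, $p$ ramifies, Corollary~\ref{cor:G/K}(1)(iii) gives $G_{\Q}^*(f)/K^*(f,p)\cong\Z/2\Z$; being finite it equals its torsion, so $H^*(f,p)=G_{\Q}^*(f)$, while (i) gives $K^*(f,p)=G^*(f,p)$, whence $G_{\Q}^*(f)/G^*(f,p)\cong\Z/2\Z$ for $p\ne2$. For (4)(i): $s\ge3$ odd, so $K^*(f,p)=G^*(f,p)$, $f$ irreducible, $p$ ramified; Corollary~\ref{cor:G/K}(1)(iii) then produces the finite group $G_{\Q}^*(f)/K^*(f,p)$ ($\cong\Z/2p^{[s/2]}\Z$ for $p\ne2,3$, one of the two groups there for $p=3$), which equals its torsion, so $H^*(f,p)=G_{\Q}^*(f)$ and $G_{\Q}^*(f)/G^*(f,p)$ is that group. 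For (4)(ii): $s\ge2$ even, $p$ inert; $p\mid D$ gives $K^*(f,p)=G^*(f,p)$, and Corollary~\ref{cor:G/K}(1)(i) gives the finite cyclic $\Z/(p+1)p^{s/2-1}\Z$ (for $p\ne2$), which equals its torsion, forcing $H^*(f,p)=G_{\Q}^*(f)$ and $G_{\Q}^*(f)/G^*(f,p)\cong\Z/(p+1)p^{s/2-1}\Z$. For (4)(iii): $s\ge2$ even, $p$ split or $F=\Q$; $K^*(f,p)=G^*(f,p)$ by (i), and Corollary~\ref{cor:G/K} gives $G_{\Q}^*(f)/K^*(f,p)\cong\Z\times\Z/(p-1)p^{s/2-1}\Z$ (for $p\ne2$), whose torsion is the finite factor, so $G_{\Q}^*(f)/H^*(f,p)\cong\Z$ and $H^*(f,p)/K^*(f,p)\cong\Z/(p-1)p^{s/2-1}\Z$.

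The only genuinely delicate point, rather than bookkeeping, is reduction (i): one must not confuse ``$p$ inert in $F$'' with ``$f\bmod p$ irreducible over $\F_p$''. When $p^2\mid D$ these differ --- $p$ may be inert (or split) in $F$ while $f\bmod p$ acquires a double root --- and it is precisely this discrepancy that forces $G^*_{\F_p}(f)=0$ and $K^*(f,p)=G^*(f,p)$, and thereby explains why Laxton's original formulation without the hypothesis $p^2\nmid D$ fails and why Suwa's correction is needed. Tracking the parity of $s$ through Corollary~\ref{cor:G/K} (in particular that $s$ odd forces $f$ irreducible and $p$ ramified) is the other place where care is required; after that the corollary follows by gluing the two end quotients along the filtration.
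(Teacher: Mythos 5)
Your proposal is correct and follows essentially the same route as the paper, which proves the corollary precisely by combining the filtration $G^*(f,p)\le K^*(f,p)\le H^*(f,p)\le G_{\Q}^*(f)$ with Corollary~\ref{cor:G/K} for the top quotient, Theorem~\ref{theo:Gp} for the bottom quotient, and the torsion identification (\ref{eq:Kp/Gp3}) for $H^*(f,p)/K^*(f,p)$. The two bookkeeping reductions you isolate (that $p\mid D$ forces $G^*_{\F_p}(f)=0$ and hence $K^*(f,p)=G^*(f,p)$, and that odd $s$ forces $f$ irreducible with $p$ ramified) are exactly the glue the paper leaves implicit.
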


\end{document}